\titleformat{\subsection}{\it}{\thesubsection.\enspace}{1pt}{}
\newtheorem{theo}{Theorem}[section]
\newtheorem{lemm}[theo]{Lemma}
\newtheorem{prop}[theo]{Proposition}
\newtheorem{rema}[theo]{Remark}
\numberwithin{equation}{section}
\begin{document}
\title{Vanishing viscosity limit to the FENE dumbbell model of polymeric flows
\hspace{-4mm}
}

\author{ Zhaonan $\mbox{Luo}^1$ \footnote{email: 1411919168@qq.com},\quad
Wei $\mbox{Luo}^1$\footnote{E-mail:  luowei23@mail2.sysu.edu.cn} \quad and\quad
 Zhaoyang $\mbox{Yin}^{1,2}$\footnote{E-mail: mcsyzy@mail.sysu.edu.cn}\\
 $^1\mbox{Department}$ of Mathematics,
Sun Yat-sen University, Guangzhou 510275, China\\
$^2\mbox{Faculty}$ of Information Technology,\\ Macau University of Science and Technology, Macau, China}

\date{}
\maketitle
\hrule

\begin{abstract}
In this paper we mainly investigate the inviscid limit for the strong solutions of the finite extensible nonlinear elastic (FENE) dumbbell model. By virtue of the Littlewood-Paley theory, we first obtain a uniform estimate for the solution to the FENE dumbbell model with viscosity in Besov spaces. Moreover, we show that the data-to-solution map is continuous. Finally, we prove that the strong solution of the FENE dumbbell model converges to a Euler system couple with a Fokker-Planck equation. Furthermore, convergence rates in Lebesgue spaces are obtained also. \\

\vspace*{5pt}
\noindent {\it 2010 Mathematics Subject Classification}: 35Q30, 76B03, 76D05, 76D99.

\vspace*{5pt}
\noindent{\it Keywords}: The  FENE dumbbell model; Vanishing viscosity limit; Littlewood-Paley theory, Convergence rates
\end{abstract}

\vspace*{10pt}

\tableofcontents

\section{Introduction}
\par
In this paper we investigate the vanishing viscosity limit problem to the finite extensible nonlinear elastic (FENE) dumbbell model \cite{Bird1977,Doi1988}:
\begin{align}\label{1.1}
\left\{
\begin{array}{ll}
u_t+(u\cdot\nabla)u-\frac {\epsilon} {Re} \Delta{u}+\nabla{P}=\frac {1-\epsilon} {ReWe}div~\tau, ~~~~~~~div~u=0,\\[1ex]
\psi_t+(u\cdot\nabla)\psi=div_{R}[- \sigma(u)\cdot{R}\psi+\frac {1} {2WeN}\nabla_{R}\psi+\frac {1} {2We}\nabla_{R}\mathcal{U}\psi],  \\[1ex]
\tau_{ij}=\epsilon \int_{B}(R_{i}\nabla_{j}\mathcal{U})\psi dR, \\[1ex]
u|_{t=0}=u_0,~~\psi|_{t=0}=\psi_0, \\[1ex]
(\beta\nabla_{R}\psi+\nabla_{R}\mathcal{U}\psi)\cdot{n}=0 ~~~~ \text{on} ~~~~ \partial B(0,R_{0}) .\\[1ex]
\end{array}
\right.
\end{align}
In \eqref{1.1}~~$u(t,x)$ stands for the velocity of the polymeric liquid. The polymer particles are described by the distribution function $\psi(t,x,R)$. Here the polymer elongation $R$ is bounded in ball $ B=B(0,R_{0})$ which means that the extensibility of the polymers is finite and $x\in\mathbb{R}^n$.
$\tau$ is an extra-stress tensor which generated by the polymer particles effect and $P$ is the pressure. The Reynolds number $Re=\frac{\epsilon}{\nu}$ with $\nu$ is the viscosity of the fluid, the viscosity ratio between the polymer and the solvent $\epsilon\in(0,1)$ and the density $\rho=\int_B\psi dR$. Moreover the potential $\mathcal{U}(R)=-k\log(1-(\frac{|R|}{|R_{0}|})^{2})$ for some $k>0$. $\sigma(u)$ is the drag term. In general, $\sigma(u)=\nabla u$. For the co-rotation case, $\sigma(u)=\frac{\nabla u-(\nabla u)^{T}}{2}$.

For simplicity, we assume that $\beta=1$, $We=1$, $N=1$, $R_{0}=1$ and $\nu=\frac {\epsilon} {Re}\in[0,1]$.
Notice that $(u,\psi)=(0,\psi_{\infty})$ with $$\psi_{\infty}(R)=\frac{e^{-\mathcal{U}(R)}}{\int_{B}e^{-\mathcal{U}(R)}dR}=\frac{(1-|R|^2)^k}{\int_{B}(1-|R|^2)^kdR},$$
is a trivial solution of \eqref{1.1}.

By a simple calculation, we can rewrite \eqref{1.1} for the following system:
\begin{align}\label{eq1}
\left\{
\begin{array}{ll}
u_t+(u\cdot\nabla)u-\frac {\epsilon} {Re}\Delta u+\nabla{P}=\frac {1-\epsilon} {Re}div~\tau,  ~~~~~~~div~u=0,\\[1ex]
\psi_t+(u\cdot\nabla)\psi=div_{R}[-\sigma(u)\cdot{R}\psi+\frac {1} {2}\psi_{\infty}\nabla_{R}\frac{\psi}{\psi_{\infty}}],  \\[1ex]
\tau_{ij}=\epsilon\int_{B}(R_{i}\nabla_{R_j}\mathcal{U})\psi dR, \\[1ex]
u|_{t=0}=u_0,~~ \psi|_{t=0}=\psi_0, \\[1ex]
\psi_{\infty}\nabla_{R}\frac{\psi}{\psi_{\infty}}\cdot{n}=0 ~~~~ \text{on} ~~~~ \partial B(0,1) .\\[1ex]
\end{array}
\right.
\end{align}

{\bf Remark.} As in the reference \cite{Masmoudi2013}, one can deduce that $\psi=0$ on $\partial B(0,1)$.

There are a lot of mathematical results about the dumbbell model. M. Renardy \cite{Renardy} established the local well-posedness in Sobolev spaces with potential $\mathcal{U}(R)=(1-|R|^2)^{1-\sigma}$ for $\sigma>1$. Later, B. Jourdain, T. Leli\`{e}vre, and
C. Le Bris \cite{Jourdain} proved local existence of a stochastic differential equation with potential $\mathcal{U}(R)=-k\log(1-|R|^{2})$ in the case $k>3$ for a Couette flow. H. Zhang and P. Zhang \cite{Zhang-H} proved local well-posedness of \eqref{eq1} with $d=3$ in weighted Sobolev spaces. For the co-rotation case, F. Lin, P. Zhang, and Z. Zhang \cite{F.Lin} obtain a global existence results with $d=2$ and $k > 6$. If the initial data is perturbation around equilibrium, N. Masmoudi \cite{Masmoudi2008} proved global well-posedness of \eqref{eq1} for $k>0$. In the co-rotation case with $d=2$, he \cite{Masmoudi2008} obtained a global result for $k>0$ without any small conditions. W. Luo and Z. Yin improved the result to Besov spaces \cite{Luo-Yin-NA}. In the co-rotation case, A. V. Busuioc, I. S. Ciuperca, D. Iftimie and L. I. Palade \cite{Busuioc} obtain a global existence result with only the small condition on $\psi_0$. The global existence of weak solutions in $L^2$ was proved recently by N. Masmoudi \cite{Masmoudi2013} under some entropy conditions.

Recently, M. Schonbek \cite{Schonbek} studied the $L^2$ decay of the velocity for the co-rotation
FENE dumbbell model, and obtained the
decay rate $(1+t)^{-\frac{d}{4}+\frac{1}{2}}$ with $d\geq 2$ and $u_0\in L^1$.
Moreover, she conjectured that the sharp decay rate should be $(1+t)^{-\frac{d}{4}}$,~$d\geq 2$.
However, she failed to get it because she could not use the bootstrap argument as in \cite{Schonbek1985} due to the
additional stress tensor.  More recently, W. Luo and Z. Yin \cite{Luo-Yin,Luo-Yin2} improved the decay rate to $(1+t)^{-\frac{d}{4}}$ with $d\geq 2$.

The vanishing viscosity limit problem of the incompressible Navier-Stokes equation has been studied widely. The first result was obtained by H. Swann\cite{Swann} with $u_0\in H^s$ for $s>3$. T. Kato \cite{Kato} proved Swann's result by a simple method. They prove the strong solutions are convergence in a weaker topology $H^{s'}$ with $s'<s$. A. Majda\cite{Majda} prove the convergence in $L^2$ space with $u_0\in L^2$.  N. Masmoudi \cite{Masmoudi-2007CMP} improve the convergence in $H^s$ with $u_0\in H^s$. Recently, Z. Guo, J. Li and Z. Yin \cite{Guo-Li-Yin} generalized the previous result in Besov spaces. F. Bernicot, T. Elgindi and S. Keraani \cite{Bernicot} prove the global convergence with vorticity belonging to BMO-type spaces in the case $d=2$.

To our best knowledge, there is no result about the vanishing viscosity limit to \eqref{eq1}.  In this paper, we investigate this problem.
Firstly, we investigate the local well-posedness of \eqref{eq1} in $B^s_{p,r}\times B^{s-1}_{p,r}(\mathcal{L}^p)$ by using approximate argument. The key point is to prove some a priori estimates for the linear problem of \eqref{eq1} by transport-diffusion equations theory and the Littlewood-Paley theory. The most difficult term is the additional stress tensor $\tau$ with low regularity. According to transport-diffusion equations theory, we must estimate $\tau$ in $\tilde{L}^{\infty}_{T}(B^{s-1}_{p,r})$ to get uniform bound for velocity $u$ in $\tilde{L}^{\infty}_{T}(B^{s}_{p,r})$ with respect to $\nu$. For this purpose, we have to estimate $\psi$ or $\nabla_{R} \psi$ in $\tilde{L}^{\infty}_{T}(B^{s-1}_{p,r}(\mathcal{L}^p))$. However, we can't get any information of $\nabla_{R} \psi~in~L^{\infty}_{T}$ from the Fokker-Planck equation. Thus we have to estimate $\psi$ and add an additional condition $k(p-1)>1$ to prove $\tau~\in~\tilde{L}^{\infty}_{T}(B^{s-1}_{p,r})$ by using the useful lemma in \cite{Masmoudi2008}.
Next, we study uniform continuous dependence for $\nu$ and the inviscid limit in the same topology $B^s_{p,r}\times B^{s-1}_{p,r}(\mathcal{L}^p)$.
The proof is based on the Bona-Smith method but in the $L^p$ setting. The main idea is to smooth out the initial data by the frequency localization operator $S_N$ defined in Section 2 and to apply the uniform bound for the higher regularity solution of \eqref{eq1} with the smooth initial data.
Finally, we prove the rates of convergence in $L^p$ by transport equations theory and interpolation theory. In order to get convergence rates, we estimate the convergence of velocity
in $B^{s-2}_{p,r}$ since we have uniform bound for $\Delta u$ and $div~\tau$ in this space. Using interpolation theory, we discovery that convergence rates in $L^p$ depend on regularity.

The paper is organized as follows. In Section 2, we introduce some notations and  our main results. In Section 3, we state some lemmas which will be useful in this paper. In Section 4, we consider the linear problem of \eqref{eq1} and prove some priori estimates for solutions to \eqref{eq1}. In Section 5, we prove the local well-posedness of \eqref{eq1} in $B^s_{p,r}\times B^{s-1}_{p,r}(\mathcal{L}^p)$ by using approximate argument. In Section 6, we present uniform continuous dependence for $\nu$ and then prove
that the strong solution of the FENE dumbbell model converges to a Euler system couple with a
Fokker-Planck equation. In Section 7, we prove the rates of convergence in $L^p$ by transport equations theory and interpolation theory.

\section{Notations and main results}
\par
In this section we will introduce main results and some notations in \cite{Bahouri2011,Luo-Yin-NA} that we shall use throughout the paper.

For $p\geq1$, we denote by $\mathcal{L}^{p}$ the space
$$\mathcal{L}^{p}=\big\{\psi \big|\|\psi\|^{p}_{\mathcal{L}^{p}}=\int_{B} \psi_{\infty}|\frac{\psi}{\psi_{\infty}}|^{p}dR<\infty\big\}.$$

  We will use the notation $L^{p}_{x}(\mathcal{L}^{q})$ to denote $L^{p}(\mathbb{R}^{d};\mathcal{L}^{q}):$
$$L^{p}_{x}(\mathcal{L}^{q})=\big\{\psi \big|\|\psi\|_{L^{p}_{x}(\mathcal{L}^{q})}=(\int_{\mathbb{R}^{d}}(\int_{B} \psi_{\infty}|\frac{\psi}{\psi_{\infty}}|^{q}dR)^{\frac{p}{q}}dx)^{\frac{1}{p}}<\infty\big\}.$$

Let $\mathcal{C}$ be the annulus $\{\xi\in\mathbb{R}^d:\frac 3 4\leq|\xi|\leq\frac 8 3\}$. There exist radial functions $\chi$ and $\varphi$, valued in the interval $[0,1]$, belonging respectively to $\mathcal{D}(B(0,\frac 4 3))$ and $\mathcal{D}(\mathcal{C})$, and such that
$$ \forall\xi\in\mathbb{R}^d,\ \chi(\xi)+\sum_{j\geq 0}\varphi(2^{-j}\xi)=1, $$
$$ \forall\xi\in\mathbb{R}^d\backslash\{0\},\ \sum_{j\in\mathbb{Z}}\varphi(2^{-j}\xi)=1, $$
$$ |j-j'|\geq 2\Rightarrow\mathrm{Supp}\ \varphi(2^{-j}\cdot)\cap \mathrm{Supp}\ \varphi(2^{-j'}\cdot)=\emptyset, $$
$$ j\geq 1\Rightarrow\mathrm{Supp}\ \chi(\cdot)\cap \mathrm{Supp}\ \varphi(2^{-j}\cdot)=\emptyset. $$
The set $\widetilde{\mathcal{C}}=B(0,\frac 2 3)+\mathcal{C}$ is an annulus, and we have
$$ |j-j'|\geq 5\Rightarrow 2^{j}\mathcal{C}\cap 2^{j'}\widetilde{\mathcal{C}}=\emptyset. $$
Further, we have
$$ \forall\xi\in\mathbb{R}^d,\ \frac 1 2\leq\chi^2(\xi)+\sum_{j\geq 0}\varphi^2(2^{-j}\xi)\leq 1, $$
$$ \forall\xi\in\mathbb{R}^d\backslash\{0\},\ \frac 1 2\leq\sum_{j\in\mathbb{Z}}\varphi^2(2^{-j}\xi)\leq 1. $$

$\mathcal{F}$ represents the Fourier transform and  its inverse is denoted by $\mathcal{F}^{-1}$.
Let $u$ be a tempered distribution in $\mathcal{S}'(\mathbb{R}^d)$. For all $j\in\mathbb{Z}$, define
$$
\Delta_j u=0\,\ \text{if}\,\ j\leq -2,\quad
\Delta_{-1} u=\mathcal{F}^{-1}(\chi\mathcal{F}u),\quad
\Delta_j u=\mathcal{F}^{-1}(\varphi(2^{-j}\cdot)\mathcal{F}u)\,\ \text{if}\,\ j\geq 0,\quad
S_j u=\sum_{j'<j}\Delta_{j'}u.
$$
Then the Littlewood-Paley decomposition is given as follows:
$$ u=\sum_{j\in\mathbb{Z}}\Delta_j u \quad \text{in}\ \mathcal{S}'(\mathbb{R}^d). $$

Let $s\in\mathbb{R},\ 1\leq p,r\leq\infty.$ The nonhomogeneous Besov space $B^s_{p,r}$ and $B^s_{p,r}(\mathcal{L}^q)$ is defined by
$$ B^s_{p,r}=\{u\in S':\|u\|_{B^s_{p,r}}=\Big\|(2^{js}\|\Delta_j u\|_{L^p})_j \Big\|_{l^r(\mathbb{Z})}<\infty\}, $$
$$ B^s_{p,r}(\mathcal{L}^q)=\{\phi\in S':\|\phi\|_{B^s_{p,r}(\mathcal{L}^q)}=\Big\|(2^{js}\|\Delta_j \phi\|_{L_{x}^{p}(\mathcal{L}^q)})_j \Big\|_{l^r(\mathbb{Z})}<\infty\}.$$
We denote $C_T(B^s_{p,r})$ by $C([0,T];B^s_{p,r})$ and $L^{\rho}_{T}(B^s_{p,r})$ by $L^{\rho}([0,T];B^s_{p,r})$ respectively. Moreover we will use the spaces
$\tilde{L}^{\rho}_{T}(B^s_{p,r})$ and $\tilde{L}^{\rho}_{T}(B^s_{p,r}(\mathcal{L}^q))$
$$ \tilde{L}^{\rho}_{T}(B^s_{p,r})=\{u\in S':\|u\|_{\tilde{L}^{\rho}_{T}(B^s_{p,r})}=\Big\|(2^{js}\|\Delta_j u\|_{L_{T}^{\rho}(L^p)})_j \Big\|_{l^r(\mathbb{Z})}<\infty\}, $$
$$ \tilde{L}^{\rho}_{T}(B^s_{p,r}(\mathcal{L}^q))=\{\phi\in S':\|\phi\|_{\tilde{L}^{\rho}_{T}(B^s_{p,r}(\mathcal{L}^q))}=\Big\|(2^{js}\|\Delta_j \phi\|_{L_{T}^{\rho}(L_{x}^{p}(\mathcal{L}^q))})_j \Big\|_{l^r(\mathbb{Z})}<\infty\}.$$
The fact $L^{1}_{T}(B^s_{p,r})\hookrightarrow\tilde{L}^{1}_{T}(B^s_{p,r})$, ~ $L^{1}_{T}(B^s_{p,r}(\mathcal{L}^q))\hookrightarrow\tilde{L}^{1}_{T}(B^s_{p,r}(\mathcal{L}^q))$, ~$\tilde{L}^{\infty}_{T}(B^s_{p,r})\hookrightarrow L^{\infty}_{T}(B^s_{p,r})$ and $\tilde{L}^{\infty}_{T}(B^s_{p,r}(\mathcal{L}^q))\hookrightarrow L^{\infty}_{T}(B^s_{p,r}(\mathcal{L}^q))$ will be useful in this paper.  \\
Next we define a special space $\tilde{E}^s_{p,r}$:
$$ \tilde{E}^s_{p,r}(T)=\{\psi:\|\psi\|_{\tilde{E}^s_{p,r}(T)}=\Big\|\Big(2^{js}(\int_{0}^{T}\int_{\mathbb{R}^{d}\times B} \psi_{\infty}|\nabla_{R}(\frac{\Delta_j\psi}{\psi_{\infty}})^{\frac{p}{2}}|^{2}dRdxdt)^{\frac{1}{p}}\Big)_j \Big\|_{l^r(\mathbb{Z})}<\infty\}.$$

The main results of this paper can be stated as follows.

\begin{theo}\label{th1}
Let $d\geq 2$. Assume that $\nu=\frac {\epsilon} {Re} \in [0,1]$, $s>1+max\{\frac 1 2, \frac d p\}$, $2\leq p\leq r<\infty$ and $k(p-1)>1$. Then for any $K>0$, $(u_0,\psi_0)\in B_K=\{(v,\phi)\in B^s_{p,r}\times B^{s-1}_{p,r}(\mathcal{L}^p):\|v\|_{B^s_{p,r}}+\|\phi\|_{B^{s-1}_{p,r}(\mathcal{L}^p)}\leq K, div~v=0\}$, there exists $T=T(K,s,p,r,d)>0$ such that the equation \eqref{eq1} has a unique solution $H^{\nu}(u_0,\psi_0)=(u^{\nu},\psi^{\nu})\in C_T(B^s_{p,r})\times C_T( B^{s-1}_{p,r}(\mathcal{L}^p))$. Moreover, $\psi^{\nu}\in \tilde{E}^{s-1}_{p,r}(T)$ and we have   \\

(1) (Uniform bounds): there exists $C=C(K,s,p,r,d)>0$ such that
\begin{align}
\|H^{\nu}(u_0,\psi_0)\|_{\tilde{L}_T^{\infty}(B^s_{p,r})\times\tilde{L}_T^{\infty}(B^{s-1}_{p,r}(\mathcal{L}^p))}\leq C, \quad \forall\nu\in[0,1].
\end{align}
Moreover, if $(u_0,\psi_0)\in B^{\gamma}_{p,r}\times B^{\gamma-1}_{p,r}(\mathcal{L}^p)$ for any $\gamma>s$, then there exists $C_1=C_1(K,\gamma,s,p,r,d)>0$ such that
\begin{align}
\|H^{\nu}(u_0,\psi_0)\|_{\tilde{L}_T^{\infty}(B^{\gamma}_{p,r})\times\tilde{L}_T^{\infty}(B^{\gamma-1}_{p,r}(\mathcal{L}^p))}\leq C_1(\|u_{0}\|_{B^{\gamma}_{p,r}}+\|\psi_{0}\|_{B^{\gamma-1}_{p,r}(\mathcal{L}^p)}), \quad \forall\nu\in[0,1].
\end{align}

(2) (Uniform continuous dependence for $\nu$): the solution map $(u_0,\psi_0)\rightarrow (u^{\nu},\psi^{\nu})$ is continuous from $B_K$ to $C_T(B^s_{p,r})\times C_T( B^{s-1}_{p,r}(\mathcal{L}^p))$ uniformly with respect to $\nu$. Namely, for any $\eta>0$, there exists a $\delta=\delta(u_{0},\psi_{0},K,s,p,r,d)>0$ such that for any $(v_0,\phi_0)\in B_K$ with $\|v_0-u_{0}\|_{B^{s}_{p,r}}+\|\phi_0-\psi_{0}\|_{B^{s-1}_{p,r}(\mathcal{L}^p)}<\delta$, we have
\begin{align}
\|H^{\nu}(u_0,\psi_0)-H^{\nu}(v_0,\phi_0)\|_{\tilde{L}_T^{\infty}(B^{s}_{p,r})\times\tilde{L}_T^{\infty}(B^{s-1}_{p,r}(\mathcal{L}^p))}<\eta, \quad \forall\nu\in[0,1].
\end{align}

(3) (Inviscid limit): we deduce that
\begin{align}
\lim_{\nu\rightarrow 0}
\|H^{\nu}(u_0,\psi_0)-H^{0}(u_0,\psi_0)\|_{\tilde{L}_T^{\infty}(B^{s}_{p,r})\times\tilde{L}_T^{\infty}(B^{s-1}_{p,r}(\mathcal{L}^p))}=0.
\end{align}

\end{theo}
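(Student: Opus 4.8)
The plan is to prove the three assertions in the order they are stated, since each builds on the previous one. The backbone of everything is a priori estimates for the linearized version of \eqref{eq1}: a transport-diffusion equation for $u$ with forcing $\frac{1-\ep}{Re}\,\mathrm{div}\,\tau$, coupled to a Fokker-Planck equation for $\psi$. First I would set up the linear theory in Besov spaces, using the standard transport-diffusion estimates (which give bounds \emph{uniform in the diffusion parameter} $\nu\in[0,1]$, this being the crucial point) together with the parabolic smoothing for the Fokker-Planck operator $\frac12\,\psi_\infty\nabla_R(\cdot/\psi_\infty)$, which yields control of $\psi$ in $\tilde E^{s-1}_{p,r}(T)$. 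The single genuinely delicate ingredient is the estimate of the stress tensor $\tau_{ij}=\ep\int_B R_i\nabla_{R_j}\mathcal U\,\psi\,dR$ in $\tilde L^\infty_T(B^{s-1}_{p,r})$: because $\nabla_R\mathcal U$ blows up like $(1-|R|^2)^{-1}$ near $\partial B$, one cannot simply bound it by $\|\psi\|$; instead one invokes the weighted lemma from \cite{Masmoudi2008}, which requires exactly the hypothesis $k(p-1)>1$ so that $\|\tau\|_{L^p_x}\lesssim\|\psi\|_{L^p_x(\mathcal L^p)}$ at the level of each dyadic block. Closing these estimates in a fixed point / successive-approximation scheme (smoothing the initial data, solving the linear problems, passing to the limit) gives existence, uniqueness, and the uniform bound (2.1); the higher-regularity bound (2.2) for data in $B^\gamma_{p,r}\times B^{\gamma-1}_{p,r}(\mathcal L^p)$ follows by running the same estimates at regularity $\gamma$, the time $T$ being unchanged because it was determined only by the $B^s_{p,r}$-size $K$.

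For part (2), continuous dependence uniform in $\nu$, I would use the Bona--Smith argument adapted to the $L^p$ setting as announced in the introduction. Given two data $(u_0,\psi_0)$ and $(v_0,\phi_0)$ in $B_K$, write $S_N$ for the low-frequency cutoff and compare: the smoothed solutions $H^\nu(S_Nu_0,S_N\psi_0)$ and $H^\nu(S_Nv_0,S_N\phi_0)$ are controlled in high norm $B^{s+1}_{p,r}$ (with a bound that grows in $N$) by (2.2), while the difference of the original solution and its smoothed version is estimated in the \emph{lower} norm $B^{s-1}_{p,r}$, where the equation for the difference is essentially a linear transport-diffusion problem and the uniform bounds from part (1) furnish the coefficients. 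Interpolating between the high-norm bound and the low-norm smallness, and choosing $N$ large then $\delta$ small, one gets the desired estimate, with every constant independent of $\nu$ because every estimate used was. The point to be careful about is that the difference equations again contain a $\mathrm{div}\,\tau$ term, so one needs the commutator/product estimates for $\tau$ applied to the difference $\psi^\nu-\phi^\nu$, again consuming $k(p-1)>1$.

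For part (3), the inviscid limit, I would compare $H^\nu(u_0,\psi_0)$ with $H^0(u_0,\psi_0)$ directly. Let $(\delta u,\delta\psi)=(u^\nu-u^0,\psi^\nu-\psi^0)$; subtracting the two systems, $\delta u$ solves a transport-diffusion equation whose forcing is $\nu\Delta u^0$ plus $\frac{1-\ep}{Re}\,\mathrm{div}(\tau^\nu-\tau^0)$ plus transport remainders, and $\delta\psi$ solves a Fokker-Planck equation driven by $\delta u$. The term $\nu\Delta u^0$ is $O(\nu)$ in $\tilde L^\infty_T(B^{s-2}_{p,r})$ by the uniform bound, so one first closes the estimate for $(\delta u,\delta\psi)$ in the lower-regularity space $B^{s-2}_{p,r}\times B^{s-3}_{p,r}(\mathcal L^p)$ (or $B^{s-1},B^{s-2}$ — whatever loses one derivative and still keeps the coefficients controlled), obtaining $\|(\delta u,\delta\psi)\|\lesssim\nu\to0$ in that weaker topology; then, since by part (1) both $H^\nu$ and $H^0$ are bounded in $\tilde L^\infty_T(B^\gamma_{p,r}\times B^{\gamma-1}_{p,r}(\mathcal L^p))$ for $\gamma$ slightly above $s$ when the data are smooth — and for general $B^s$ data one instead uses the Bona--Smith smoothing from part (2) to reduce to that case — an interpolation between the $O(\nu)$ control in low norm and the uniform bound in high norm gives convergence to $0$ in the full $B^s_{p,r}\times B^{s-1}_{p,r}(\mathcal L^p)$ norm as $\nu\to0$. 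The main obstacle throughout, and the reason the argument is not just a transcription of the Navier--Stokes vanishing-viscosity proofs, is handling the low-regularity, singularly-weighted stress tensor $\tau$ uniformly in $\nu$: all three parts hinge on the $\tilde L^\infty_T(B^{s-1}_{p,r})$ bound for $\tau$ in terms of $\psi$ in $\mathcal L^p$-valued Besov norms, which is exactly where the hypothesis $k(p-1)>1$ and the technical lemma of \cite{Masmoudi2008} are indispensable.
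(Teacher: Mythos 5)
Your proposal follows essentially the same route as the paper: a priori estimates for the linearized transport-diffusion/Fokker-Planck system that are uniform in $\nu\in[0,1]$, the Masmoudi weighted inequality (requiring $k(p-1)>1$) to bound $\mathrm{div}\,\tau$ dyadically by $\|\psi\|_{B^{s-1}_{p,r}(\mathcal L^p)}$, successive approximations with smoothed initial data for existence and the uniform bounds (1), and the Bona--Smith scheme in the $L^p$ setting for both (2) and (3), exploiting the $\nu$-independent bound $\|H^\nu(S_N u_0,S_N\psi_0)\|_{B^{s+1}\times B^s(\mathcal L^p)}\lesssim 2^N$ together with the low-norm Lipschitz estimate. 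One small point of emphasis: for part (2) the crucial step is not a plain interpolation of the difference $u-u_N$ between $B^{s-1}$ and $B^{s+1}$ (which is unavailable since $u$ itself is only in $B^s$), but the commutator/product estimate $\|\omega_N\nabla u_N\|_{B^s}\lesssim\|\omega_N\|_{B^{s-1}}\|u_N\|_{B^{s+1}}+\|\omega_N\|_{B^s}\|u_N\|_{B^s}$, whose first term is $O(2^{-N}\cdot 2^N)$ after the low-norm Lipschitz bound; interpolation enters only to compare the two smoothed solutions $H^\nu(S_Nu_0,S_N\psi_0)$ and $H^\nu(S_Nv_0,S_N\phi_0)$, exactly as the paper does.
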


\begin{rema}\label{remark}
Since we can't get any information of $\nabla_{R} \psi~in~L^{\infty}_{T}$ from the Fokker-Planck equation, we need not to prove $\psi\in \tilde{E}^{s-1}_{p,r}(T)$. Thus, we can prove Theorem \ref{th1} by replacing the assumption $2\leq p\leq r<\infty$ with the assumption $2\leq p,~r<\infty$.
\end{rema}

\begin{theo}\label{th2}
Under the assumption of Theorem \ref{th1}, we deduce that the rates of convergence in $L^p$ for $t\in[0,T]$: if $s>2$, then
$$\|u^\nu-u^0\|_{L^p}\leq C\nu,~~\|\psi^\nu-\psi^0\|_{L_x^p(\mathcal{L}^p)}\leq C\nu^{\frac 1 2}.$$
If $s=2$, for any small $\epsilon_1\in(0,1)$, we have
$$\|u^\nu-u^0\|_{L^p}\leq C\nu^{1-\epsilon_1},~~\|\psi^\nu-\psi^0\|_{L_x^p(\mathcal{L}^p)}\leq C\nu^{\frac {1-\epsilon_1} {2}}.$$
If $s<2$, we get
$$\|u^\nu-u^0\|_{L^p}\leq C\nu^{\frac s 2},~~\|\psi^\nu-\psi^0\|_{L_x^p(\mathcal{L}^p)}\leq C\nu^{\frac {s-1} {2}}.$$
\end{theo}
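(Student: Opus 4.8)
The plan is to derive the convergence rates by working with the difference of the two systems (the viscous FENE system at viscosity $\nu$ and the inviscid one at $\nu=0$) and propagating the estimates in a Besov space two derivatives below the solution space, where we already have uniform bounds for $\Delta u^\nu$ and $\mathrm{div}\,\tau^\nu$ coming from Theorem \ref{th1}. Write $\bar u=u^\nu-u^0$, $\bar\psi=\psi^\nu-\psi^0$, $\bar\tau=\tau^\nu-\tau^0$. Subtracting the two momentum equations gives a transport equation for $\bar u$ with source terms $\nu\Delta u^\nu$ (the only place $\nu$ enters explicitly), $-(\bar u\cdot\nabla)u^0$, $-\frac{1-\epsilon}{Re}\,\mathrm{div}\,\bar\tau$, and a pressure term; the $\nu$-diffusion $\nu\Delta\bar u$ sits on the left and is favorable. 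Subtracting the two Fokker--Planck equations gives a transport-diffusion equation for $\bar\psi$ with source terms coming from the difference of the drag terms $\sigma(u^\nu)\cdot R\,\psi^\nu-\sigma(u^0)\cdot R\,\psi^0$ and the transport defect $(\bar u\cdot\nabla)\psi^0$. The first step is to estimate $\|\bar u\|_{\widetilde L^\infty_T(B^{s-2}_{p,r})}$ and $\|\bar\psi\|_{\widetilde L^\infty_T(B^{s-2}_{p,r}(\mathcal L^p))}$: apply the transport (and transport-diffusion) a priori estimates from Section 3, use the product/commutator estimates to control the source terms by $\|\bar u\|_{B^{s-2}_{p,r}}+\|\bar\psi\|_{B^{s-2}_{p,r}(\mathcal L^p)}$ times the (uniformly bounded) high norms of $(u^\nu,\psi^\nu,u^0,\psi^0)$, plus the forcing $\nu\|\Delta u^\nu\|_{\widetilde L^\infty_T(B^{s-2}_{p,r})}\lesssim\nu\|u^\nu\|_{\widetilde L^\infty_T(B^s_{p,r})}\lesssim\nu$, and then close by Gronwall. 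This yields
\begin{align}
\|\bar u\|_{\widetilde L^\infty_T(B^{s-2}_{p,r})}+\|\bar\psi\|_{\widetilde L^\infty_T(B^{s-2}_{p,r}(\mathcal L^p))}\leq C\nu. \nonumber
\end{align}

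The second step is interpolation. We know from Theorem \ref{th1} that $\bar u$ is uniformly bounded in $\widetilde L^\infty_T(B^s_{p,r})$ and $\bar\psi$ in $\widetilde L^\infty_T(B^{s-1}_{p,r}(\mathcal L^p))$. Interpolating between $B^{s-2}_{p,r}$ (rate $\nu$) and $B^s_{p,r}$ (rate $O(1)$), and using the embedding $B^0_{p,1}\hookrightarrow L^p$ together with $B^{s'}_{p,r}\hookrightarrow B^0_{p,1}$ when $s'>0$ (and a logarithmic interpolation when $s'=0$), one obtains the stated $L^p$ rates for $\bar u$: if $s>2$ then $B^0_{p,r}$ is strictly between the two endpoints and the rate is the full $\nu$; if $s=2$ the target $B^0$ coincides with the low endpoint up to the failure of $B^0_{p,r}\hookrightarrow L^p$, which forces the loss $\nu^{1-\epsilon_1}$ via interpolation with a slightly higher index; if $1<s<2$ then $L^p$ (morally $B^0$) lies a fraction $\theta=s/2$ of the way from $B^{s-2}$ up to $B^s$, giving $\nu^{s/2}$. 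For $\bar\psi$ the same scheme applied between $B^{s-2}_{p,r}(\mathcal L^p)$ (rate $\nu$) and $B^{s-1}_{p,r}(\mathcal L^p)$ (rate $O(1)$) puts $L^p_x(\mathcal L^p)$ at interpolation parameter $\tfrac12$ in the generic case, producing the square-root rate $\nu^{1/2}$, and $\nu^{(s-1)/2}$ in the low-regularity range, with the $\epsilon_1$-loss at $s=2$ exactly as for $u$.

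The main obstacle I expect is the first step, specifically closing the estimate for $\bar\psi$ and handling the drag term $\sigma(u^\nu)\cdot R\,\psi^\nu-\sigma(u^0)\cdot R\,\psi^0=\sigma(\bar u)\cdot R\,\psi^\nu+\sigma(u^0)\cdot R\,\bar\psi$ at the regularity level $B^{s-2}_{p,r}(\mathcal L^p)$: the factor $\sigma(\bar u)=\nabla\bar u$ (or its antisymmetric part) costs a derivative, so $\nabla\bar u$ only lives in $B^{s-3}_{p,r}$, which is below what the Fokker--Planck transport-diffusion estimate can absorb unless one exploits the extra integrability/decay in $R$ near $\partial B$ (the weight $\psi_\infty$ and the condition $k(p-1)>1$) together with the $R$-multiplication and the divergence structure $\mathrm{div}_R$, integrating by parts in $R$ to trade the $R$-derivative for the $\nabla_R\mathcal U$ growth controlled as in \cite{Masmoudi2008}. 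Equivalently one measures $\bar u$ in $B^{s-1}_{p,r}$ for the drag term while keeping $B^{s-2}_{p,r}$ for the momentum forcing; reconciling these two levels (and making sure the coupling through $\mathrm{div}\,\bar\tau$, which gains a derivative back, is still consistent) is the delicate bookkeeping. The pressure term is removed as usual by the Leray projector, and all commutator terms are standard once the functional setting is fixed.
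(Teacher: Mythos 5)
Your overall blueprint is the same as the paper's: work with the difference $(\omega^\nu,\Omega^\nu)=(u^\nu-u^0,\psi^\nu-\psi^0)$, estimate it in $B^{s-2}_{p,r}$ via the transport and Fokker--Planck a priori estimates of Section~4 with the single explicit $\nu$-forcing $\nu\Delta u^\nu$, close by Gronwall, then pass to $L^p$ by embedding/interpolation between the low-regularity difference estimate and the uniform high-regularity bound. So the approach is correct in spirit. But there is a genuine gap in Step~1, which you in fact anticipate in your last paragraph but do not resolve, and which also makes your Step~2 interpolation internally inconsistent.

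Concretely: you assert $\|\Omega^\nu\|_{\widetilde L^\infty_T(B^{s-2}_{p,r}(\mathcal L^p))}\le C\nu$. This cannot be obtained (and is not what the theorem predicts: if $s>2$ the embedding $B^{s-2}_{p,r}(\mathcal L^p)\hookrightarrow L^p_x(\mathcal L^p)$ would then give $\|\Omega^\nu\|_{L^p_x(\mathcal L^p)}\le C\nu$, contradicting the stated rate $\nu^{1/2}$). The obstruction is exactly the one you name: the drag term contributes $\sigma(\omega^\nu)R\psi^0$, and placing this in $B^{s-2}_{p,r}(\mathcal L^p)$ via Lemma~\ref{product} requires $\nabla\omega^\nu\in B^{s-2}_{p,r}$, i.e.\ $\omega^\nu\in B^{s-1}_{p,r}$, one level above where your Step~1 bound for $\omega^\nu$ lives. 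The paper's resolution is precisely the ``two-level'' bookkeeping you gesture at but do not carry out: first obtain $\|\omega^\nu\|_{L^\infty_T(B^{s-2}_{p,r})}\le C\nu$ from the momentum equation alone, then interpolate with the uniform bound in $B^s_{p,r}$ to get $\|\omega^\nu\|_{L^\infty_T(B^{s-1}_{p,r})}\le C\nu^{1/2}$, and only then feed this into the Fokker--Planck estimate for $\Omega^\nu$, which yields $\|\Omega^\nu\|_{L^\infty_T(B^{s-2}_{p,r}(\mathcal L^p))}\le C\nu^{1/2}$. The loss of half a power of $\nu$ for $\psi$ relative to $u$ is not an interpolation artifact at the $L^p$ stage (your ``parameter $\tfrac12$'' computation does not apply: for $s>2$, $L^p$ is below $B^{s-2}$, not between $B^{s-2}$ and $B^{s-1}$); it is already present in the $B^{s-2}$ estimate for $\Omega^\nu$ because $\omega^\nu$ must be measured one derivative higher there. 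Once you replace your Step~1 claim by the correct rates $\nu$ for $\omega^\nu$ in $B^{s-2}_{p,r}$ and $\nu^{1/2}$ for $\Omega^\nu$ in $B^{s-2}_{p,r}(\mathcal L^p)$, your Step~2 interpolation (with $B^s_{p,r}$ for $u$ and $B^{s-1}_{p,r}(\mathcal L^p)$ for $\psi$ as the high endpoints) does give the stated $L^p$ rates in the three regimes $s>2$, $s=2$ (with the $\epsilon_1$ loss since $B^0_{p,r}\not\hookrightarrow L^p$ for $r>1$), and $s<2$.
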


\begin{rema}\label{remark}
Under the assumption of Theorem \ref{th1}, we assume further that $s>2+max\{1, \frac d p\}$. Then we can prove the optimal convergence rates:
$\|u^\nu-u^0\|_{L^p}\leq C\nu,~~\|\psi^\nu-\psi^0\|_{L_x^p(\mathcal{L}^p)}\leq C\nu$ by estimating $\psi^\nu-\psi^0$ in $L_{T}^{\infty}(B^{s-3}_{p,r}(\mathcal{L}^p))$.
\end{rema}

\begin{rema}
Note that $B^s_{2,2}=H^s$. Our results cover the case in Sobolev spaces $H^s$  by taking $p=r=2$ in Theorem \ref{th1}. Our results on convergence rates cover the case for the incompressible Navier-Stokes equation in \cite{Bahouri2011,Masmoudi-2007CMP}  by taking $p=r=2$ and $\psi=0$ in Theorem \ref{th2}.
\end{rema}

\section{Preliminaries}
\par

First, we recall nonhomogeneous Bony's decomposition from \cite{Bahouri2011}.
$$uv=T_{u}v+T_{v}u+R(u,v),$$
with
$$T_{u}v=\sum_{j}S_{j-1}u\Delta_{j}v,\quad R(u,v)=\sum_{j}\sum_{|k-j|\leq1}\Delta_{j}u\Delta_{k}v.$$

This is a standard tool for nonlinear estimates. We can use Bony's decomposition to prove product laws:
\begin{lemm}\label{product}\cite{Bahouri2011}
(1) For any $s>0$ and any $(p,r)$ in $[1,\infty]^2$, the space $L^{\infty} \cap B^s_{p,r}$ is an algebra, and a constant $C=C(s,d)$ exists such that
$$ \|uv\|_{B^s_{p,r}}\leq C(\|u\|_{L^{\infty}}\|v\|_{B^s_{p,r}}+\|u\|_{B^s_{p,r}}\|v\|_{L^{\infty}}). $$
(2) If $2\leq p\leq \infty,\ 1\leq r\leq \infty,\ s_1\leq s_2,\ s_2>\frac{d}{p} (s_2 \geq \frac{d}{p}\ \text{if}\ r=1)$ and $s_1+s_2>0$, there exists $C=C(s_1,s_2,p,r,d)$ such that
$$ \|uv\|_{B^{s_1}_{p,r}}\leq C\|u\|_{B^{s_2}_{p,r}}\|v\|_{B^{s_1}_{p,r}}$$
and
$$ \|u\psi\|_{B^{s_1}_{p,r}(\mathcal{L}^p)}\leq C\|u\|_{B^{s_2}_{p,r}}\|\psi\|_{B^{s_1}_{p,r}(\mathcal{L}^p)}. $$
\end{lemm}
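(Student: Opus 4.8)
The whole lemma will follow from Bony's decomposition $uv=T_uv+T_vu+R(u,v)$ together with the standard continuity of the paraproduct and remainder operators on Besov spaces; the plan is to record those two building blocks and then assemble them according to the hypotheses. The estimates I will use are
$$\|T_uv\|_{B^{\sigma}_{p,r}}\le C\|u\|_{L^\infty}\|v\|_{B^{\sigma}_{p,r}}\ \ (\sigma\in\mathbb R),\qquad \|R(u,v)\|_{B^{\sigma}_{p,r}}\le C\|u\|_{L^\infty}\|v\|_{B^{\sigma}_{p,r}}\ \ (\sigma>0).$$
For the first, one uses that $\Delta_j(T_uv)$ is spectrally supported in a fixed dilate of an annulus, that only finitely many (a fixed number of) indices $j'$ contribute to it, and the pointwise bound $\|S_{j'-1}u\,\Delta_{j'}v\|_{L^p}\le\|u\|_{L^\infty}\|\Delta_{j'}v\|_{L^p}$; taking $\ell^r$-norms of $\big(2^{j\sigma}\,\cdot\,\big)_j$ and summing the finitely many contributing terms gives the bound for every $\sigma$. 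For the remainder, writing $\widetilde\Delta_j:=\Delta_{j-1}+\Delta_j+\Delta_{j+1}$ one has $R(u,v)=\sum_j\Delta_ju\,\widetilde\Delta_jv$, and $\Delta_\nu R(u,v)$ only receives contributions from indices $j\ge\nu-N_0$ for a fixed integer $N_0$ (that product has spectrum in a ball of radius $\sim2^{j}$); bounding $\|\Delta_{j}u\,\widetilde\Delta_{j}v\|_{L^p}\le\|u\|_{L^\infty}\|\widetilde\Delta_{j}v\|_{L^p}$ and writing
$$2^{\nu\sigma}\|\Delta_\nu R(u,v)\|_{L^p}\le C\|u\|_{L^\infty}\sum_{j\ge \nu-N_0}2^{(\nu-j)\sigma}\big(2^{j\sigma}\|\widetilde\Delta_{j}v\|_{L^p}\big),$$
the sequence $\ell\mapsto2^{-\ell\sigma}\mathbf 1_{\{\ell\ge-N_0\}}$ lies in $\ell^1$ precisely because $\sigma>0$, and Young's inequality for sequences finishes it. The $\mathcal L^p$-valued analogues are proved verbatim: Bony's decomposition localises only in the $x$-variable, $u$ is scalar and merely multiplies $\psi$, so H\"older in $x$ gives $\|\Delta_{j}u\,\widetilde\Delta_{j}\psi\|_{L^p_x(\mathcal L^p)}\le\|\Delta_{j}u\|_{L^\infty_x}\|\widetilde\Delta_{j}\psi\|_{L^p_x(\mathcal L^p)}$, and the (Banach-valued) Bernstein inequality in $x$ is unaffected by the fixed weight $\psi_\infty$.

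\textbf{Part (1).} Since $R(u,v)=R(v,u)$, the remainder is bounded by $\min\{\|u\|_{L^\infty}\|v\|_{B^s_{p,r}},\,\|v\|_{L^\infty}\|u\|_{B^s_{p,r}}\}$; adding the bounds for $T_uv$, $T_vu$ and $R(u,v)$ yields $\|uv\|_{B^s_{p,r}}\le C(\|u\|_{L^\infty}\|v\|_{B^s_{p,r}}+\|u\|_{B^s_{p,r}}\|v\|_{L^\infty})$ for $s>0$. Together with $\|uv\|_{L^\infty}\le\|u\|_{L^\infty}\|v\|_{L^\infty}$ this shows $L^\infty\cap B^s_{p,r}$, normed by $\|\cdot\|_{L^\infty}+\|\cdot\|_{B^s_{p,r}}$, is a Banach algebra.

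\textbf{Part (2).} Again split $uv=T_uv+T_vu+R(u,v)$ and estimate each term in $B^{s_1}_{p,r}$. Because $s_2>\frac dp$ (or $s_2\ge\frac dp$ when $r=1$) the Besov embedding $B^{s_2}_{p,r}\hookrightarrow L^\infty$ holds, so the first building block gives $\|T_uv\|_{B^{s_1}_{p,r}}\le C\|u\|_{L^\infty}\|v\|_{B^{s_1}_{p,r}}\le C\|u\|_{B^{s_2}_{p,r}}\|v\|_{B^{s_1}_{p,r}}$ for every $s_1$. For $T_vu$, when $s_1<s_2$ I use the negative-index paraproduct estimate $\|T_vu\|_{B^{s_1}_{p,r}}\le C\|v\|_{B^{s_1-s_2}_{\infty,\infty}}\|u\|_{B^{s_2}_{p,r}}$ (another short Littlewood--Paley computation, summing $\|S_{k-1}v\|_{L^\infty}\lesssim\sum_{k'\le k-2}2^{k'd/p}\|\Delta_{k'}v\|_{L^p}$) together with the embedding $B^{s_1}_{p,r}\hookrightarrow B^{s_1-\frac dp}_{\infty,\infty}\hookrightarrow B^{s_1-s_2}_{\infty,\infty}$, available exactly because $s_2\ge\frac dp$; the borderline $s_1=s_2$ is handled by $v\in B^{s_2}_{p,r}\hookrightarrow L^\infty$ and the zero-index paraproduct bound. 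Finally, for $R(u,v)$: since $p\ge2$ we have $\tfrac2p\le1$, so the remainder operator maps $B^{s_2}_{p,r}\times B^{s_1}_{p,\infty}$ into $B^{s_1+s_2}_{p/2,r}$ provided $s_1+s_2>0$; then the embeddings $B^{s_1+s_2}_{p/2,r}\hookrightarrow B^{s_1+s_2-\frac dp}_{p,r}\hookrightarrow B^{s_1}_{p,r}$ (the second valid because $s_2\ge\frac dp$) give $\|R(u,v)\|_{B^{s_1}_{p,r}}\le C\|u\|_{B^{s_2}_{p,r}}\|v\|_{B^{s_1}_{p,r}}$. Summing the three contributions proves the scalar inequality, and the $\mathcal L^p$-valued one follows by the same argument with $\|\cdot\|_{L^p_x}$ replaced by $\|\cdot\|_{L^p_x(\mathcal L^p)}$ throughout.

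\textbf{Main obstacle.} The routine part is the discrete-convolution / Young's-inequality bookkeeping in the two building blocks. The delicate point is the remainder term in part (2): one must check that the hypotheses $p\ge2$, $s_1+s_2>0$ and $s_2\ge\frac dp$ are invoked in exactly the right places, so that the remainder estimate applies (it needs $s_1+s_2>0$ and $\tfrac2p\le1$) and all the ensuing Besov embeddings point in the correct direction — in particular that the regularity lost in passing from the $L^{p/2}$- to the $L^p$-based space is compensated by the gain $(s_1+s_2)-s_1=s_2\ge\frac dp$.
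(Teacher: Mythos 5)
The paper does not prove this lemma; it simply cites \cite{Bahouri2011}. Your Bony-decomposition argument — paraproduct bounds, the remainder estimate into $B^{s_1+s_2}_{p/2,r}$ via H\"older (using $p\ge2$ and $s_1+s_2>0$), and the Besov embeddings fuelled by $s_2\ge d/p$ — is correct and is essentially the textbook proof from the cited reference, including the verbatim transfer to the $\mathcal L^p$-valued setting.
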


The following lemma will be useful to deal with the estimate of pressure term $\nabla P$.
\begin{lemm}\label{P}
For $(p,r)$ in $[1,\infty]^2$, $s>1+\frac d p$ and $s-2\leq\sigma\leq s$, there exists a constant $C=C(s,d,p,r)$ such that if $div~u=div~v=0$, then
\begin{align}\label{P1}
\|\nabla(-\Delta)^{-1}div(u\cdot\nabla v)\|_{B^\sigma_{p,r}}\leq C\min(\|u\|_{B^\sigma_{p,r}}\|v\|_{B^s_{p,r}},\|u\|_{B^s_{p,r}}\|v\|_{B^\sigma_{p,r}}).
\end{align}
\end{lemm}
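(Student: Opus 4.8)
The starting point is the observation that, because both fields are incompressible, the apparent loss of one derivative in $\nabla(-\Delta)^{-1}\mathrm{div}(u\cdot\nabla v)$ is fictitious. Write $Q(u,v):=\nabla(-\Delta)^{-1}\mathrm{div}(u\cdot\nabla v)$. Since $\mathrm{div}\,u=\mathrm{div}\,v=0$, a direct computation gives
\begin{align*}
\mathrm{div}(u\cdot\nabla v)=\sum_{i,j}\partial_j u^i\,\partial_i v^j=\mathrm{div}(v\cdot\nabla u),
\end{align*}
so this quantity is symmetric in $(u,v)$ and hence $Q(u,v)=Q(v,u)$. It therefore suffices to prove the single bound $\|Q(u,v)\|_{B^\sigma_{p,r}}\lesssim\|u\|_{B^s_{p,r}}\|v\|_{B^\sigma_{p,r}}$; exchanging $u$ and $v$ then produces the other term of the minimum. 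I record here that each $\partial_j u^i$ lies in $L^\infty$, since $s-1>\frac dp$; this will be used repeatedly. The key structural point is that $g:=\sum_{i,j}\partial_j u^i\,\partial_i v^j$ is a genuine bilinear product with \emph{no net derivative}, and that $\nabla(-\Delta)^{-1}$ is a smooth homogeneous multiplier of degree $-1$ which therefore \emph{gains} one derivative on frequencies $\gtrsim 1$ while being a bounded zero-order operator on frequencies $\lesssim 1$.

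Next I would decompose $\partial_j u^i\,\partial_i v^j=T_{\partial_j u^i}\partial_i v^j+T_{\partial_i v^j}\partial_j u^i+R(\partial_j u^i,\partial_i v^j)$ by Bony's formula and estimate $Q(u,v)=\nabla(-\Delta)^{-1}g$ piece by piece. On a high block ($q\ge 0$), $\Delta_q g$ is spectrally supported in the annulus $2^q\mathcal C$, where the multiplier estimate yields $\|\Delta_q Q(u,v)\|_{L^p}\lesssim 2^{-q}\|\Delta_q g\|_{L^p}$, a gain of one derivative. For $T_{\partial_j u^i}\partial_i v^j$ the classical paraproduct estimate together with $\partial_j u^i\in L^\infty$ gives $2^{q(\sigma-1)}\|\Delta_q(T_{\partial_j u^i}\partial_i v^j)\|_{L^p}\lesssim c_q\|u\|_{B^s_{p,r}}\|v\|_{B^\sigma_{p,r}}$ with $(c_q)\in\ell^r$ of norm $\lesssim1$, and combining with the gain closes this piece in $B^\sigma_{p,r}$. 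For $T_{\partial_i v^j}\partial_j u^i$ the rough factor sits outside; one uses the paraproduct estimate with a possibly negative index, placing $T_{\partial_i v^j}\partial_j u^i$ in a Besov space of regularity at least $(s-1)+\min(\sigma-1-\tfrac dp,\,0)$, so that after the derivative gained by $\nabla(-\Delta)^{-1}$ the result sits in a space of regularity $\ge\sigma$, precisely because $s-1>\tfrac dp$; this again yields the desired bound. Finally, for the low-frequency block $\Delta_{-1}Q(u,v)$ one returns to the original form: $\nabla(-\Delta)^{-1}\mathrm{div}$ has symbol $-\xi\otimes\xi/|\xi|^2$, a zero-order Calderón–Zygmund multiplier bounded on $L^p$ for $1<p<\infty$ (which covers the range $2\le p<\infty$ used here), so, using $u\cdot\nabla v=\mathrm{div}(u\otimes v)$, Bernstein's inequality and the product law of Lemma \ref{product}, $\|\Delta_{-1}Q(u,v)\|_{L^p}\lesssim\|u\cdot\nabla v\|_{B^{-m}_{p,r}}\lesssim\|u\|_{B^s_{p,r}}\|v\|_{B^{1-m}_{p,r}}\lesssim\|u\|_{B^s_{p,r}}\|v\|_{B^\sigma_{p,r}}$ for any $m\ge 1-\sigma$ (such $m$ existing since $1-\sigma\le 3-s<1+s$).

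The main obstacle is the remainder $R(\partial_j u^i,\partial_i v^j)$: its dyadic pieces are supported in balls, not annuli, so its output carries low frequencies and one must genuinely exploit the derivative furnished by $\nabla(-\Delta)^{-1}$. Inside $\Delta_q$ one writes $R(\partial_j u^i,\partial_i v^j)=\sum_{\nu\ge q-N}\Delta_\nu(\partial_j u^i)\,\widetilde\Delta_\nu(\partial_i v^j)$, bounds $\|\Delta_\nu(\partial_j u^i)\|_{L^\infty}\lesssim 2^{-\nu(s-1-d/p)}c_\nu\|u\|_{B^s_{p,r}}$ (Bernstein plus $B^{s-1}_{p,r}\hookrightarrow L^\infty$) and $\|\widetilde\Delta_\nu(\partial_i v^j)\|_{L^p}\lesssim 2^{-\nu(\sigma-1)}d_\nu\|v\|_{B^\sigma_{p,r}}$, sums in $\nu$, and uses the factor $2^{-q}$ gained by $\nabla(-\Delta)^{-1}$ on the high blocks; the resulting power of $2^q$ is $2^{-q(s-1-d/p)}$ times an $\ell^r$ sequence, which is summable because $s-1-d/p>0$, and for the low block one again invokes the zero-order bound above. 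This is exactly the step where the hypotheses $s>1+\tfrac dp$ and $s-2\le\sigma$ are needed. Adding the low- and high-frequency contributions over all three Bony terms, then appealing to the symmetry $Q(u,v)=Q(v,u)$ to obtain the companion estimate with $u$ and $v$ interchanged, completes the proof of \eqref{P1}.
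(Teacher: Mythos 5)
Your overall approach is the right one and matches what the paper is invoking: the divergence--free cancellation $\operatorname{div}(u\cdot\nabla v)=\sum_{i,j}\partial_j u^i\,\partial_i v^j$, Bony's decomposition, the one--derivative gain from $\nabla(-\Delta)^{-1}$ on high blocks, and the embedding $B^{s-1}_{p,r}\hookrightarrow L^\infty$ (equivalently $B^s_{p,r}\hookrightarrow C^{0,1}$). The symmetry observation $Q(u,v)=Q(v,u)$ is a genuine economization over the usual presentation and correctly reduces the $\min$ to a single inequality. The treatment of the two paraproducts and of $\Delta_{-1}$ is essentially sound (the Calder\'on--Zygmund bound does indeed force $1<p<\infty$, which is all the paper actually uses).

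However, your remainder estimate has a gap and the sentence ``the resulting power of $2^q$ is $2^{-q(s-1-d/p)}$ times an $\ell^r$ sequence, \dots\ This is exactly the step where the hypotheses $s>1+\tfrac dp$ and $s-2\le\sigma$ are needed'' is not correct as written. After inserting your bounds and the $2^{-q}$ gain, the quantity to control is
\begin{align*}
2^{q\sigma}\bigl\|\Delta_q\nabla(-\Delta)^{-1}R\bigr\|_{L^p}
\;\lesssim\;\sum_{\nu\ge q-N}2^{(q-\nu)(\sigma-1)}\,2^{-\nu(s-1-\frac dp)}\,c_\nu d_\nu
\;=\;2^{-q(s-1-\frac dp)}\sum_{k\le N}2^{k(\sigma+s-2-\frac dp)}\,c_{q-k}d_{q-k}.
\end{align*}
For Young's inequality to produce an $\ell^r$ sequence in $q$, the kernel $\bigl(2^{k(\sigma+s-2-d/p)}\mathds{1}_{\{k\le N\}}\bigr)_k$ must lie in $\ell^1$, which forces $\sigma+s-2-\tfrac dp>0$; otherwise the tail $\nu\to\infty$ (equivalently $k\to-\infty$) diverges because, for $\sigma<1$, the factor $2^{(q-\nu)(\sigma-1)}=2^{(\nu-q)(1-\sigma)}$ grows faster than $2^{-\nu(s-1-d/p)}$ decays. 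The hypotheses $s>1+\tfrac dp$ and $\sigma\ge s-2$ give only $\sigma+s-2-\tfrac dp>\tfrac dp-2$, which is not nonnegative when $\tfrac dp<2$; e.g.\ $d=2$, $p=4$, $s$ slightly above $\tfrac32$, $\sigma=s-2$ makes the exponent strictly negative, and the summation in $\nu$ is not under control. (The same constraint $\sigma+s>2+\tfrac dp$ is in fact needed merely to give meaning to the remainder $R(\partial u,\partial v)$ as a distribution.) To close the argument you would need either the extra hypothesis $\sigma+s>2+\tfrac dp$, or, when this fails, an argument that bypasses the Bony remainder of $\partial_j u^i\,\partial_i v^j$ --- for instance by keeping the outer $\operatorname{div}$ and working with $R(u^i,\partial_i v)$ so that the rough factor carries only first, not second, derivatives of $v$. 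As it stands, the step is justified only in the part of the range $s-2\le\sigma\le s$ where $\sigma+s-\tfrac dp>2$; you should flag which additional constraint you are using, rather than asserting that $s>1+\tfrac dp$ and $\sigma\ge s-2$ suffice. (I note that the paper's own proof is a one--line reference to Lemmas 7.9--7.10 of Bahouri--Chemin--Danchin, which do not obviously cover the full $[s-2,s]$ range either; but the burden of a written proof is to check this.)
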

\begin{proof}
By a standard argument as in Lemmas $7.9-7.10$ in \cite{Bahouri2011}, using Bony's decomposition and the fact that $B^s_{p,r}\hookrightarrow C^{0,1}$, we can check that \eqref{P1} is true. We omit the details here.
\end{proof}

The following lemma allows us to estimate the extra stress tensor $\tau$.
\begin{lemm}\label{Lemma3}
\cite{Masmoudi2008} Let $z=1-|R|$. If $(p-1)k>1$, then
$$(\int_{B}\frac{|\psi|}{z} dR)^p\leq C\int_{B}|\frac{\psi}{\psi_{\infty}}|^{p}\psi_{\infty}dR.$$
\end{lemm}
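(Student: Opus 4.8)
The plan is to deduce the estimate from a single weighted Hölder inequality, arranged so that the hypothesis $(p-1)k>1$ is precisely the condition making an auxiliary integral finite. First I would record that, by the very definition of $\mathcal{L}^{p}$,
$$\int_{B}\Big|\frac{\psi}{\psi_{\infty}}\Big|^{p}\psi_{\infty}\,dR=\int_{B}\frac{|\psi|^{p}}{\psi_{\infty}^{p-1}}\,dR.$$
Writing $p'=\frac{p}{p-1}$ for the conjugate exponent and using the pointwise splitting
$$\frac{|\psi|}{z}=\Big(\frac{|\psi|^{p}}{\psi_{\infty}^{p-1}}\Big)^{\frac1p}\cdot\Big(\frac{\psi_{\infty}}{z^{p'}}\Big)^{\frac1{p'}},$$
Hölder's inequality in the variable $R$ over $B$ gives
$$\int_{B}\frac{|\psi|}{z}\,dR\leq\Big(\int_{B}\frac{|\psi|^{p}}{\psi_{\infty}^{p-1}}\,dR\Big)^{\frac1p}\Big(\int_{B}\frac{\psi_{\infty}}{z^{p'}}\,dR\Big)^{\frac1{p'}}.$$
Raising this to the $p$-th power yields the claimed inequality with $C=\big(\int_{B}\psi_{\infty}z^{-p'}\,dR\big)^{p-1}$, provided this last integral converges.

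The only substantive point left is therefore to verify $\int_{B}\psi_{\infty}z^{-p'}\,dR<\infty$. Here I would use the explicit equilibrium $\psi_{\infty}(R)=c_{0}(1-|R|^{2})^{k}$ with $c_{0}=\big(\int_{B}(1-|R|^{2})^{k}\,dR\big)^{-1}$, together with the elementary two-sided bound $z\leq 1-|R|^{2}=(1-|R|)(1+|R|)\leq 2z$, valid on all of $B$. This shows that $\psi_{\infty}z^{-p'}$ is, up to the fixed constants $c_{0}$ and $2^{k}c_{0}$, comparable with $z^{k-p'}$. Passing to polar coordinates $R=r\omega$ (the angular integration contributing only a bounded factor $|S^{d-1}|$), the question reduces to the convergence of $\int_{0}^{1}(1-r)^{k-p'}r^{d-1}\,dr$, whose only possible singularity is at $r=1$; this integral is finite precisely when $k-p'>-1$, i.e. when $k>\frac{1}{p-1}$, that is when $(p-1)k>1$. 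This is exactly the standing hypothesis, which closes the argument.

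I expect the boundary-behaviour analysis of $\psi_{\infty}z^{-p'}$ and the radial reduction just sketched to be the only delicate part; everything else is a one-line application of Hölder. It is worth noting that the resulting constant $C$ depends only on $p$, $k$ and $d$, through $c_{0}$ and the value of the convergent integral, which is the form needed when this lemma is later invoked to control the extra-stress tensor $\tau$ in $\tilde{L}^{\infty}_{T}(B^{s-1}_{p,r})$.
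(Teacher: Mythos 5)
Your proof is correct. Note that the paper does not contain its own proof of this lemma---it is simply cited from Masmoudi (2008)---so there is nothing internal to compare against, but your argument is the standard one for this type of weighted estimate and is surely essentially what the reference does. The Hölder factorization
$$\frac{|\psi|}{z}=\Bigl(\frac{|\psi|^{p}}{\psi_{\infty}^{p-1}}\Bigr)^{1/p}\Bigl(\frac{\psi_{\infty}}{z^{p'}}\Bigr)^{1/p'}$$
is exact since $1/p'=(p-1)/p$, and raising to the $p$-th power gives the constant $C=\bigl(\int_{B}\psi_{\infty}z^{-p'}\,dR\bigr)^{p-1}$. Your reduction of finiteness of this integral via $z\le 1-|R|^{2}\le 2z$ to the one-dimensional criterion $k-p'>-1$, which is exactly $(p-1)k>1$, is correct and shows cleanly that the hypothesis is used in precisely the right place; the radial factor $r^{d-1}$ is indeed harmless since the only singularity sits at $r=1$.
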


\begin{lemm}\label{new estimate}\cite{Bahouri2011,Li-Yin,Luo-Yin-NA}
Let $1\leq p\leq\infty,\ 1\leq r\leq\infty,\ \sigma>-1-d\min(\frac 1 {p}, \frac 1 {p'})$ and $div~u=0$. Define $R_j=[u\cdot\nabla, \Delta_j]\psi(t,x,R)$. There exists a constant $C$ such that for $\sigma>0$, we have
$$\Big\|(2^{j\sigma}\|R_j\|_{L^p_{x}(\mathcal{L}^p)})_j\Big\|_{l^r(\mathbb{Z})}\leq C(\|\nabla u\|_{L^{\infty}}\|\psi\|_{B^{\sigma}_{p,r}(\mathcal{L}^p)}+\|\nabla u\|_{B^{\sigma-1}_{p,r}}\|\nabla_{x}\psi\|_{L^{\infty}_{x}(\mathcal{L}^p)}).$$
If $\sigma>1+\frac d p$ or $\sigma=1+\frac d p,~r=1$, we have
$$\Big\|(2^{j\sigma}\|R_j\|_{L^p_{x}(\mathcal{L}^p)})_j\Big\|_{l^r(\mathbb{Z})}\leq C\|\nabla u\|_{B^{\sigma-1}_{p,r}}\|\psi\|_{B^\sigma_{p,r}(\mathcal{L}^p)}.$$
If $\sigma=1+\frac d p,~r>1$, we have
$$\Big\|(2^{j\sigma}\|R_j\|_{L^p_{x}(\mathcal{L}^p)})_j\Big\|_{l^r(\mathbb{Z})}\leq C\|\nabla u\|_{B^{\sigma}_{p,r}}\|\psi\|_{B^\sigma_{p,r}(\mathcal{L}^p)}.$$
If $\sigma<1+\frac d p$, we have
$$\Big\|(2^{j\sigma}\|R_j\|_{L^p_{x}(\mathcal{L}^p)})_j\Big\|_{l^r(\mathbb{Z})}\leq C\|\nabla u\|_{B^{\frac {d} {p}}_{p,r}\cap L^{\infty}}\|\psi\|_{B^\sigma_{p,r}(\mathcal{L}^p)}.$$
\end{lemm}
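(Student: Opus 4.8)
The plan is to observe first that the variable $R\in B$ is a mere parameter: the operators $\Delta_j$, $S_j$ and $u\cdot\nabla=u\cdot\nabla_x$ act only on $x$, while $\mathcal{L}^p$ is a weighted $L^p$-space in $R$. So I would regard $\psi(t,x,\cdot)$ as an $\mathcal{L}^p$-valued function of $x$; then the whole Littlewood--Paley calculus --- Bernstein's inequalities in $x$, the convolution bound $\|h_j\ast g\|_{L^p_x(\mathcal{L}^p)}\le\|h_j\|_{L^1}\|g\|_{L^p_x(\mathcal{L}^p)}$, Bony's decomposition, and the product laws of Lemma \ref{product} --- carries over verbatim with $L^p_x$ replaced by $L^p_x(\mathcal{L}^p)$. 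Consequently the statement is exactly the classical commutator estimate of \cite{Bahouri2011} (Lemma 2.100 and the surrounding estimates), refined as in \cite{Li-Yin,Luo-Yin-NA}, transcribed to this vector-valued framework, and I would reproduce that argument.

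In detail, applying Bony's decomposition in $x$ to each product $u^k\partial_k\psi$ (summed over $k$) splits
$$R_j=\sum_k\Big([T_{u^k},\Delta_j]\partial_k\psi+\big(T_{\partial_k\Delta_j\psi}u^k-\Delta_jT_{\partial_k\psi}u^k\big)+\big(R(u^k,\partial_k\Delta_j\psi)-\Delta_jR(u^k,\partial_k\psi)\big)\Big)=:R_j^1+R_j^2+R_j^3.$$
For $R_j^1$ I would write each piece $[S_{j'-1}u^k,\Delta_j]\Delta_{j'}\partial_k\psi$ (only $|j'-j|\le4$ contribute) as a convolution against $h_j=2^{jd}h(2^j\cdot)$, $h=\mathcal F^{-1}\varphi$, apply the mean value inequality $|S_{j'-1}u^k(x-y)-S_{j'-1}u^k(x)|\le|y|\,\|\nabla u\|_{L^\infty}$ together with $\| |y| h_j\|_{L^1}\lm2^{-j}$, and sum in $j'$ to obtain $\|R_j^1\|_{L^p_x(\mathcal{L}^p)}\lm c_j2^{-j\sigma}\|\nabla u\|_{L^\infty}\|\psi\|_{B^\sigma_{p,r}(\mathcal{L}^p)}$ with $\|(c_j)\|_{\ell^r}=1$. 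For $R_j^2$, every summand carries a low-frequency factor $S_{j'-1}\partial_k\psi$ or $S_{j'-1}\partial_k\Delta_j\psi$, bounded in $L^\infty_x(\mathcal{L}^p)$ by $\|\nabla_x\psi\|_{L^\infty_x(\mathcal{L}^p)}$; Hölder in $x$ and the finiteness of the sum over $j'$ then give $\|R_j^2\|_{L^p_x(\mathcal{L}^p)}\lm c_j2^{-j\sigma}\|\nabla_x\psi\|_{L^\infty_x(\mathcal{L}^p)}\|u\|_{B^\sigma_{p,r}}$, and $\|u\|_{B^\sigma_{p,r}}\lm\|\nabla u\|_{B^{\sigma-1}_{p,r}}$. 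For the remainder $R_j^3$ I would crucially use $\mathrm{div}\,u=0$ to rewrite $\sum_kR(u^k,\partial_k g)=\sum_k\partial_kR(u^k,g)$, moving the derivative outside the remainder; estimating the resulting spectrally localized sum by the continuity of $R(\cdot,\cdot)$ --- one index carried by $\nabla u$ (either in $L^\infty$, or in $B^{d/p}_{p,r}$ when $L^\infty$ information on $\nabla_x\psi$ is unavailable), the other by $\psi\in B^\sigma_{p,r}(\mathcal{L}^p)$ --- is precisely where the hypothesis $\sigma>-1-d\min(\frac1p,\frac1{p'})$ enters, this being the range in which the two indices add up positively after the gain of one derivative. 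Collecting the three bounds yields the first displayed inequality, valid for $\sigma>0$.

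The remaining three cases then follow from this master estimate by Besov embeddings. If $\sigma>1+\frac dp$ then $\|\nabla u\|_{L^\infty}\lm\|\nabla u\|_{B^{\sigma-1}_{p,r}}$ and $\|\nabla_x\psi\|_{L^\infty_x(\mathcal{L}^p)}\lm\|\nabla_x\psi\|_{B^{d/p}_{p,r}(\mathcal{L}^p)}\lm\|\psi\|_{B^\sigma_{p,r}(\mathcal{L}^p)}$, which collapses the right-hand side to $\|\nabla u\|_{B^{\sigma-1}_{p,r}}\|\psi\|_{B^\sigma_{p,r}(\mathcal{L}^p)}$; for $\sigma=1+\frac dp$ with $r=1$ the same works since $B^{d/p}_{p,1}\hookrightarrow L^\infty$; for $\sigma=1+\frac dp$ with $r>1$ the embedding $B^{\sigma-1}_{p,r}\hookrightarrow L^\infty$ fails, so one pays one index and bounds $\|\nabla u\|_{L^\infty}\lm\|\nabla u\|_{B^{\sigma}_{p,r}}$; and for $\sigma<1+\frac dp$, where $\nabla_x\psi$ need not lie in $L^\infty_x$, one keeps $R_j^2$ and $R_j^3$ in the divergence-form shape above and uses $\|\nabla u\|_{B^{d/p}_{p,r}\cap L^\infty}$ to close the estimate. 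I expect the real difficulty to be exactly this last range: arranging the paraproduct and remainder pieces so that $\mathrm{div}\,u=0$ buys the extra derivative needed to descend to negative $\sigma$, while simultaneously tracking the logarithmic endpoint $\sigma=1+\frac dp$. The $\mathcal{L}^p$-valued bookkeeping, by contrast, is routine and introduces no new idea.
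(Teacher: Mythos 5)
The paper does not prove this lemma; it is imported verbatim from \cite{Bahouri2011,Li-Yin,Luo-Yin-NA}, so there is no internal proof to compare against. Your reconstruction of the general architecture is faithful to those references: treating $R\in B$ as a passive parameter so the Littlewood--Paley calculus applies to $\mathcal{L}^p$-valued functions, splitting $R_j$ via Bony's decomposition into the three standard pieces, the mean-value-plus-convolution trick for the $[T_{u^k},\Delta_j]\partial_k\psi$ term, and the use of $\mathrm{div}\,u=0$ to write $\sum_k R(u^k,\partial_k g)=\sum_k\partial_k R(u^k,g)$ and thereby gain a derivative in the remainder (which is what opens the door to negative $\sigma$) --- all of this is the correct strategy.

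The gap is in the claim that ``the remaining three cases then follow from this master estimate by Besov embeddings.'' That derivation works only for the cases $\sigma>1+\frac dp$ and $\sigma=1+\frac dp,\ r=1$. For $\sigma=1+\frac dp,\ r>1$, the second term of the master estimate, $\|\nabla u\|_{B^{\sigma-1}_{p,r}}\|\nabla_x\psi\|_{L^\infty_x(\mathcal{L}^p)}$, cannot be bounded by $\|\nabla u\|_{B^\sigma_{p,r}}\|\psi\|_{B^\sigma_{p,r}(\mathcal{L}^p)}$: you would need $B^{\sigma-1}_{p,r}=B^{d/p}_{p,r}\hookrightarrow L^\infty$, which fails for $r>1$, and your fix (``pay one index'' on $\|\nabla u\|_{L^\infty}$) only repairs the \emph{first} term. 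The same objection applies \emph{a fortiori} to $\sigma<1+\frac dp$, where you already sense the problem. In those two regimes one has to abandon the $L^\infty_x$-of-$\nabla_x\psi$ route altogether and re-estimate the paraproduct piece $T_{\partial_k\psi}u^k$ directly: bound $\|S_{j'-1}\partial_k\psi\|_{L^\infty_x(\mathcal{L}^p)}$ by Bernstein's inequality as $\sum_{j''<j'-1}2^{j''(1+d/p)}\|\Delta_{j''}\psi\|_{L^p_x(\mathcal{L}^p)}$, exploit $\sigma\le 1+\frac dp$ to make the resulting geometric sum converge (picking up the factor $2^{j'(1+d/p-\sigma)}$), and pair with $\|\Delta_{j'}u\|_{L^p}\lesssim2^{-j'(1+d/p)}\|\nabla u\|_{B^{d/p}_{p,\infty}}$. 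The logarithmic endpoint $\sigma=1+\frac dp,\ r>1$ is then exactly the case where this summation costs you one $\ell^r$-index, producing $\|\nabla u\|_{B^{\sigma}_{p,r}}$ on the right. A smaller, related glossing: the inequality $\|u\|_{B^\sigma_{p,r}}\lesssim\|\nabla u\|_{B^{\sigma-1}_{p,r}}$ is false as written (low frequencies of $u$ are not controlled); what one actually has is control of $u-\Delta_{-1}u$, and the $\Delta_{-1}u$ contribution has to be absorbed separately, typically via a direct first-order commutator bound on $[\Delta_{-1},u^k\partial_k]$.
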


Now we state a useful estimate in the study of transport-diffusion equation, which is crucial to the proofs of our main theorem later.
\begin{equation}\label{tran-diff}
\left\{\begin{array}{l}
    f_t+v\cdot\nabla f-\nu\Delta f=g,\ x\in\mathbb{R}^d,\ t>0, \\
    f(0,x)=f_0(x).
\end{array}\right.
\end{equation}

\begin{lemm}\label{u}\cite{Bahouri2011,Li-Yin}
Let $1\leq p,r\leq\infty,\ \sigma> -d\min(\frac 1 {p}, \frac 1 {p'})$ or $\sigma>-1-d\min(\frac 1 {p}, \frac 1 {p'})$ if $div~v=0$. There exists a constant C ,depending only on $d,p,r,\sigma$, for any smooth solution $f$ of \eqref{tran-diff} and $t\geq0$, such that if $\nu>0,~1\leq \rho\leq\infty$, then
$$\|f(t)\|_{\tilde{L}^{\infty}_{T}({B^\sigma_{p,r}})}\leq Ce^{CV_p(v,t)}\big(\|f_0\|_{B^\sigma_{p,r}}+(1+\nu T)^{1-\frac 1 \rho}\nu^{\frac 1 \rho -1}
\|g\|_{\tilde{L}^{\rho}_{T}(B^{\sigma-2+\frac 2 \rho}_{p,r})}\big),$$
and if $\nu=0$, then
$$\|f(t)\|_{\tilde{L}^{\infty}_{T}({B^\sigma_{p,r}})}\leq Ce^{CV_p(v,t)}\big(\|f_0\|_{B^\sigma_{p,r}}+
\|g\|_{\tilde{L}^{1}_{T}(B^{\sigma}_{p,r})}\big),$$
with
$$
   V'_p(v,t)=\left\{\begin{array}{ll}
    \|\nabla v\|_{B^{\frac d {p}}_{p,\infty}\bigcap L^{\infty}}, &\ \text{if}\ \sigma<1+\frac d {p}, \\
    \|\nabla v\|_{B^{1+\frac d {p} }_{p,r}}, &\ \text{if}\ \sigma=1+\frac d {p}, \ r>1,   \\
    \|\nabla v\|_{B^{\sigma-1}_{p,r}}, &\ \text{if}\ \sigma>1+\frac d {p}\ or\ (\sigma=1+\frac d {p}\ and\ r=1).
  \end{array}\right.
$$
\end{lemm}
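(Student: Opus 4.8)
The plan is to localize \eqref{tran-diff} in frequency, run an $L^p$ energy estimate on each dyadic block (exploiting both $\operatorname{div} v = 0$ and the parabolic smoothing), and then reassemble the pieces with Gronwall's inequality; throughout one works with the smooth solution $f$, so no mollification argument is needed. \textbf{Step 1 (localization and energy estimate).} Applying $\Delta_j$ and writing $f_j=\Delta_j f$, $R_j=[v\cdot\nabla,\Delta_j]f$ gives $\partial_t f_j+v\cdot\nabla f_j-\nu\Delta f_j=\Delta_j g+R_j$. Multiplying by $|f_j|^{p-2}f_j$ and integrating, the transport term drops since $\int|f_j|^{p-2}f_j\,v\cdot\nabla f_j\,dx=\frac1p\int v\cdot\nabla|f_j|^p\,dx=0$ by incompressibility, while the spectral localization of $f_j$ in the annulus $2^j\mathcal C$ together with the $L^p$ heat-smoothing estimate for dyadic blocks (equivalently $\|e^{\nu t\Delta}\Delta_j h\|_{L^p}\le Ce^{-c\nu t2^{2j}}\|\Delta_j h\|_{L^p}$) yields a lower bound $-\int|f_j|^{p-2}f_j\Delta f_j\,dx\ge c\,2^{2j}\|f_j\|_{L^p}^p$. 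This produces
$$\tfrac{d}{dt}\|f_j\|_{L^p}+c\nu2^{2j}\|f_j\|_{L^p}\le\|\Delta_j g\|_{L^p}+\|R_j\|_{L^p},$$
hence by Duhamel $\|f_j(t)\|_{L^p}\le e^{-c\nu t2^{2j}}\|\Delta_j f_0\|_{L^p}+\int_0^t e^{-c\nu(t-\tau)2^{2j}}\big(\|\Delta_j g(\tau)\|_{L^p}+\|R_j(\tau)\|_{L^p}\big)d\tau$.

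\textbf{Step 2 (reassembly).} Multiply by $2^{j\sigma}$ and take the $\ell^r(\mathbb Z)$ norm. For the source term I would use Young's inequality in time: $\|e^{-c\nu\,\cdot\,2^{2j}}\|_{L^{\rho'}(0,T)}\le C\big((1+\nu T)\nu^{-1}2^{-2j}\big)^{1/\rho'}$, which follows from $\int_0^T e^{-c\rho'\nu s2^{2j}}ds\le\min\!\big(T,(c\rho'\nu2^{2j})^{-1}\big)\le(1+\nu T)(c\rho'\nu2^{2j})^{-1}$. Since $2^{j\sigma}2^{-2j/\rho'}=2^{j(\sigma-2+2/\rho)}$, the $g$-contribution is $\le C(1+\nu T)^{1-1/\rho}\nu^{1/\rho-1}\|g\|_{\tilde L^\rho_T(B^{\sigma-2+2/\rho}_{p,r})}$, matching the stated regularity and $\nu$-power. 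For the commutator I would bound $e^{-c\nu(t-\tau)2^{2j}}\le1$ and invoke the standard commutator estimate for scalar transport (the scalar analogue of Lemma \ref{new estimate}, cf. \cite{Bahouri2011,Li-Yin}): $\big\|(2^{j\sigma}\|R_j(\tau)\|_{L^p})_j\big\|_{\ell^r}\le C\,V'_p(v,\tau)\|f(\tau)\|_{B^\sigma_{p,r}}$, where the three regimes for $V'_p$ are precisely dictated by whether $\sigma$ lies above, at, or below $1+\frac dp$ (the lower bound $\sigma>-d\min(\frac1p,\frac1{p'})$, resp. $-1-d\min(\frac1p,\frac1{p'})$ under $\operatorname{div} v=0$, being exactly what makes Bony's decomposition of the commutator converge).

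\textbf{Step 3 (closing the estimate).} Collecting the bounds gives, with $V_p(v,t)=\int_0^t V'_p(v,\tau)\,d\tau$,
$$\|f\|_{\tilde L^\infty_t(B^\sigma_{p,r})}\le C\|f_0\|_{B^\sigma_{p,r}}+C(1+\nu t)^{1-1/\rho}\nu^{1/\rho-1}\|g\|_{\tilde L^\rho_t(B^{\sigma-2+2/\rho}_{p,r})}+C\int_0^t V'_p(v,\tau)\|f\|_{\tilde L^\infty_\tau(B^\sigma_{p,r})}\,d\tau,$$
and Gronwall's inequality absorbs the last term and produces the factor $e^{CV_p(v,t)}$. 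The case $\nu=0$ is handled identically but more simply: every smoothing exponential is replaced by $1$, and the source term is estimated directly by $\int_0^t\|\Delta_j g(\tau)\|_{L^p}\,d\tau$, giving the $\tilde L^1_t(B^\sigma_{p,r})$ norm.

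\textbf{Main obstacle.} The technical heart is twofold: first, the $L^p$ heat-smoothing inequality on dyadic blocks, which is not elementary for $p\neq2$ and requires a Fourier-multiplier/kernel estimate for $e^{\nu t\Delta}\varphi(2^{-j}D)$ — this is what fixes the gain at exactly $2^{2j}$ and hence the $2/\rho$ extra derivatives after time integration; second, keeping the dependence on $\nu$ and $T$ sharp, i.e. extracting precisely $(1+\nu T)^{1-1/\rho}\nu^{1/\rho-1}$ from $\min\!\big(T,(\nu2^{2j})^{-1}\big)^{1/\rho'}$ uniformly in $j\in\mathbb Z$. The commutator estimate, although it carries the three-case structure of $V'_p$, is by now routine.
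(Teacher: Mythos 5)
Your proof is correct and reproduces the standard argument found in the references the paper cites for this lemma (Bahouri--Chemin--Danchin, Li--Yin): dyadic localization, an $L^p$ estimate on each block with the Laplacian furnishing a damping $c\nu 2^{2j}$, Young's inequality in time to extract the sharp factor $(1+\nu T)^{1-1/\rho}\nu^{1/\rho-1}$, the three-regime commutator bound, and Gronwall to absorb the transport contribution into $e^{CV_p}$.

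One caveat worth making explicit. The energy multiplier $|f_j|^{p-2}f_j$ and the lower bound $-\int|f_j|^{p-2}f_j\,\Delta f_j\,dx\ge c\,2^{2j}\|f_j\|_{L^p}^p$ for spectrally localized $f_j$ (Danchin's lemma) are established in the references for $2\le p<\infty$; the endpoints $p\in\{1,\infty\}$ admitted by the statement do not fit this $L^p$ energy scheme. The standard way to cover the full range $1\le p\le\infty$ is the route you mention parenthetically: rewrite the localized equation in Lagrangian coordinates along the flow of $v$, then apply Duhamel's formula with the heat semigroup and the kernel estimate $\|e^{\nu t\Delta}\Delta_j\|_{L^p\to L^p}\le Ce^{-c\nu t2^{2j}}$, which holds uniformly for all $p\in[1,\infty]$. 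Also note that you used $\operatorname{div}v=0$ to kill the transport term outright; without that hypothesis (the case $\sigma>-d\min(\tfrac1p,\tfrac1{p'})$ in the statement) one keeps the extra term $\tfrac1p\int|f_j|^p\,\operatorname{div}v\,dx$, which is controlled by $\|\operatorname{div}v\|_{L^\infty}\|f_j\|_{L^p}^p$ and folded into the Gronwall factor. With those two points acknowledged, the argument is complete and matches the reference proof.
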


\section{Linear problem and a priori estimates}
\par
In this section we will consider the following linear equations for \eqref{eq1}:
\begin{align}\label{eq2}
\left\{
\begin{array}{ll}
u_t+(v\cdot\nabla)u-\frac {\epsilon} {Re}\Delta u+\nabla{P}=\frac {1-\epsilon} {Re}div~\tau+f,  ~~~~~~~div~v=0,\\[1ex]
\psi_t+(v\cdot\nabla)\psi=div_{R}[-\sigma(v)\cdot{R}\psi+\frac {1} {2}\psi_{\infty}\nabla_{R}\frac{\psi}{\psi_{\infty}}+g]+f,  ~~~~~~~div~v=0,\\[1ex]
\tau_{ij}=\epsilon\int_{B}(R_{i}\nabla_{R_j}\mathcal{U})\psi dR, \\[1ex]
u|_{t=0}=u_0,~~ \psi|_{t=0}=\psi_0, \\[1ex]
\psi_{\infty}\nabla_{R}\frac{\psi}{\psi_{\infty}}\cdot{n}=0 ~~~~ \text{on} ~~~~ \partial B(0,1) .\\[1ex]
\end{array}
\right.
\end{align}

\begin{lemm}\label{weak solution}\cite{Luo-Yin-NA}
If $A(t)\in C([0,T])$ is a matrix-valued function and $\psi_0\in \mathcal{L}^p$ with $p\in [2,\infty)$, then
\begin{align}\label{eq3}
\left\{
\begin{array}{ll}
\psi_t=div_{R}[-A(t)\cdot{R}\psi+\frac {1} {2}\psi_{\infty}\nabla_{R}\frac{\psi}{\psi_{\infty}}],  ~~~~~~~\\[1ex]
\psi|_{t=0}=\psi_0, \\[1ex]
\psi_{\infty}\nabla_{R}\frac{\psi}{\psi_{\infty}}\cdot{n}=0 ~~~~ \text{on} ~~~~ \partial B(0,1) \\[1ex]
\end{array}
\right.
\end{align}
has a unique weak solution $\psi\in C([0,T];\mathcal{L}^p)$. Moreover, we obtain
\begin{align*}
\sup_{t\in [0,T]}\int_{B}|\frac {\psi} {\psi_\infty}|^{p}\psi_{\infty}dR+\frac {p-1} {p} \int_{0}^{T}\int_{B}|\nabla_R(\frac {\psi} {\psi_\infty})^{\frac p 2}|^{2}\psi_{\infty}dRdt\leq Ce^{C\int_{0}^{T}A^2(t)dt}\int_{B}|\frac {\psi_0} {\psi_\infty}|^{p}\psi_{\infty}dR.
\end{align*}
\end{lemm}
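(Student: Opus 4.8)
The plan is to reduce \eqref{eq3} to an abstract parabolic evolution equation on the Hilbert space $\mathcal{L}^2$ and then bootstrap to $\mathcal{L}^p$ via the energy estimate. First I would rewrite the operator in divergence form: observe that $\frac12\psi_\infty\nabla_R\frac{\psi}{\psi_\infty} = \frac12\nabla_R\psi + \frac12\nabla_R\mathcal{U}\,\psi$, so the equation is a genuine Fokker--Planck equation with a bounded (in time), smooth drift $-A(t)R + \frac12\nabla_R\mathcal{U}$ perturbed by the potential part. The boundary condition $\psi_\infty\nabla_R\frac{\psi}{\psi_\infty}\cdot n=0$ is precisely the no-flux condition that makes the operator symmetric with respect to the weight $\psi_\infty^{-1}$; after the change of unknown $\phi = \psi/\psi_\infty$ one gets a weighted diffusion $\partial_t\phi = \frac{1}{2\psi_\infty}\,\mathrm{div}_R(\psi_\infty\nabla_R\phi) - \frac{1}{\psi_\infty}\mathrm{div}_R(A(t)R\,\psi_\infty\phi)$ with homogeneous Neumann-type condition, which is the form in which the energy identity is transparent.

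The key steps, in order, are: (i) For existence, fix $A(\cdot)\in C([0,T])$ and run a Galerkin approximation in $\mathcal{L}^2$ using the eigenfunctions of the self-adjoint operator $\phi\mapsto -\frac{1}{2\psi_\infty}\mathrm{div}_R(\psi_\infty\nabla_R\phi)$ (whose domain encodes the no-flux condition); the drift term $A(t)R\cdot$ is a lower-order perturbation, bounded on $B$, so standard parabolic theory gives a unique $\phi\in C([0,T];\mathcal{L}^2)\cap L^2(0,T;H^1_{\psi_\infty})$. (ii) Derive the $\mathcal{L}^p$ energy estimate: multiply the equation by $p\,|\phi|^{p-2}\phi\,\psi_\infty$ and integrate over $B$. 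The diffusion term yields $-\frac{4(p-1)}{p}\int_B |\nabla_R|\phi|^{p/2}|^2\psi_\infty\,dR$ after the standard manipulation $|\phi|^{p-2}\phi\,\mathrm{div}_R(\psi_\infty\nabla_R\phi)\leftrightarrow (p-1)|\phi|^{p-2}|\nabla_R\phi|^2\psi_\infty$ and $|\nabla_R|\phi|^{p/2}|^2 = \frac{p^2}{4}|\phi|^{p-2}|\nabla_R\phi|^2$; the boundary terms vanish by the no-flux condition. The drift term $\int_B |\phi|^{p-2}\phi\,\mathrm{div}_R(A(t)R\psi_\infty\phi)\,dR$ is integrated by parts and split, using $|\mathrm{div}_R(A(t)R\psi_\infty)|\lesssim |A(t)|\,\psi_\infty$ (here $\mathrm{tr}\,A$ and $R\cdot\nabla_R\mathcal U$ combine; note $R\cdot\nabla_R\mathcal{U}\,\psi_\infty$ is integrable against $|\phi|^p$ because $\nabla_R\mathcal{U}$ grows like $(1-|R|^2)^{-1}$ but is absorbed by the Dirichlet-type vanishing of $\psi$ and Lemma \ref{Lemma3}-type control is not even needed at this stage), and absorbed into the diffusion via Young's inequality $ab \le \frac{p-1}{p}\delta a^2 + C_\delta b^2$ applied to $\nabla_R|\phi|^{p/2}$ against $|A(t)|\,|\phi|^{p/2}$. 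This gives $\frac{d}{dt}\|\phi\|_{\mathcal{L}^p}^p + \frac{p-1}{p}\int_B|\nabla_R|\phi|^{p/2}|^2\psi_\infty\,dR \le C A^2(t)\|\phi\|_{\mathcal{L}^p}^p$. (iii) Apply Gr\"onwall to get the stated bound; the $L^2_t$ dissipation term comes along for free by integrating the differential inequality in time. (iv) Uniqueness: by linearity, apply the $p=2$ estimate to the difference of two solutions with zero data.

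The main obstacle I anticipate is making the drift/potential term rigorous near $\partial B$: $\nabla_R\mathcal{U}$ is singular at $|R|=1$, so the manipulations $\int_B |\phi|^{p-2}\phi\,\mathrm{div}_R(A(t)R\psi_\infty\phi)\,dR = -(p-1)\int_B |\phi|^{p-2}\nabla_R\phi\cdot A(t)R\,\psi_\infty\phi\,dR$ and the vanishing of boundary integrals need justification. The clean way is to work with the weak formulation from step (i) directly, test with truncations $|\phi|^{p-2}\phi$ cut off away from the boundary (legitimate since $\phi \in L^\infty_t\mathcal{L}^2$ and the Galerkin solutions are smooth), and pass to the limit using the fact established in the Remark after \eqref{eq1} that $\psi=0$ on $\partial B(0,1)$, together with the monotone/dominated convergence theorem; the weight $\psi_\infty=(1-|R|^2)^k$ degenerates at the boundary in exactly the way that kills the would-be boundary contributions. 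Everything else is a routine energy computation, and for the existence half one may alternatively simply quote the parabolic theory underlying \cite{Masmoudi2008,Masmoudi2013} since the operator is the same. I would also remark that the hypothesis $p\ge 2$ is used only to ensure $|\phi|^{p-2}\phi$ is an admissible test function (it is $C^1$) and that $\psi_0\in\mathcal L^p\subset\mathcal L^2$.
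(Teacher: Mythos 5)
The paper does not prove this lemma---it is quoted verbatim from the authors' earlier work \cite{Luo-Yin-NA}---so there is no in-paper proof to compare your proposal against. Your plan (change of unknown $\phi=\psi/\psi_\infty$, $\mathcal{L}^2$ Galerkin existence, $\mathcal{L}^p$ energy estimate via the test function $p|\phi|^{p-2}\phi\,\psi_\infty$, Gr\"onwall, linearity for uniqueness) is the standard route for this type of degenerate Fokker--Planck equation with a maximally dissipative self-adjoint part, and it is what one would expect the cited reference to do. The overall structure is sound.

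One concrete error in the details: the claimed pointwise bound $|\mathrm{div}_R(A(t)R\psi_\infty)|\lesssim |A(t)|\,\psi_\infty$ is false near $\partial B$. Since $\nabla_R\psi_\infty = -\nabla_R\mathcal U\,\psi_\infty$ with $\nabla_R\mathcal U = \frac{2kR}{1-|R|^2}$, one has
\begin{align*}
\mathrm{div}_R\bigl(A(t)R\,\psi_\infty\bigr) \;=\; \bigl(\mathrm{tr}\,A\bigr)\psi_\infty \;-\; \frac{2k\,R\cdot A(t)R}{1-|R|^2}\,\psi_\infty,
\end{align*}
and the second term scales like $(1-|R|^2)^{k-1}$, not $(1-|R|^2)^k$. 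It is integrable (for $k>0$), as you note, but it is not dominated by $\psi_\infty$, so the split via the product rule does not close with a bound $CA^2(t)\|\phi\|_{\mathcal L^p}^p$. The fix is to not split at all: after a single integration by parts the drift contribution is
\begin{align*}
p(p-1)\int_B |\phi|^{p-2}\phi\,\nabla_R\phi\cdot A(t)R\,\psi_\infty\,dR
\;=\; 2(p-1)\int_B |\phi|^{\frac p 2}\,\nabla_R\bigl(|\phi|^{\frac p 2}\bigr)\cdot A(t)R\,\psi_\infty\,dR,
\end{align*}
which contains only $\psi_\infty$ (no derivative of $\psi_\infty$) and is absorbed into the diffusion by Young's inequality using $|R|\le 1$, exactly as you intend. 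This is the version you should keep; delete the parenthetical about $\mathrm{div}_R(A R\psi_\infty)$. Relatedly, your dissipation coefficient ``$\frac{4(p-1)}{p}$'' overlooks the factor $\frac12$ in front of the diffusion: the raw coefficient is $\frac{2(p-1)}{p}$, and after splitting off part of it to absorb the drift one lands on $\frac{p-1}{p}$, matching the statement. These are calibration issues, not conceptual gaps. The boundary-term justification you sketch (cutoffs, $\psi_\infty\to 0$ on $\partial B$, density of smooth Galerkin approximations) is appropriate and is the genuinely delicate part; the rest is routine.
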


First, we prove a priori estimate for the Fokker-Planck equation.
\begin{lemm}\label{estimate1}
Let $s>1+max(\frac d p,\frac 1 2)$, $s-2\leq\sigma\leq s-1$, $2\leq p\leq r<\infty$. Assume that $\psi_0\in B^{\sigma}_{p,r}(\mathcal{L}^p)$, $f,g\in L^2_{T}(B^{\sigma}_{p,r}(\mathcal{L}^p))$ and $v\in L^{\infty}_T(B^{s}_{p,r})$ with $div~v=0$. If $\psi$ is a solution of
\begin{align}\label{eq4}
\left\{
\begin{array}{ll}
\psi_t+(v\cdot\nabla)\psi=div_{R}[-\sigma(v)\cdot{R}\psi+\frac {1} {2}\psi_{\infty}\nabla_{R}\frac{\psi}{\psi_{\infty}}+g]+f,  ~~~~~~~\\[1ex]
\psi|_{t=0}=\psi_0, \\[1ex]
\psi_{\infty}\nabla_{R}\frac{\psi}{\psi_{\infty}}\cdot{n}=0 ~~~~ \text{on} ~~~~ \partial B(0,1) .\\[1ex]
\end{array}
\right.
\end{align}
Then we have
\begin{align*}
\|\psi\|_{\tilde{L}_{T}^{\infty}(B^{\sigma}_{p,r}(\mathcal{L}^p))}+\frac 1 2 \|\psi\|_{\tilde{E}^{\sigma}_{p,r}(T)}\leq Ce^{CV(t)}\big(\|\psi_0\|_{B^{\sigma}_{p,r}(\mathcal{L}^p)}+(\int_{0}^{T} \|f(t)\|^{2}_{B^{\sigma}_{p,r}(\mathcal{L}^p)}+\|g(t)\|^{2}_{B^{\sigma}_{p,r}(\mathcal{L}^p)}dt)^{\frac 1 2} \big),
\end{align*}
with $V(t)=\int_{0}^{T}(\|v\|^2_{B^{s}_{p,r}}+1)dt$.
\end{lemm}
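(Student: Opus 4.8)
The plan is to apply the Littlewood-Paley operator $\Delta_j$ to \eqref{eq4}, derive an equation for $\Delta_j\psi$ with a commutator remainder, and then run the weighted $\mathcal{L}^p$ energy estimate of Lemma \ref{weak solution} on each dyadic block, finally summing in $\ell^r$ with the weights $2^{j\sigma}$. Concretely, writing $\psi_j=\Delta_j\psi$, one gets
$$
\partial_t\psi_j+(v\cdot\nabla)\psi_j=\mathrm{div}_R\Big[-\sigma(v)\cdot R\,\psi_j+\tfrac12\psi_\infty\nabla_R\tfrac{\psi_j}{\psi_\infty}+\Delta_j g\Big]+\Delta_j f+R_j,
$$
where $R_j=[v\cdot\nabla,\Delta_j]\psi$ is the transport commutator and the term $\mathrm{div}_R[-\sigma(v)\cdot R\,\psi_j]$ picks up an extra commutator-in-$x$ piece $\mathrm{div}_R[(\sigma(v)-S_{j-1}\sigma(v))\cdot R\,\psi_j+\dots]$ that I will absorb either into $R_j$-type bounds or treat directly. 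Multiplying by $|\psi_j/\psi_\infty|^{p-2}(\psi_j/\psi_\infty)$, integrating over $B$ in $R$ and over $\mathbb{R}^d$ in $x$, the transport term $(v\cdot\nabla)\psi_j$ integrates to zero since $\mathrm{div}\,v=0$; the Fokker-Planck dissipation term yields $-\tfrac{2(p-1)}{p^2}\int\psi_\infty|\nabla_R(\psi_j/\psi_\infty)^{p/2}|^2$ as in Lemma \ref{weak solution}; and the drag term $\mathrm{div}_R[-\sigma(v)\cdot R\,\psi_j]$ is controlled by $C\|\nabla v\|_{L^\infty}\|\psi_j\|^p_{L^p_x(\mathcal{L}^p)}$ plus a small multiple of the dissipation via Young's inequality (this is exactly the mechanism already exploited to produce the $A^2(t)$ term in Lemma \ref{weak solution}). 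The source terms $\Delta_j f$, $\mathrm{div}_R\Delta_j g$ and the commutator $R_j$ are paired against $|\psi_j/\psi_\infty|^{p-2}(\psi_j/\psi_\infty)$ and bounded by Hölder in $\mathcal{L}^p/\mathcal{L}^{p'}$; the $\mathrm{div}_R$ falling on $g$ is moved onto the test function by integration by parts (the boundary term vanishes by the flux condition), producing $\nabla_R(\psi_j/\psi_\infty)^{p/2}$ which is again absorbed into the dissipation, leaving a contribution controlled by $\|\Delta_j g\|_{L^p_x(\mathcal{L}^p)}$.

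After dividing by $\|\psi_j\|_{L^p_x(\mathcal{L}^p)}^{p-1}$ in the usual way, this gives a differential inequality of the schematic form
$$
\frac{d}{dt}\|\psi_j\|_{L^p_x(\mathcal{L}^p)}+c\,\|\psi_j\|_{L^p_x(\mathcal{L}^p)}^{1-p}\!\!\int_{\mathbb{R}^d\times B}\!\psi_\infty\big|\nabla_R(\tfrac{\psi_j}{\psi_\infty})^{p/2}\big|^2
\lesssim \|\nabla v\|_{L^\infty}\|\psi_j\|_{L^p_x(\mathcal{L}^p)}+\|\Delta_j f\|_{L^p_x(\mathcal{L}^p)}+\|\Delta_j g\|_{L^p_x(\mathcal{L}^p)}+\|R_j\|_{L^p_x(\mathcal{L}^p)}.
$$
Integrating in time, multiplying by $2^{j\sigma}$, taking the $\ell^r(\mathbb{Z})$ norm, and using Lemma \ref{new estimate} to bound $\big\|(2^{j\sigma}\|R_j\|_{L^p_x(\mathcal{L}^p)})_j\big\|_{\ell^r}$ by $C\|\nabla v\|_{B^{s-1}_{p,r}}\|\psi\|_{B^\sigma_{p,r}(\mathcal{L}^p)}$ (valid since $s-1>\max(\frac dp,\frac12)\ge$ the relevant threshold, and $\sigma\le s-1$), together with $\|\nabla v\|_{L^\infty}\lesssim\|v\|_{B^s_{p,r}}$ and $\|\nabla v\|_{B^{s-1}_{p,r}}\lesssim\|v\|_{B^s_{p,r}}$, one obtains
$$
\|\psi\|_{\tilde L^\infty_t(B^\sigma_{p,r}(\mathcal{L}^p))}+\tfrac12\|\psi\|_{\tilde E^\sigma_{p,r}(t)}
\le \|\psi_0\|_{B^\sigma_{p,r}(\mathcal{L}^p)}+C\!\int_0^t\!(\|v\|_{B^s_{p,r}}+1)\|\psi\|_{\tilde L^\infty_{t'}(B^\sigma_{p,r}(\mathcal{L}^p))}dt'+C\!\int_0^t\!(\|\Delta_j f\|+\|\Delta_j g\|)\cdots,
$$
and the source integral $\int_0^t(\|f\|_{B^\sigma_{p,r}(\mathcal{L}^p)}+\|g\|_{B^\sigma_{p,r}(\mathcal{L}^p)})\,dt'$ is dominated by $(\int_0^T\|f\|^2+\|g\|^2\,dt)^{1/2}$ by Cauchy-Schwarz on $[0,T]$. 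Gronwall's inequality then yields the stated bound with $V(t)=\int_0^T(\|v\|^2_{B^s_{p,r}}+1)\,dt$ (the square appearing because the drag/commutator contributions enter as $\|v\|_{B^s_{p,r}}$ and one more application of Cauchy-Schwarz, or directly because the worst term from the drag enters like $A^2(t)$ as in Lemma \ref{weak solution}).

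The main obstacle is the careful bookkeeping of the $\mathcal{L}^p$ energy estimate at the dyadic level: one must verify that, after localization, the drag term $\mathrm{div}_R[-\sigma(v)\cdot R\,\Delta_j\psi]$ and the $\mathrm{div}_R\Delta_j g$ term genuinely produce only (i) a harmless factor $\|\nabla v\|_{L^\infty}$ (resp. $\|\Delta_j g\|$) times $\|\psi_j\|^p$, and (ii) a fraction of the good dissipation term $\int\psi_\infty|\nabla_R(\psi_j/\psi_\infty)^{p/2}|^2$ that can be absorbed into the left side while keeping the constant $\tfrac12$ in front of $\|\psi\|_{\tilde E^\sigma_{p,r}}$ — this is exactly where the structure of the FENE potential and the flux boundary condition enter, and is handled by the computation underlying Lemma \ref{weak solution}. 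A secondary technical point is justifying the integration by parts and the vanishing of boundary terms for $\Delta_j\psi$ (rather than $\psi$), which follows since $\Delta_j$ acts only in $x$ and hence commutes with $\mathrm{div}_R$ and with the boundary condition in $R$; and one should note the restriction $p\le r$ is used (as in the definition of $\tilde E^\sigma_{p,r}$ and as flagged in Remark \ref{remark}) only to make the $\ell^r$-summation of the $\tilde E$-norm meaningful — the $\tilde L^\infty$ estimate itself needs only $2\le p,r<\infty$.
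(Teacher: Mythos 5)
Your overall strategy — apply $\Delta_j$, run the weighted $\mathcal{L}^p$ energy estimate of Lemma \ref{weak solution} on each dyadic block along the flow of $v$, use Lemma \ref{new estimate} for the transport commutator, sum in $\ell^r$ and close with Gronwall — is exactly what the paper does. However, two pieces of your bookkeeping do not match the mechanism you invoke, and both matter for getting the stated $V(t)$.

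First, you split the localized drag term as $\sigma(v)\cdot R\,\Delta_j\psi$ plus an ``extra commutator-in-$x$'' $[\Delta_j,\sigma(v)\cdot R]\psi$ that you promise to absorb but never estimate. There is no lemma in the paper for this drag commutator, and it is not of the same form as the transport commutator $R_j$ covered by Lemma \ref{new estimate}, so ``absorb into $R_j$-type bounds'' is not automatic. The paper avoids this entirely: it keeps $\Delta_j(\sigma(v)\cdot R\psi)$ intact in the localized equation \eqref{eq5}, integrates the $\mathrm{div}_R$ by parts, and after Young's inequality bounds the resulting term by $C\|\Delta_j(\sigma(v)\cdot R\psi)\|^2_{L^p_x(\mathcal{L}^p)}\|\Delta_j\psi\|^{p-2}_{L^p_x(\mathcal{L}^p)}$, which is then summed using the product law of Lemma \ref{product}(2) to give $\|\sigma(v)\cdot R\psi\|^2_{B^\sigma_{p,r}(\mathcal{L}^p)}\lesssim\|v\|^2_{B^s_{p,r}}\|\psi\|^2_{B^\sigma_{p,r}(\mathcal{L}^p)}$. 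No commutator estimate for the drag is ever needed. You should adopt this route.

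Second, your schematic differential inequality has the drag contribution entering as $\|\nabla v\|_{L^\infty}\|\psi_j\|_{L^p_x(\mathcal{L}^p)}$, i.e.\ \emph{linearly} in $\nabla v$. That is inconsistent with the mechanism you yourself cite: putting $\mathrm{div}_R$ on the test function and absorbing a fraction of the dissipation via Young's inequality necessarily produces the \emph{square}, $\|\nabla v\|^2_{L^\infty}\|\psi_j\|^p$ (this is exactly how $A^2(t)$ arises in Lemma \ref{weak solution}). If one took your displayed inequality at face value, Gronwall would yield $V(t)=\int_0^T(\|v\|_{B^s_{p,r}}+1)\,dt$ rather than the quadratic $V(t)=\int_0^T(\|v\|^2_{B^s_{p,r}}+1)\,dt$ of the statement. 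The same squaring applies to the $\Delta_j g$ source after integrating $\mathrm{div}_R$ by parts, so $g$ contributes $\|\Delta_j g\|^2\|\Delta_j\psi\|^{p-2}$, not $\|\Delta_j g\|\|\Delta_j\psi\|^{p-1}$. Relatedly, rather than dividing through by $\|\psi_j\|^{p-1}$ to get a differential inequality for the norm, it is cleaner (and what the paper does) to multiply the $p$-th-power inequality by $\|\Delta_j\psi\|^{2-p}_{L^p_x(\mathcal{L}^p)}$, integrate in time to obtain a bound on $\|\Delta_j\psi(t)\|^2_{L^p_x(\mathcal{L}^p)}$, and then multiply by $2^{2j\sigma}$ and take $\ell^{r/2}$: this keeps both the drag term and the commutator term in a form compatible with a single Gronwall step and yields the $\tilde L^\infty_T$ bound; the $\tilde E^\sigma_{p,r}$ bound then follows by multiplying the original $p$-th-power inequality by $2^{2pj\sigma}$ and taking $\ell^{r/p}$, which is where the hypothesis $p\le r$ is used.
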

\begin{proof}
Applying $\Delta_j$ to \eqref{eq4}, we get
\begin{align}\label{eq5}
\left\{
\begin{array}{ll}
\partial_t\Delta_j\psi+(v\cdot\nabla)\Delta_j\psi=div_{R}[-\Delta_j(\sigma(v)\cdot{R}\psi)+\frac {1} {2}\psi_{\infty}\nabla_{R}\Delta_j\frac{\psi}{\psi_{\infty}}+\Delta_jg]+\Delta_jf+R_j,  ~~~~~~~\\[1ex]
\Delta_j\psi|_{t=0}=\Delta_j\psi_0, \\[1ex]
\psi_{\infty}\nabla_{R}\Delta_j\frac{\psi}{\psi_{\infty}}\cdot{n}=0 ~~~~ \text{on} ~~~~ \partial B(0,1) ,\\[1ex]
\end{array}
\right.
\end{align}
where $R_j=[v\cdot\nabla, \Delta_j]\psi$. We can define the flow $\Phi(t,x)$ of $v\in L^{\infty}_T(C^{0,1})$ such that
\begin{align}
\left\{
\begin{array}{ll}
\partial_t\Phi(t,x)=v(t,(\Phi(t,x))), \\[1ex]
\Phi(t,x)|_{t=0}=x.
\end{array}
\right.
\end{align}
Denote $\widetilde{A}(t,x,R)=A(t,\Phi(t,x),R)$ for any function A(t,x,R). Then we get the following equivalent equation
\begin{align}\label{eq6}
\left\{
\begin{array}{ll}
\partial_t\widetilde{\Delta_j\psi}=div_{R}[-\widetilde{\Delta_j(\sigma(v)\cdot{R}\psi)}+\frac {1} {2}\psi_{\infty}\nabla_{R}\widetilde{\Delta_j\frac{\psi}{\psi_{\infty}}}+\widetilde{\Delta_jg}]+\widetilde{\Delta_jf}+\widetilde{R_j},  ~~~~~~~\\[1ex]
\widetilde{\Delta_j\psi}|_{t=0}=\Delta_j\psi_0, \\[1ex]
\psi_{\infty}\nabla_{R}\widetilde{\Delta_j\frac{\psi}{\psi_{\infty}}}\cdot{n}=0 ~~~~ \text{on} ~~~~ \partial B(0,1) .\\[1ex]
\end{array}
\right.
\end{align}
Multiplying $sgn(\widetilde{\Delta_j\psi})|\frac {\widetilde{\Delta_j\psi}}{\psi_{\infty}}|^{p-1}$ to both sides of \eqref{eq6}. Since $div~v=0$, after some simple calculations, we obtain
\begin{align*}
     &\frac 1 p \partial_t\int_{R^d\times B}|\frac {\Delta_j\psi} {\psi_\infty}|^{p}\psi_{\infty}dxdR+\frac {p-1} {p^2} \int_{R^d\times B}|\nabla_R(\frac {\Delta_j\psi} {\psi_\infty})^{\frac p 2}|^{2}\psi_{\infty}dxdR\leq      \\
     & C\int_{R^d\times B}\big[\big(\Delta_j(\sigma(v)\cdot{R}\psi)\big)^2+|\Delta_jg|^2\big]|\frac {\Delta_j\psi} {\psi_\infty}|^{p-2}\psi_{\infty}dxdR+\int_{R^d\times B}|R_j+\Delta_jf||\frac {\Delta_j\psi} {\psi_\infty}|^{p-1}\psi_{\infty}dxdR.  \notag
\end{align*}
Using H\"{o}lder's inequality, we deduce that
\begin{align}\label{ineq1}
\partial_t\|\Delta_j\psi\|^{p}_{L^p_x(\mathcal{L}^p)}+\frac {1} {2} \int_{R^d\times B}|\nabla_R(\frac {\Delta_j\psi} {\psi_\infty})^{\frac p 2}|^{2}\psi_{\infty}dxdR\leq C[(\|\Delta_jf\|_{L^p_x(\mathcal{L}^p)}+\|R_j\|_{L^p_x(\mathcal{L}^p)})\|\Delta_j\psi\|^{p-1}_{L^p_x(\mathcal{L}^p)}   \notag
\\
+(\|\Delta_j(\sigma(v)\cdot{R}\psi)\|^2_{L^p_x(\mathcal{L}^p)}+\|\Delta_jg\|^2_{L^p_x(\mathcal{L}^p)})\|\Delta_j\psi\|^{p-2}_{L^p_x(\mathcal{L}^p)}].
\end{align}
Multiplying $\|\Delta_j\psi\|^{2-p}_{L^p_x(\mathcal{L}^p)}$ to \eqref{ineq1} and integrating over $[0,t]$, we obtain
\begin{align}\label{ineq2}
\|\Delta_j\psi\|^{2}_{L^p_x(\mathcal{L}^p)}\leq \|\Delta_j\psi_0\|^{2}_{L^p_x(\mathcal{L}^p)}+ C\int_{0}^{t}(\|\Delta_jf\|_{L^p_x(\mathcal{L}^p)}+\|R_j\|_{L^p_x(\mathcal{L}^p)})\|\Delta_j\psi\|_{L^p_x(\mathcal{L}^p)}   \notag
\\
+\|\Delta_j(\sigma(v)\cdot{R}\psi)\|^2_{L^p_x(\mathcal{L}^p)}+\|\Delta_jg\|^2_{L^p_x(\mathcal{L}^p)}dt'.
\end{align}
Applying Lemma \ref{new estimate}, we have
\begin{align}\label{ineq3}
\Big\|(2^{j\sigma}\|R_j\|_{L^p_{x}(\mathcal{L}^p)})_j\Big\|_{l^r(\mathbb{Z})}\leq C\|\nabla v\|_{B^{s-1}_{p,r}}\|\psi\|_{B^\sigma_{p,r}(\mathcal{L}^p)}.
\end{align}
Multiplying both sides of \eqref{ineq2} by $2^{2j\sigma}$, taking $L^{\infty}_T$ and $l^{\frac r 2}$ norm (for $r\geq 2$) and using \eqref{ineq3} and (2) of Lemma \ref{product} with $s_1=\sigma,\ s_2=s-1$, we get
\begin{align*}
\|\psi\|^{2}_{\tilde{L}_T^{\infty}({B^{\sigma}_{p,r}(\mathcal{L}^p)})}&\leq \|\psi_0\|^{2}_{L^p_x(\mathcal{L}^p)}+ C\int_{0}^{T}(\|f\|_{{B^{\sigma}_{p,r}(\mathcal{L}^p)}}+\|R_j\|_{{B^{\sigma}_{p,r}(\mathcal{L}^p)}})\|\psi\|_{{B^{\sigma}_{p,r}(\mathcal{L}^p)}}  \\
&+\|(\sigma(v)\cdot{R}\psi)\|^2_{{B^{\sigma}_{p,r}(\mathcal{L}^p)}}+\|g\|^2_{{B^{\sigma}_{p,r}(\mathcal{L}^p)}}dt  \\
&\leq  \|\psi_0\|^{2}_{L^p_x(\mathcal{L}^p)}+ C\int_{0}^{T}\|f\|^2_{{B^{\sigma}_{p,r}(\mathcal{L}^p)}}+\|g\|^2_{{B^{\sigma}_{p,r}(\mathcal{L}^p)}}
+(1+\|\nabla v\|^2_{{B^{s-1}_{p,r}(\mathcal{L}^p)}})\|\psi\|^{2}_{\tilde{L}_t^{\infty}({B^{\sigma}_{p,r}(\mathcal{L}^p)})}dt.
\end{align*}
Applying Gronwall's inequality, we deduce that
\begin{align}\label{ineq4}
\|\psi\|_{\tilde{L}_{T}^{\infty}(B^{\sigma}_{p,r}(\mathcal{L}^p))}\leq Ce^{CV(t)}\big(\|\psi_0\|_{B^{\sigma}_{p,r}(\mathcal{L}^p)}+(\int_{0}^{T} \|f(t)\|^{2}_{B^{\sigma}_{p,r}(\mathcal{L}^p)}+\|g(t)\|^{2}_{B^{\sigma}_{p,r}(\mathcal{L}^p)}dt)^{\frac 1 2} \big),
\end{align}
with $V(t)=\int_{0}^{T}(\|v\|^2_{B^{s}_{p,r}}+1)dt$. Integrating \eqref{ineq1} over $[0,T]$, we have
\begin{align}\label{ineq5}
\frac {1} {2} \int_{0}^{T}\int_{R^d\times B}|\nabla_R(\frac {\Delta_j\psi} {\psi_\infty})^{\frac p 2}|^{2}\psi_{\infty}dxdRdt&\leq \|\Delta_j\psi_0\|_{L^p_x(\mathcal{L}^p)}+C\int_{0}^{T}(\|\Delta_jf\|_{L^p_x(\mathcal{L}^p)}+\|R_j\|_{L^p_x(\mathcal{L}^p)})\|\Delta_j\psi\|^{p-1}_{L^p_x(\mathcal{L}^p)}   \notag \\
&+(\|\Delta_j(\sigma(v)\cdot{R}\psi)\|^2_{L^p_x(\mathcal{L}^p)}+\|\Delta_jg\|^2_{L^p_x(\mathcal{L}^p)})\|\Delta_j\psi\|^{p-2}_{L^p_x(\mathcal{L}^p)}dt.
\end{align}
Multiplying both sides of \eqref{ineq5} by $2^{2pj\sigma}$, taking $l^{\frac r p}$ norm (for $r\geq p$) and using \eqref{ineq4}, we obtain
\begin{align*}
\frac 1 2 \|\psi\|_{\tilde{E}^{\sigma}_{p,r}(T)}\leq Ce^{CV(t)}\big(\|\psi_0\|^p_{B^{\sigma}_{p,r}(\mathcal{L}^p)}+(\int_{0}^{T} \|f(t)\|^{2}_{B^{\sigma}_{p,r}(\mathcal{L}^p)}+\|g(t)\|^{2}_{B^{\sigma}_{p,r}(\mathcal{L}^p)}dt)^{\frac 1 2} \big).
\end{align*}
We thus complete the proof.
\end{proof}

\begin{prop}\label{existence1}
Let $s>1+max(\frac d p,\frac 1 2)$, $2\leq p\leq r<\infty$. Assume that $\psi_0\in B^{s-1}_{p,r}(\mathcal{L}^p)$, and $v\in C_T(B^{s}_{p,r})$ with $div~v=0$. Then
\begin{align}\label{eq7}
\left\{
\begin{array}{ll}
\psi_t+(v\cdot\nabla)\psi=div_{R}[-\sigma(v)\cdot{R}\psi+\frac {1} {2}\psi_{\infty}\nabla_{R}\frac{\psi}{\psi_{\infty}}],  ~~~~~~~\\[1ex]
\psi|_{t=0}=\psi_0, \\[1ex]
\psi_{\infty}\nabla_{R}\frac{\psi}{\psi_{\infty}}\cdot{n}=0 ~~~~ \text{on} ~~~~ \partial B(0,1) ,\\[1ex]
\end{array}
\right.
\end{align}
has a unique solution $\psi\in C_T(B^{s-1}_{p,r}(\mathcal{L}^p))$. Moreover, we have the following estimate
\begin{align*}
\|\psi\|_{\tilde{L}_{T}^{\infty}(B^{s-1}_{p,r}(\mathcal{L}^p))}+\frac 1 2 \|\psi\|_{\tilde{E}^{s-1}_{p,r}(T)}\leq Ce^{CV(t)}\|\psi_0\|_{B^{s-1}_{p,r}(\mathcal{L}^p)},
\end{align*}
with $V(t)=\int_{0}^{T}(\|v\|^2_{B^{s}_{p,r}}+1)dt$.
\end{prop}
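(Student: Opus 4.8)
The inequality claimed is precisely the conclusion of Lemma~\ref{estimate1} with $\sigma=s-1$ and $f=g=0$, so the genuine content of the proposition is the \emph{existence} of a solution lying in $C_T(B^{s-1}_{p,r}(\mathcal{L}^p))\cap\tilde E^{s-1}_{p,r}(T)$ and its \emph{uniqueness}. I would proceed in four steps: (i) construct a weak solution in $C_T(L^p_x(\mathcal{L}^p))$ from the flow transformation and Lemma~\ref{weak solution}; (ii) recover its Besov regularity by applying Lemma~\ref{estimate1} along a smooth approximation of the data; (iii) upgrade $\tilde L^\infty_T$ to $C_T$; (iv) obtain uniqueness from an $L^p_x(\mathcal{L}^p)$ energy estimate.

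\textbf{Construction.} Since $s>1+\frac dp$, we have $v\in C_T(B^s_{p,r})\hookrightarrow C_T(C^{0,1})$ with $\mathrm{div}\,v=0$, so the flow $\Phi(t,\cdot)$ of $v$ is well defined on $[0,T]$ and is, for each $t$, a measure-preserving bi-Lipschitz homeomorphism of $\mathbb{R}^d$, jointly continuous in $(t,x)$. Setting $\widetilde\psi(t,x,R)=\psi(t,\Phi(t,x),R)$, just as in the proof of Lemma~\ref{estimate1}, turns \eqref{eq7} into the pure Fokker--Planck equation in $R$,
\[
\partial_t\widetilde\psi=\mathrm{div}_R\!\Big[-A(t,x)\cdot R\,\widetilde\psi+\tfrac12\psi_\infty\nabla_R\tfrac{\widetilde\psi}{\psi_\infty}\Big],\qquad A(t,x)=\sigma(v)\big(t,\Phi(t,x)\big),
\]
with the same boundary condition and $\widetilde\psi|_{t=0}=\psi_0$, in which $x$ is merely a parameter. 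For each $x$ the map $t\mapsto A(t,x)$ lies in $C([0,T])$ and $\int_0^T|A(t,x)|^2\,dt\le\int_0^T\|\nabla v(t)\|_{L^\infty}^2\,dt$ bounds the exponential weight in Lemma~\ref{weak solution} uniformly in $x$; that lemma then gives, for a.e.\ $x$, a unique weak solution $\widetilde\psi(\cdot,x,\cdot)\in C([0,T];\mathcal{L}^p)$ with the stated $\mathcal{L}^p$-inequality. Integrating the inequality in $x$ and undoing the change of variables produces a weak solution $\psi\in C_T(L^p_x(\mathcal{L}^p))$ of \eqref{eq7}.

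\textbf{Regularity, continuity and uniqueness.} I would then rerun the construction with the mollified data $\psi_0^n=S_n\psi_0\to\psi_0$ in $B^{s-1}_{p,r}(\mathcal{L}^p)$; for these smooth data the manipulations in the proof of Lemma~\ref{estimate1} are rigorous, so the resulting solutions satisfy
\[
\|\psi^n\|_{\tilde L^\infty_T(B^{s-1}_{p,r}(\mathcal{L}^p))}+\tfrac12\|\psi^n\|_{\tilde E^{s-1}_{p,r}(T)}\le Ce^{CV(t)}\|\psi_0^n\|_{B^{s-1}_{p,r}(\mathcal{L}^p)},
\]
uniformly in $n$. Since $\psi^n-\psi^m$ solves \eqref{eq7} with data $\psi_0^n-\psi_0^m$, the $L^p_x(\mathcal{L}^p)$ estimate of Step (i) makes $(\psi^n)$ Cauchy in $C_T(L^p_x(\mathcal{L}^p))$; interpolating with the uniform bound above yields convergence in $C_T(B^{s'}_{p,r}(\mathcal{L}^p))$ for every $s'<s-1$, which is enough to pass to the limit in every term of \eqref{eq7}. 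By uniqueness the limit is the solution $\psi$ of Step (i), and by Fatou in the frequency index together with weak-$*$ lower semicontinuity in time, $\psi\in\tilde L^\infty_T(B^{s-1}_{p,r}(\mathcal{L}^p))\cap\tilde E^{s-1}_{p,r}(T)$ with the claimed estimate. To promote $\tilde L^\infty_T$ to $C_T$ I would use the usual transport-equation argument: each $\Delta_j\psi$ is continuous in time, while $\sup_{[0,T]}\|(\mathrm{Id}-S_J)\psi\|_{B^{s-1}_{p,r}(\mathcal{L}^p)}\to0$ as $J\to\infty$ by the $\tilde L^\infty_T$ bound (this is where $r<\infty$ is used), whence $\psi\in C_T(B^{s-1}_{p,r}(\mathcal{L}^p))$. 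Uniqueness for \eqref{eq7} follows by applying the same $L^p_x(\mathcal{L}^p)$ energy estimate to the difference of two solutions with identical initial data.

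\textbf{Main difficulty.} The delicate point is the rigorous parametrized use of Lemma~\ref{weak solution}: the joint measurability and $L^p$-summability in $x$ of the family $\{\widetilde\psi(\cdot,x,\cdot)\}$, the uniformity in $x$ of its bounds, and the continuity in $t$ of $\psi$ after undoing a time-dependent change of variables. Beyond that, the passage from weak-$*$ to strong continuity in the $B^{s-1}_{p,r}(\mathcal{L}^p)$ topology is exactly where the hypothesis $r<\infty$ is essential.
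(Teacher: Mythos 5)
Your proposal follows essentially the same route as the paper's proof: the same flow transformation to reduce to a pure Fokker--Planck equation in $R$ with $x$ as a parameter, the same application of Lemma~\ref{weak solution} pointwise in $x$ to construct the solution, the same use of Lemma~\ref{estimate1} to obtain the Besov bound, and the same high-frequency/low-frequency splitting (using $r<\infty$) to upgrade $\tilde L^\infty_T$ to $C_T$, with uniqueness from linearity. The one cosmetic difference is that you interpolate a mollification layer $\psi_0^n=S_n\psi_0$ to make the application of Lemma~\ref{estimate1} rigorous before passing to the limit by Fatou, whereas the paper applies Lemma~\ref{estimate1} directly to the constructed solution; this is extra care within the same argument rather than a different method.
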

\begin{proof}
Similar as in Lemma \ref{estimate1}, we denote $\widetilde{A}(t,x,R)=A(t,\Phi(t,x),R)$ for any function $A(t,x,R)$. Then we get the following equivalent equation
\begin{align}
\left\{
\begin{array}{ll}
\partial_t\widetilde{\psi}=div_{R}[-\widetilde{(\sigma(v)\cdot{R}\psi)}+\frac {1} {2}\psi_{\infty}\nabla_{R}\widetilde{\frac{\psi}{\psi_{\infty}}}],  ~~~~~~~\\[1ex]
\widetilde{\psi}|_{t=0}=\psi_0, \\[1ex]
\psi_{\infty}\nabla_{R}\widetilde{\frac{\psi}{\psi_{\infty}}}\cdot{n}=0 ~~~~ \text{on} ~~~~ \partial B(0,1) .\\[1ex]
\end{array}
\right.
\end{align}
Applying Lemma \ref{weak solution} for each fixed x, we deduce that the existence and uniqueness of $\psi(t,x,R)\in C([0,T];\mathcal{L}^p)$. By Lemma \ref{estimate1}, we get the following estimate
\begin{align*}
\|\psi\|_{\tilde{L}_{T}^{\infty}(B^{s-1}_{p,r}(\mathcal{L}^p))}+\frac 1 2 \|\psi\|_{\tilde{E}^{s-1}_{p,r}(T)}\leq Ce^{CV(t)}\|\psi_0\|_{B^{s-1}_{p,r}(\mathcal{L}^p)}.
\end{align*}
Since the equation \eqref{eq7} is a linear equation, we deduce that solution $\psi(t,x,R)\in L^{\infty}_T(B^{s-1}_{p,r}(\mathcal{L}^p))$ is unique.
We only need to check $\psi\in C_T(B^{s-1}_{p,r}(\mathcal{L}^p)).$ By a similar calculation as in Lemma \ref{estimate1} with $f=g=0$, we deduce that
\begin{align}\label{C}
\partial_t\|\Delta_j\psi\|^{2}_{L^p_x(\mathcal{L}^p)}&\leq C(\|R_j\|_{L^p_x(\mathcal{L}^p)}\|\Delta_j\psi\|_{L^p_x(\mathcal{L}^p)}
+\|\Delta_j(\sigma(v)\cdot{R}\psi)\|^2_{L^p_x(\mathcal{L}^p)})  \notag\\
&\leq Cc^2_j 2^{-2j(s-1)}(\|v\|^2_{B^{s}_{p,r}}+1)\|\psi\|^2_{B^{s-1}_{p,r}(\mathcal{L}^p)},
\end{align}
where $(c_j)_{j\geq-1}\in l^r$.
Since $r<\infty$ and $\psi(t,x,R)\in L^{\infty}_T(B^{s-1}_{p,r}(\mathcal{L}^p))$, for any $t_1< t_2\in [0,T]$, for any $\epsilon_1>0$, there exists N such that
$$\sup_{t\in [0,T]} (\sum_{j\geq N}2^{j(s-1)r}\|\psi\|^r_{L_x^p(\mathcal{L}^p)})^{\frac 1 r }\leq \frac {\epsilon_1} {4} .$$
Hence
\begin{align*}
\|\psi(t_1)-\psi(t_2)\|_{B^{s-1}_{p,r}(\mathcal{L}^p)}&\leq (\sum_{-1\leq j<N}2^{j(s-1)r}\|\Delta_j\psi(t_1)-\Delta_j\psi(t_2)\|^r_{L_x^p(\mathcal{L}^p)})^{\frac 1 r }+2\sup_{t\in [0,T]} (\sum_{j\geq N}2^{jsr}\|\Delta_j\psi\|^r_{L_x^p(\mathcal{L}^p)})^{\frac 1 r }   \\
&\leq (\sum_{-1\leq j<N}2^{j(s-1)r}\int_{t_1}^{t_2}\partial_t\|\Delta_j\psi\|^{r}_{L^p_x(\mathcal{L}^p)}dt')^{\frac 1 r }+\frac {\epsilon_1} {2}  \\
&\leq (\sum_{-1\leq j<N}\frac r 2 2^{j(s-1)r}\int_{t_1}^{t_2}\partial_t\|\Delta_j\psi\|^{2}_{L^p_x(\mathcal{L}^p)}
\|\Delta_j\psi\|^{r-2}_{L^p_x(\mathcal{L}^p)}dt')^{\frac 1 r }+\frac {\epsilon_1} {2}  \\
&\leq (\sum_{-1\leq j<N}\frac r 2 2^{j(s-2)r}\int_{t_1}^{t_2}c^2_j(\|v\|^2_{B^{s}_{p,r}}+1)\|\psi\|^2_{B^{s-1}_{p,r}(\mathcal{L}^p)}
\|\Delta_j\psi\|^{r-2}_{L^p_x(\mathcal{L}^p)}dt')^{\frac 1 r }+\frac {\epsilon_1} {2}  \\
&\leq C(N+1)(\int_{t_1}^{t_2}\|v\|^2_{B^{s}_{p,r}}+1dt')^{\frac 1 r }+\frac {\epsilon_1} {2}.
\end{align*}
Then we have
$$\|\psi(t_1)-\psi(t_2)\|_{B^{s-1}_{p,r}(\mathcal{L}^p)}\rightarrow 0~~ as~~t_1\rightarrow t_2,$$
from which we know that $\psi\in C_T(B^{s-1}_{p,r}(\mathcal{L}^p)).$

\end{proof}

\begin{lemm}\label{estimate2}
Let $0<\nu=\frac {\epsilon} {Re}\leq 1$, $s>1+\frac d p$, $(p-1)k>1$, $s-1\leq\sigma\leq s$, $1\leq r<\infty$, $1<p<\infty$ and $1\leq \rho\leq\infty$. Assume that $u_0\in B^{\sigma}_{p,r}$, $\psi\in \tilde{L}^{\infty}_{T}(B^{\sigma-1}_{p,r})(\mathcal{L}^p)$, $f\in \tilde{L}^{\rho}_{T}(B^{\sigma-2+\frac {2} {\rho}}_{p,r})$ and $v\in L^{\infty}_T(B^{s}_{p,r})$ with $div~v=0$. If $u$ is a solution of
\begin{align}\label{eq8}
\left\{
\begin{array}{ll}
u_t+(v\cdot\nabla)u-\frac {\epsilon} {Re}\Delta u+\nabla{P}=\frac {1-\epsilon} {Re}div~\tau+f,  ~~~~~~~div~u=0,\\[1ex]
\tau_{ij}=\epsilon\int_{B}(R_{i}\nabla_{R_j}\mathcal{U})\psi dR, \\[1ex]
u|_{t=0}=u_0, \\[1ex]
\end{array}
\right.
\end{align}
then we have
\begin{align*}
\|u\|_{\tilde{L}_{T}^{\infty}(B^{\sigma}_{p,r})}\leq Ce^{CV(t)}\big(\|u_0\|_{B^{\sigma}_{p,r}}+\nu^{\frac 1 \rho -1}
\|f\|_{\tilde{L}^{\rho}_{T}(B^{\sigma-2+\frac 2 \rho}_{p,r})}+\|\psi\|_{\tilde{L}^{\infty}_{T}(B^{\sigma-1}_{p,r})(\mathcal{L}^p)}\big),
\end{align*}
with $V(t)=\int_{0}^{T}(\|v\|^2_{B^{s}_{p,r}}+1)dt$.
Moreover, if $f=0$ and $\sigma=s$, then we have $u\in C_T(B^{s}_{p,r})$.
\end{lemm}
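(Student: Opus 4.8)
The plan is to eliminate the pressure, rewrite \eqref{eq8} as a transport--diffusion equation of the form \eqref{tran-diff}, and then apply Lemma \ref{u} after splitting the right-hand side into three pieces: a forcing piece, an extra-stress piece, and a pressure/transport-correction piece. Since $\mathrm{div}~u=0$, taking the divergence of the momentum equation in \eqref{eq8} gives $\Delta P=-\mathrm{div}((v\cdot\nabla)u)+\frac{1-\epsilon}{Re}\mathrm{div}~\mathrm{div}~\tau+\mathrm{div}~f$, so $\nabla P=\nabla(-\Delta)^{-1}\mathrm{div}\big((v\cdot\nabla)u-\frac{1-\epsilon}{Re}\mathrm{div}~\tau-f\big)$, and $u$ solves \eqref{tran-diff} with $\nu=\frac{\epsilon}{Re}$ and source $g=g_1+g_2+g_3$, where $g_1=\mathbb{P}f$, $g_2=\frac{1-\epsilon}{Re}\mathbb{P}~\mathrm{div}~\tau$, $g_3=\nabla(-\Delta)^{-1}\mathrm{div}((v\cdot\nabla)u)$, and $\mathbb{P}=\mathrm{Id}+\nabla(-\Delta)^{-1}\mathrm{div}$ is the Leray projector (bounded on every $B^\alpha_{p,r}$ precisely because $1<p<\infty$).

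The key point is the extra-stress piece. Since $\nabla_R\mathcal{U}=\frac{2kR}{1-|R|^2}$, the kernel $R_i\nabla_{R_j}\mathcal{U}$ is bounded by $C/z$ with $z=1-|R|$, so Lemma \ref{Lemma3} (this is where $(p-1)k>1$ is used) gives $\|\Delta_j\tau(t,x)\|\leq C\epsilon\int_B\frac{|\Delta_j\psi(t,x,R)|}{z}\,dR\leq C\epsilon\|\Delta_j\psi(t,x,\cdot)\|_{\mathcal{L}^p}$ for every $x$ and $j$, since $\Delta_j$ (acting in $x$) commutes with the $R$-integration. Hence $\|\tau\|_{B^{\alpha}_{p,r}}\leq C\epsilon\|\psi\|_{B^{\alpha}_{p,r}(\mathcal{L}^p)}$ for any $\alpha$, and therefore $\|g_2\|_{\tilde L^\infty_T(B^{\sigma-2}_{p,r})}\leq C\frac{1-\epsilon}{Re}\|\mathrm{div}~\tau\|_{\tilde L^\infty_T(B^{\sigma-2}_{p,r})}\leq C\frac{(1-\epsilon)\epsilon}{Re}\|\psi\|_{\tilde L^\infty_T(B^{\sigma-1}_{p,r}(\mathcal{L}^p))}=C(1-\epsilon)\nu\|\psi\|_{\tilde L^\infty_T(B^{\sigma-1}_{p,r}(\mathcal{L}^p))}$. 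The compensating factor $\nu=\frac{\epsilon}{Re}$ is decisive: applying the $\nu>0$ estimate of Lemma \ref{u} to $g_2$ with $\rho=\infty$ costs $(1+\nu T)\nu^{-1}$, and this $\nu^{-1}$ cancels the $\nu$ above, leaving $C(1+\nu T)(1-\epsilon)\|\psi\|_{\tilde L^\infty_T(B^{\sigma-1}_{p,r}(\mathcal{L}^p))}$, which is bounded uniformly in $\nu\in(0,1]$.

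For the remaining pieces I apply Lemma \ref{u} to $g_1$ with the given exponent $\rho$, which produces the term $\nu^{\frac1\rho-1}\|f\|_{\tilde L^\rho_T(B^{\sigma-2+2/\rho}_{p,r})}$ after bounding $(1+\nu T)^{1-1/\rho}\leq C$ using $\nu\leq 1$; and to $g_3$ with $\rho=1$, where Lemma \ref{P} (using $s>1+\frac dp$, $s-1\leq\sigma\leq s$, and $\mathrm{div}~v=\mathrm{div}~u=0$) gives $\|g_3\|_{B^\sigma_{p,r}}\leq C\|v\|_{B^s_{p,r}}\|u\|_{B^\sigma_{p,r}}$. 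Since $\sigma\leq s$ and $s>1+\frac dp$, the weight in Lemma \ref{u} satisfies $V'_p(v,t)\leq C\|v\|_{B^s_{p,r}}\leq \frac C2(\|v\|^2_{B^s_{p,r}}+1)$, so both the prefactor $e^{CV_p(v,t)}$ and the $g_3$-contribution are absorbed by Gronwall's inequality into $e^{CV(t)}$ with $V(t)=\int_0^T(\|v\|^2_{B^s_{p,r}}+1)\,dt$, and collecting the three pieces yields the stated estimate. Finally, when $f=0$ and $\sigma=s$, time continuity follows the scheme of Proposition \ref{existence1}: applying $\Delta_j$ to \eqref{eq8} and doing an $L^p$ energy estimate (the term $-\nu\Delta\Delta_j u$ contributes $\nu(p-1)\int|\nabla\Delta_j u|^2|\Delta_j u|^{p-2}\geq0$ and is dropped) gives $\partial_t\|\Delta_j u\|_{L^p}\leq \|\Delta_j g\|_{L^p}+\|[v\cdot\nabla,\Delta_j]u\|_{L^p}\leq C_j$ uniformly on $[0,T]$, using the bounds just established and Bernstein's inequality for the fixed block $j$; then splitting $\|u(t_1)-u(t_2)\|_{B^s_{p,r}}$ into low frequencies (which tend to $0$) and a high-frequency tail (small since $u\in\tilde L^\infty_T(B^s_{p,r})$ and $r<\infty$) as in Proposition \ref{existence1} gives $u\in C_T(B^s_{p,r})$.

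The main obstacle is exactly the $\nu$-uniformity: the parabolic gain for the $\mathrm{div}~\tau$ source in Lemma \ref{u} is only available at the price of a factor $\nu^{-1}$ (or $\nu^{-1/2}$), which would blow up as $\nu\to0$; the resolution is the observation that $\frac{1-\epsilon}{Re}\mathrm{div}~\tau=(1-\epsilon)\nu\,\mathrm{div}\big(\int_B R\otimes\nabla_R\mathcal{U}\,\psi\,dR\big)$ already carries a factor $\nu=\frac{\epsilon}{Re}$ (because $\tau$ itself is defined with the prefactor $\epsilon$), so the two powers of $\nu$ cancel — and controlling the singular-kernel integral $\int_B R\otimes\nabla_R\mathcal{U}\,\psi\,dR$ in $B^{\sigma-1}_{p,r}$ by $\|\psi\|_{B^{\sigma-1}_{p,r}(\mathcal{L}^p)}$ is precisely Lemma \ref{Lemma3}, which is what forces the hypothesis $(p-1)k>1$.
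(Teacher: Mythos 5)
Your proposal is correct and takes essentially the same route as the paper: eliminate the pressure via $\mathrm{div}\,u=0$, apply Lemma~\ref{u} to the resulting transport--diffusion equation with $\rho=\infty$ on the $\mathrm{div}\,\tau$ source (so the cost $(1+\nu T)\nu^{-1}$ is cancelled by the factor $\nu=\epsilon/Re$ that the prefactors $\frac{1-\epsilon}{Re}$ and $\epsilon$ in $\tau$ jointly supply), with the given $\rho$ on $f$, and with $\rho=1$ on the pressure correction $\nabla\Delta^{-1}\mathrm{div}((v\cdot\nabla)u)$ controlled by Lemma~\ref{P}, closing with Gronwall; Lemma~\ref{Lemma3} (hence $(p-1)k>1$) gives $\|\tau\|_{B^\alpha_{p,r}}\lesssim\epsilon\|\psi\|_{B^\alpha_{p,r}(\mathcal{L}^p)}$ exactly as you say. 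The only place you deviate is the final $C_T(B^s_{p,r})$ continuity: the paper first reads off $\partial_t u\in L^\infty_T(B^{s-2}_{p,r})$ from the equation, giving $u\in C_T(B^{s-2}_{p,r})$ directly, and then does the low/high frequency split, whereas you propose a blockwise $L^p$ energy estimate yielding $\partial_t\|\Delta_j u\|_{L^p}\leq C_j$. That bound alone controls the evolution of the norm of $\Delta_j u$, not $\|\Delta_j u(t_1)-\Delta_j u(t_2)\|_{L^p}$; what you really want (and what your computation does give if you keep $\partial_t\Delta_j u$ rather than $\partial_t\|\Delta_j u\|_{L^p}$) is $\partial_t\Delta_j u\in L^\infty_T(L^p)$ for each fixed $j$, after which the split works. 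This is a presentational nit rather than a gap.
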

\begin{proof}
Taking $div$ for \eqref{eq8}, we have
$$\Delta P=div(\frac {1-\epsilon} {Re}div~\tau+f-(v\cdot\nabla)u).$$
Thus $P$ is a solution of an elliptic equation and we can obtain
$$\nabla P=\nabla(\Delta)^{-1}div(\frac {1-\epsilon} {Re}div~\tau+f-(v\cdot\nabla)u).$$
We rewrite \eqref{eq8} as
\begin{align*}
u_t+(v\cdot\nabla)u-\frac {\epsilon} {Re}\Delta u=-\nabla(\Delta)^{-1}div(\frac {1-\epsilon} {Re}div~\tau+f-(v\cdot\nabla)u)+\frac {1-\epsilon} {Re}div~\tau+f.
\end{align*}
Since $\nabla(\Delta)^{-1}div$ is a Calderon-Zygmund operator and $1<p<\infty$, applying Lemma \ref{u} and Lemma \ref{P}, we deduce that
\begin{align*}
\|u\|_{\tilde{L}_{T}^{\infty}(B^{\sigma}_{p,r})}&\leq Ce^{CV_p(v,t)}\big(\|u_0\|_{B^{\sigma}_{p,r}}+\|\nabla(\Delta)^{-1}div(v\cdot\nabla u)\|_{\tilde{L}^{1}_{T}(B^{\sigma}_{p,r})}+\nu^{\frac 1 \rho -1}\|f\|_{\tilde{L}^{\rho}_{T}(B^{\sigma-2+\frac 2 \rho}_{p,r})}+\frac 1 {\epsilon} \|div~\tau\|_{\tilde{L}^{\infty}_{T}(B^{\sigma-2}_{p,r})}\big)  \\
&\leq Ce^{CV_p(v,t)}\big(\|u_0\|_{B^{\sigma}_{p,r}}+\int_{0}^{T}\|u\|_{\tilde{L}_{t}^{\infty}(B^{\sigma}_{p,r})}\|v\|_{B^{s}_{p,r}}dt+\nu^{\frac 1 \rho -1}\|f\|_{\tilde{L}^{\rho}_{T}(B^{\sigma-2+\frac 2 \rho}_{p,r})}+\frac 1 {\epsilon} \|div~\tau\|_{\tilde{L}^{\infty}_{T}(B^{\sigma-2}_{p,r})}\big).
\end{align*}
Since $V_p(v,t)\leq \int_{0}^{T}\|v\|_{B^{s}_{p,r}}dt\leq V(t)=\int_{0}^{T}(\|v\|^2_{B^{s}_{p,r}}+1)dt$, applying Gronwall's inequality, we obtain
\begin{align*}
\|u\|_{\tilde{L}_{T}^{\infty}(B^{\sigma}_{p,r})}&\leq Ce^{CV(t)}\big(\|u_0\|_{B^{\sigma}_{p,r}}+\nu^{\frac 1 \rho -1}\|f\|_{\tilde{L}^{\rho}_{T}(B^{\sigma-2+\frac 2 \rho}_{p,r})}+\frac 1 {\epsilon} \|div~\tau\|_{\tilde{L}^{\infty}_{T}(B^{\sigma-2}_{p,r})}\big).
\end{align*}
Applying Lemma \ref{Lemma3}, we have $\|\Delta_j \tau\|_{L^p}\leq\|\Delta_j \psi\|_{L^p(\mathcal{L}^p)}$. Then we can easily deduce that
$$\frac 1 {\epsilon} \|div~\tau\|_{\tilde{L}^{\infty}_{T}(B^{\sigma-2}_{p,r})}\leq \|\psi\|_{\tilde{L}^{\infty}_{T}(B^{\sigma-1}_{p,r})(\mathcal{L}^p)},$$
which implies that
\begin{align*}
\|u\|_{\tilde{L}_{T}^{\infty}(B^{\sigma}_{p,r})}\leq Ce^{CV(t)}\big(\|u_0\|_{B^{\sigma}_{p,r}}+\nu^{\frac 1 \rho -1}
\|f\|_{\tilde{L}^{\rho}_{T}(B^{\sigma-2+\frac 2 \rho}_{p,r})}+\|\psi\|_{\tilde{L}^{\infty}_{T}(B^{\sigma-1}_{p,r})(\mathcal{L}^p)}\big).
\end{align*}
Let $\sigma=s$ and $f=0$. We proved that $u\in L_{T}^{\infty}(B^{s}_{p,r})$, $\tau\in L_{T}^{\infty}(B^{s-2}_{p,r})$. Applying Lemma \ref{product}, we have $v\cdot\nabla u\in L_{T}^{\infty}(B^{s-1}_{p,r})$, $\nabla P\in L_{T}^{\infty}(B^{s-2}_{p,r})$ and $\Delta u\in L_{T}^{\infty}(B^{s-2}_{p,r})$. So from the equation \eqref{eq8}, we deduce that $\partial_t u\in L_{T}^{\infty}(B^{s-2}_{p,r})$, then $u\in C_{T}(B^{s-2}_{p,r})$. An interpolation argument ensures that $u\in C_{T}(B^{s'}_{p,r})$, for any $s'<s$. Since $r<\infty$ and $u\in L_{T}^{\infty}(B^{s}_{p,r})$, for any $t_1, t_2\in [0,T]$, for any $\epsilon_1>0$, there exists N such that
$$\sup_{t\in [0,T]} (\sum_{j\geq N}2^{jsr}\|u\|^r_{L^p})^{\frac 1 r }\leq \frac {\epsilon_1} {4} .$$
Hence
\begin{align*}
\|u(t_1)-u(t_2)\|_{B^{s}_{p,r}}&\leq (\sum_{-1\leq j<N}2^{jsr}\|u(t_1)-u(t_2)\|^r_{L^p})^{\frac 1 r }+2\sup_{t\in [0,T]} (\sum_{j\geq N}2^{jsr}\|u\|^r_{L^p})^{\frac 1 r }   \\
&\leq 2^N(\sum_{-1\leq j<N}2^{j(s-1)r}\|u(t_1)-u(t_2)\|^r_{L^p})^{\frac 1 r }+\frac {\epsilon_1} {2}  \\
&\leq 2^N\|u(t_1)-u(t_2)\|_{B^{s-1}_{p,r}}+\frac {\epsilon_1} {2} ,
\end{align*}
from which we know that $u\in C_T(B^{s}_{p,r})$.
We thus complete the proof.
\end{proof}

\section{Local well-posedness}
\par
In this section, we will investigate the local well-posedness for \eqref{eq1} with $0\leq\nu\leq 1$ in Besov spaces.
\begin{prop}\label{Local}
Let $d\geq 2$. Assume that $\nu \in [0,1]$, $s>1+max\{\frac 1 2, \frac d p\}$, $2\leq p\leq r<\infty$ and $k(p-1)>1$. Then for any $K>0$, $(u_0,\psi_0)\in B_K=\{(v,\phi)\in B^s_{p,r}\times B^{s-1}_{p,r}(\mathcal{L}^p):\|v\|_{B^s_{p,r}}+\|\phi\|_{B^{s-1}_{p,r}(\mathcal{L}^p)}\leq K, div~v=0\}$, there exists $T=T(K,s,p,r,d)>0$ such that the equation \eqref{eq1} has a unique solution $H^{\nu}(u_0,\psi_0)=(u^{\nu},\psi^{\nu})\in C_T(B^s_{p,r})\times C_T( B^{s-1}_{p,r}(\mathcal{L}^p))$. Moreover, $\psi^{\nu}\in \tilde{E}^{s-1}_{p,r}(T)$ and there exists $C=C(K,s,p,r,d)>0$ such that
\begin{align}
\|H^{\nu}(u_0,\psi_0)\|_{\tilde{L}_T^{\infty}(B^s_{p,r})\times\tilde{L}_T^{\infty}(B^{s-1}_{p,r}(\mathcal{L}^p))}\leq C, \quad \forall\nu\in[0,1].
\end{align}
Moreover, if $(u_0,\psi_0)\in B^{\gamma}_{p,r}\times B^{\gamma-1}_{p,r}(\mathcal{L}^p)$ for any $\gamma>s$, then there exists $C_1=C_1(K,\gamma,s,p,r,d)>0$ such that
\begin{align}
\|H^{\nu}(u_0,\psi_0)\|_{\tilde{L}_T^{\infty}(B^{\gamma}_{p,r})\times\tilde{L}_T^{\infty}(B^{\gamma-1}_{p,r}(\mathcal{L}^p))}\leq C_1(\|u_{0}\|_{B^{\gamma}_{p,r}}+\|\psi_{0}\|_{B^{\gamma-1}_{p,r}(\mathcal{L}^p)}), \quad \forall\nu\in[0,1].
\end{align}
\end{prop}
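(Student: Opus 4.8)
The plan is to construct the solution of \eqref{eq1} by a standard iterative scheme built on the linear estimates of Section 4. First I would define the approximate sequence $(u^n,\psi^n)$ by setting $(u^0,\psi^0)=(S_0 u_0, S_0\psi_0)$ and, given $(u^n,\psi^n)$, letting $\psi^{n+1}$ solve the Fokker--Planck equation \eqref{eq7} with transport velocity $v=u^n$ and $u^{n+1}$ solve the linear Navier--Stokes--type equation \eqref{eq8} with $v=u^n$, $f=0$, and $\tau$ built from $\psi^{n+1}$ via $\tau_{ij}=\epsilon\int_B(R_i\nabla_{R_j}\mathcal{U})\psi^{n+1}\,dR$. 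Proposition \ref{existence1} guarantees $\psi^{n+1}\in C_T(B^{s-1}_{p,r}(\mathcal{L}^p))\cap\tilde E^{s-1}_{p,r}(T)$ and Lemma \ref{estimate2} (with $\rho=1$, $f=0$, $\sigma=s$) gives $u^{n+1}\in C_T(B^s_{p,r})$, so the scheme is well defined.

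Next I would prove uniform boundedness of the sequence on a suitable time interval. Writing $E^n(T)=\|u^n\|_{\tilde L^\infty_T(B^s_{p,r})}+\|\psi^n\|_{\tilde L^\infty_T(B^{s-1}_{p,r}(\mathcal{L}^p))}+\tfrac12\|\psi^n\|_{\tilde E^{s-1}_{p,r}(T)}$, the estimates of Lemma \ref{estimate1} and Lemma \ref{estimate2} combine to give
\begin{align*}
E^{n+1}(T)\leq Ce^{C\int_0^T(\|u^n\|^2_{B^s_{p,r}}+1)\,dt}\big(\|u_0\|_{B^s_{p,r}}+\|\psi_0\|_{B^{s-1}_{p,r}(\mathcal{L}^p)}\big),
\end{align*}
where the key point is that in Lemma \ref{estimate2} the factor $\tfrac1\epsilon\|div~\tau\|_{\tilde L^\infty_T(B^{s-2}_{p,r})}$ is dominated by $\|\psi^{n+1}\|_{\tilde L^\infty_T(B^{s-1}_{p,r}(\mathcal{L}^p))}$ via Lemma \ref{Lemma3} (here the hypothesis $k(p-1)>1$ is essential), so the $\nu$-dependent coefficients never blow up as $\nu\to0$. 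Choosing $T$ small so that $Ce^{C(CK^2+1)T}\leq 2C$ and $2CK$ stays in the absorbing ball, one proves by induction $E^n(T)\leq 2CK=:M$ for all $n$, with $T$ depending only on $K,s,p,r,d$ and not on $\nu$. Since $V_p(u^n,t)\leq V(t)$ and $\nu\in[0,1]$, the same $T$ works uniformly, which is exactly the uniform bound $(2.1)$ asserted in the proposition.

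Then I would show $(u^n,\psi^n)$ is Cauchy in the weaker norm $\tilde L^\infty_T(B^{s-1}_{p,r})\times\tilde L^\infty_T(B^{s-2}_{p,r}(\mathcal{L}^p))$. Setting $\delta u^n=u^{n+1}-u^n$, $\delta\psi^n=\psi^{n+1}-\psi^n$, one subtracts the equations at levels $n+1$ and $n$; the differences satisfy transport-diffusion and Fokker--Planck equations with source terms that are products of a difference and a uniformly bounded factor, so Lemma \ref{u}, Lemma \ref{estimate1}, Lemma \ref{product}, Lemma \ref{P} and Lemma \ref{Lemma3} yield, after choosing $T$ possibly smaller, a contraction $\|\delta u^n\|_{\tilde L^\infty_T(B^{s-1}_{p,r})}+\|\delta\psi^n\|_{\tilde L^\infty_T(B^{s-2}_{p,r}(\mathcal{L}^p))}\leq\tfrac12\big(\|\delta u^{n-1}\|_{\cdots}+\|\delta\psi^{n-1}\|_{\cdots}\big)$, again with constants independent of $\nu$. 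Hence $(u^n,\psi^n)$ converges in the lower-regularity space to a limit $(u^\nu,\psi^\nu)$; interpolating with the uniform $B^s_{p,r}\times B^{s-1}_{p,r}(\mathcal{L}^p)$ bound and using Fatou's property of Besov spaces places the limit in $\tilde L^\infty_T(B^s_{p,r})\times\tilde L^\infty_T(B^{s-1}_{p,r}(\mathcal{L}^p))$, and a routine passage to the limit shows it solves \eqref{eq1}. Continuity in time for $u^\nu$ follows from Lemma \ref{estimate2} and for $\psi^\nu$ from Proposition \ref{existence1}; uniqueness follows from the same contraction argument applied to two solutions. For the higher-regularity statement $(2.2)$, once uniqueness is known one reruns Lemma \ref{estimate1} and Lemma \ref{estimate2} directly on the solution itself at regularity $\gamma$: since $V(t)=\int_0^T(\|u^\nu\|^2_{B^s_{p,r}}+1)\,dt$ is already controlled by the $B^s_{p,r}$ bound on the (same) fixed interval $T$, the linear estimates are linear in the top-order norm and give $\|(u^\nu,\psi^\nu)\|_{\tilde L^\infty_T(B^\gamma_{p,r}\times B^{\gamma-1}_{p,r}(\mathcal{L}^p))}\leq C_1(\|u_0\|_{B^\gamma_{p,r}}+\|\psi_0\|_{B^{\gamma-1}_{p,r}(\mathcal{L}^p)})$.

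The main obstacle is controlling the extra-stress term $\tfrac{1-\epsilon}{Re}\,div~\tau$ uniformly in $\nu=\epsilon/Re$: the transport-diffusion estimate of Lemma \ref{u} forces one to measure this forcing in $\tilde L^\infty_T(B^{s-2}_{p,r})$, which after dividing out the $1/\epsilon$ reduces to estimating $\psi$ in $\tilde L^\infty_T(B^{s-1}_{p,r}(\mathcal{L}^p))$ — but only the $L^2$-in-time gradient information $\|\psi\|_{\tilde E^{s-1}_{p,r}(T)}$ comes for free from the Fokker--Planck parabolicity, so one cannot afford to lose a $\nabla_R\psi$ in $L^\infty_T$. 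Lemma \ref{Lemma3}, valid precisely when $k(p-1)>1$, is what converts the pointwise weight $1/(1-|R|)$ appearing in $\nabla_R\mathcal{U}$ into a clean bound $\|\Delta_j\tau\|_{L^p}\lesssim\|\Delta_j\psi\|_{L^p_x(\mathcal{L}^p)}$, and this is the one place the structural hypothesis on $k$ and $p$ is genuinely used; everything else is the standard Besov iteration machinery.
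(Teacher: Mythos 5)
Your proposal follows essentially the same route as the paper: an iterative scheme built on Lemma~\ref{estimate1}, Lemma~\ref{estimate2} and Proposition~\ref{existence1}, uniform bounds via a smallness-of-$T$ induction with the $1/\epsilon$ cancellation in the stress term handled by Lemma~\ref{Lemma3}, a Cauchy argument in the lower-regularity space, and Fatou/interpolation to place the limit back in the target space. The only deviation is a harmless re-ordering of the iteration (you feed $\psi^{n+1}$ rather than $\psi^n$ into $\tau$, which actually yields a one-step contraction instead of the paper's two-step recurrence $A_n\lesssim 2^{-n}+\tfrac14(A_{n-1}+A_{n-2})$), and both you and the paper are equally terse about the propagation-of-higher-regularity estimate $(2.2)$, which strictly speaking needs the tame form of the transport estimate so that only $\|u\|_{B^s_{p,r}}$ enters the Gronwall exponent.
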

\begin{proof}
If $\nu=0$, since the equation \eqref{eq1} is linear and can be handled more simply, we omit the details. Let $\nu>0$. We divide it into four steps to prove Proposition \ref{Local}.

\textbf{Step one. Approximate solutions.} \\
First, we construct approximate solutions of some linear equations.   \\
Let $(u^0,\psi^0)\triangleq(S_{0}u_0,S_{0}\psi_0)$. We define a sequence $(u^n,\psi^n)_{n\in\mathbb{N}}$ by solving the following linear equations:
\begin{align}\label{eq9}
\left\{
\begin{array}{ll}
\partial_t u^{n+1}+(u^{n}\cdot\nabla)u^{n+1}-\frac {\epsilon} {Re}\Delta u^{n+1}+\nabla{P^{n+1}}=\frac {1-\epsilon} {Re}div~\tau^{n},  ~~~~~~~div~u^{n+1}=0,\\[1ex]
\partial_t\psi^{n+1}+(u^{n}\cdot\nabla)\psi^{n+1}=div_{R}[-\sigma(u^{n})\cdot{R}\psi^{n+1}+\frac {1} {2}\psi_{\infty}\nabla_{R}\frac{\psi^{n+1}}{\psi_{\infty}}],  ~~~~~~~\\[1ex]
\tau^{n}_{ij}=\epsilon\int_{B}(R_{i}\nabla_{R_j}\mathcal{U})\psi^{n} dR, \\[1ex]
u^{n+1}|_{t=0}=S_{n+1}u_0,~ \psi^{n+1}|_{t=0}=S_{n+1}\psi_0, \\[1ex]
\psi_{\infty}\nabla_{R}\frac{\psi^{n+1}}{\psi_{\infty}}\cdot{n}=0 ~~~~ \text{on} ~~~~ \partial B(0,1) .\\[1ex]
\end{array}
\right.
\end{align}
Assume that $u^n\in C_T(B^{s}_{p,r})$ and $\psi^n\in C_T(B^{s-1}_{p,r}(\mathcal{L}^p))$ for any positive T. By Lemma \ref{estimate2}, we obtain $u^{n+1}\in C_T(B^{s}_{p,r})$. According to Proposition \ref{existence1}, we have  $\psi^{n+1}\in C_T(B^{s-1}_{p,r}(\mathcal{L}^p))$.

\textbf{Step two. Uniform bounds.}  \\
Next, we want to find some positive $T$ such that for which the approximate solutions are uniformly bounded. \\
By Lemma \ref{estimate2}, we obtain
\begin{align}\label{Bound1}
\|u^{n+1}\|_{\tilde{L}_{T}^{\infty}(B^{s}_{p,r})}\leq Ce^{CU^n(t)}\big(\|u_0\|_{B^{s}_{p,r}}+\|\psi^n\|_{\tilde{L}^{\infty}_{T}(B^{s-1}_{p,r})(\mathcal{L}^p)}\big),
\end{align}
with $U^n(t)=\int_{0}^{T}(\|u^n\|^2_{B^{s}_{p,r}}+1)dt$.
According to Proposition \ref{existence1}, we have
\begin{align}\label{Bound2}
\|\psi^{n+1}\|_{\tilde{L}_{T}^{\infty}(B^{s-1}_{p,r}(\mathcal{L}^p))}\leq Ce^{CU^n(t)}\|\psi_0\|_{B^{s-1}_{p,r}(\mathcal{L}^p)}.
\end{align}
Fix a $T>0$ such that $T\leq\frac {1} {C^2+81C^5(\|u_0\|_{B^{s}_{p,r}}+\|\psi_0\|_{B^{s-1}_{p,r}(\mathcal{L}^p)})^2}$. We claim that for any $n$ and $t\in[0,T]:$
\begin{align}\label{Bound3}
\|u^{n}\|_{\tilde{L}_{T}^{\infty}(B^{s}_{p,r})}\leq 9C^2(\|u_0\|_{B^{s}_{p,r}}+\|\psi_0\|_{B^{s-1}_{p,r}(\mathcal{L}^p)}), ~\|\psi^{n}\|_{\tilde{L}_{T}^{\infty}(B^{s-1}_{p,r}(\mathcal{L}^p))}\leq 3C\|\psi_0\|_{B^{s-1}_{p,r}(\mathcal{L}^p)}.
\end{align}

By induction, when $n=0$, \eqref{Bound3} holds true. Now suppose that we can prove \eqref{Bound3} holds true for $n$. Plugging \eqref{Bound3} into \eqref{Bound2}, then we have
\begin{align}\label{Bound4}
\|\psi^{n+1}\|_{\tilde{L}_{T}^{\infty}(B^{s-1}_{p,r}(\mathcal{L}^p))}\leq 3C\|\psi_0\|_{B^{s-1}_{p,r}(\mathcal{L}^p)}
\end{align}
Plugging \eqref{Bound3} into \eqref{Bound1}, then we get
$\|u^{n+1}\|_{\tilde{L}_{T}^{\infty}(B^{s}_{p,r})}\leq 9C^2(\|u_0\|_{B^{s}_{p,r}}+\|\psi_0\|_{B^{s-1}_{p,r}(\mathcal{L}^p)}).$

Therefore, there exists $C=C(K,s,p,r,d)>0$ such that
\begin{align}\label{Bound5}
\|(u^n,\psi^n)\|_{\tilde{L}_T^{\infty}(B^s_{p,r})\times\tilde{L}_T^{\infty}(B^{s-1}_{p,r}(\mathcal{L}^p))}\leq C,   ~~\forall~n.
\end{align}
\textbf{Step three. Convergence.}  \\
We are going to prove that $(u^n,\psi^n)$ is a Cauchy sequence in $\tilde{L}_T^{\infty}(B^{s-1}_{p,r})\times\tilde{L}_T^{\infty}(B^{s-2}_{p,r}(\mathcal{L}^p))$. According to \eqref{eq9}, we have
\begin{align}\label{eq10}
\left\{
\begin{array}{ll}
\partial_t (u^{n+1}-u^n)+(u^{n}\cdot\nabla)(u^{n+1}-u^n)-\frac {\epsilon} {Re}\Delta (u^{n+1}-u^n)+\nabla(P^{n+1}-P^n)=\frac {1-\epsilon} {Re}div~(\tau^{n}-\tau^{n-1})+F^n,  \\[1ex]
\partial_t(\psi^{n+1}-\psi^{n})+(u^{n}\cdot\nabla)(\psi^{n+1}-\psi^{n})=div_{R}[-\sigma(u^{n})\cdot{R}(\psi^{n+1}-\psi^{n})+\frac {1} {2}\psi_{\infty}\nabla_{R}\frac{(\psi^{n+1}-\psi^{n})}{\psi_{\infty}}+g^n]+f^n,  ~~~~~~~\\[1ex]
u^{n+1}-u^n|_{t=0}=\Delta_{n}u_0,~ \psi^{n+1}-\psi^n|_{t=0}=\Delta_{n}\psi_0, \\[1ex]
\end{array}
\right.
\end{align}
where $F^n=-(u^{n}-u^{n-1})\nabla u^n$, $g^n=-\sigma(u^{n}-u^{n-1})R \psi^n$, $f^n=-(u^{n}-u^{n-1})\nabla \psi^n$
and $\nabla (P^{n+1}-P^n)=\nabla(\Delta)^{-1}div(\frac {1-\epsilon} {Re}div~(\tau^{n}-\tau^{n-1})+F^n-(u^{n}\cdot\nabla)(u^{n+1}-u^n)).$ \\

By a similar calculation as in Lemma \ref{estimate2} and uniform bounds for $u^n$, we obtain
\begin{align*}
\|u^{n+1}-u^n\|_{\tilde{L}_{T}^{\infty}(B^{s-1}_{p,r})}&\leq C\big(\|\Delta_{n}u_0\|_{B^{s-1}_{p,r}}+
\|(u^{n}-u^{n-1})\nabla u^n\|_{L^{1}_{T}(B^{s-1}_{p,r})}+\|\psi^{n}-\psi^{n-1}\|_{\tilde{L}^{\infty}_{T}(B^{s-2}_{p,r})(\mathcal{L}^p)}\big)  \\
&\leq C\big(\|\Delta_{n}u_0\|_{B^{s-1}_{p,r}}+
T\|u^{n}-u^{n-1} \|_{\tilde{L}_{T}^{\infty}(B^{s-1}_{p,r})}+\|\psi^{n}-\psi^{n-1}\|_{\tilde{L}^{\infty}_{T}(B^{s-2}_{p,r})(\mathcal{L}^p)}\big).
\end{align*}
By Lemma \ref{estimate1}, Lemma \ref{product} with $s_1=s-2,\ s_2=s-1$ and uniform bounds for $u^n$ and $\psi^n$, we obtain
\begin{align*}
\|\psi^{n+1}-\psi^{n}\|_{\tilde{L}_{T}^{\infty}(B^{s-2}_{p,r}(\mathcal{L}^p))}&\leq C\big(\|\Delta_{n}\psi_0\|_{B^{s-2}_{p,r}(\mathcal{L}^p)}+(\int_{0}^{T} \|f^n\|^2_{\tilde{L}_{T}^{\infty}(B^{s-2}_{p,r})}+\|g^n \|^2_{\tilde{L}_{T}^{\infty}(B^{s-2}_{p,r})} dt)^{\frac 1 2 }\big)   \\
&\leq C\big(\|\Delta_{n}\psi_0\|_{B^{s-2}_{p,r}(\mathcal{L}^p)}+T^{\frac 1 2 } \|(u^{n}-u^{n-1})\|_{\tilde{L}_{T}^{\infty}(B^{s-1}_{p,r})} \big).
\end{align*}
By a direct calculation, we get
\begin{align*}
\|\Delta_{n}u_0\|_{B^{s-1}_{p,r}}\leq  (\sum_{|j-n|\leq 1}2^{jr(s-1)}\|\Delta_j\Delta_{n}u_0\|^r_{L^p})^{\frac 1 r} \leq C2^{-n}\|u_0\|_{B^{s}_{p,r}}.
\end{align*}
Similar, we have $\|\Delta_{n}\psi_0\|_{B^{s-2}_{p,r}(\mathcal{L}^p)}\leq C2^{-n}\|\psi_0\|_{B^{s-1}_{p,r}(\mathcal{L}^p)}.$
Define $A_n=\|u^{n+1}-u^n\|_{\tilde{L}_{T}^{\infty}(B^{s-1}_{p,r})}$ and $B_n=\|\psi^{n+1}-\psi^{n}\|_{\tilde{L}_{T}^{\infty}(B^{s-2}_{p,r}(\mathcal{L}^p))}$.
Then we deduce that
\begin{align}\label{conver1}
A_n\leq C\big(2^{-n}\|u_0\|_{B^{s}_{p,r}}+
TA_{n-1}+B_{n-1}\big)
\end{align}
and
\begin{align}\label{conver2}
B_n\leq C\big(2^{-n}\|\psi_0\|_{B^{s-1}_{p,r}(\mathcal{L}^p)}+T^{\frac 1 2 }A_{n-1}\big).
\end{align}
Plugging \eqref{conver2} into \eqref{conver1} and choosing a sufficiently small $T$, we obtain
\begin{align}\label{conver3}
A_n\leq C2^{-n}(\|u_0\|_{B^{s}_{p,r}}+\|\psi_0\|_{B^{s-1}_{p,r}(\mathcal{L}^p)})
+\frac 1 4 (A_{n-1}+A_{n-2}).
\end{align}
If $n<0$, we can set $A_n=0$. Then we deduce that
\begin{align}
\sum_{n=0}^{m}A_n\leq C\sum_{n=0}^{m}2^{-n}(\|u_0\|_{B^{s}_{p,r}}+\|\psi_0\|_{B^{s-1}_{p,r}(\mathcal{L}^p)})
+\frac 1 4 \sum_{n=0}^{m}(A_{n-1}+A_{n-2}),
\end{align}
and
\begin{align*}
\frac 1 4 \sum_{n=0}^{m}(A_{n-1}+A_{n-2})\leq\frac 1 2 \sum_{n=0}^{m}A_n.
\end{align*}
Then we have
\begin{align}
\sum_{n=0}^{m}A_n\leq C\sum_{n=0}^{m}2^{-n}(\|u_0\|_{B^{s}_{p,r}}+\|\psi_0\|_{B^{s-1}_{p,r}(\mathcal{L}^p)}),
\end{align}
which implies that $\sum_{n=0}^{\infty}A_n$ is convergent. We know $\sum_{n=0}^{m}B_n$ is also convergent by \eqref{conver2}. Hence, we deduce that $(u^n,\psi^n)$ is a Cauchy sequence in $\tilde{L}_T^{\infty}(B^{s-1}_{p,r})\times\tilde{L}_T^{\infty}(B^{s-2}_{p,r}(\mathcal{L}^p))$. Thus, there exists $(u,\psi)\in \tilde{L}_T^{\infty}(B^{s-1}_{p,r})\times\tilde{L}_T^{\infty}(B^{s-2}_{p,r}(\mathcal{L}^p))$ such that
$$u^n\rightarrow u~~in~~\tilde{L}_T^{\infty}(B^{s-1}_{p,r})~~and ~~\psi^n\rightarrow \psi~~in~~\tilde{L}_T^{\infty}(B^{s-2}_{p,r})(\mathcal{L}^p). $$

Uniform bounds for $u^n$ and $\psi^n$ and the Fatou property for Besov spaces ensure that $(u,\psi)\in \tilde{L}_T^{\infty}(B^{s}_{p,r})\times\tilde{L}_T^{\infty}(B^{s-1}_{p,r}(\mathcal{L}^p))$. Moreover, there exists $C=C(K,s,p,r,d)>0$ such that
\begin{align}\label{Bound6}
\|(u,\psi)\|_{\tilde{L}_T^{\infty}(B^s_{p,r})\times\tilde{L}_T^{\infty}(B^{s-1}_{p,r}(\mathcal{L}^p))}\leq C,   ~~\forall~\nu\in[0,1].
\end{align}
An interpolation argument ensures that
$$u^n\rightarrow u~~in~~\tilde{L}_T^{\infty}(B^{s'}_{p,r})~~and ~~\psi^n\rightarrow \psi~~in~~\tilde{L}_T^{\infty}(B^{s'-1}_{p,r})(\mathcal{L}^p). $$
Passing to the limit in \eqref{eq9} in the weak sense, we can prove that $(u,\psi)$ is a solution of \eqref{eq1}.
By a similar calculation as in Lemma\ref{estimate2} and Lemma\ref{estimate1}, we deduce that $(u,\psi)\in C_{T}(B^{s}_{p,r})\times C_{T}(B^{s-1}_{p,r}(\mathcal{L}^p))$.
Moreover, if $(u_0,\psi_0)\in B^{\gamma}_{p,r}\times B^{\gamma-1}_{p,r}(\mathcal{L}^p)$ for any $\gamma>s$, similar to step one, then there exists $C_1=C_1(K,\gamma,s,p,r,d)>0$ such that
\begin{align}\label{Bound7}
\|H^{\nu}(u_0,\psi_0)\|_{\tilde{L}_T^{\infty}(B^{\gamma}_{p,r})\times\tilde{L}_T^{\infty}(B^{\gamma-1}_{p,r}(\mathcal{L}^p))}\leq C_1(\|u_{0}\|_{B^{\gamma}_{p,r}}+\|\psi_{0}\|_{B^{\gamma-1}_{p,r}(\mathcal{L}^p)}), \quad \forall\nu\in[0,1].
\end{align}

\textbf{Step four. Uniqueness.}  \\

Let $(u,\psi)$ and $(v,\phi)$ be two solutions of \eqref{eq1} with the same initial data. Then we get
\begin{align}\label{eq11}
\left\{
\begin{array}{ll}
\partial_t (u-v)+(u\cdot\nabla)(u-v)-\frac {\epsilon} {Re}\Delta (u-v)+\nabla(P_1-P_2)=\frac {1-\epsilon} {Re}div~(\tau_1-\tau_2)+F,  \\[1ex]
\partial_t(\psi-\phi)+(u\cdot\nabla)(\psi-\phi)=div_{R}[-\sigma(u)\cdot{R}(\psi-\phi)+\frac {1} {2}\psi_{\infty}\nabla_{R}\frac{(\psi-\phi)}{\psi_{\infty}}+g]+f,  ~~~~~~~\\[1ex]
u-v|_{t=0}=0,~ \psi-\phi|_{t=0}=0, \\[1ex]
\end{array}
\right.
\end{align}
where $F=-(u-v)\nabla v$, $g=-\sigma(u-v)R \phi$, $f=-(u-v)\nabla \phi$ and $\nabla (P_1-P_2)=\nabla(\Delta)^{-1}div(\frac {1-\epsilon} {Re}div~(\tau_1-\tau_2)+F-(u\cdot\nabla)(u-v)).$ \\
By a similar calculation as in Step three and uniform bounds for $u$ and $v$, we obtain
\begin{align}\label{Unique1}
\|u-v\|_{\tilde{L}_{T}^{\infty}(B^{s-1}_{p,r})}&\leq C\big(
\|(u-v)\nabla v\|_{L^{1}_{T}(B^{s-1}_{p,r})}+\|\psi-\phi\|_{\tilde{L}^{\infty}_{T}(B^{s-2}_{p,r})(\mathcal{L}^p)}\big)  \notag \\
&\leq C\big(
T\|u-v \|_{\tilde{L}_{T}^{\infty}(B^{s-1}_{p,r})}+\|\psi-\phi\|_{\tilde{L}^{\infty}_{T}(B^{s-2}_{p,r})(\mathcal{L}^p)}\big).
\end{align}
By Lemma \ref{estimate1}, Lemma \ref{product} with $s_1=s-2,\ s_2=s-1$ and uniform bounds for $u$, $v$, $\psi$ and $\phi$, we have
\begin{align}\label{Unique2}
\|\psi-\phi\|_{\tilde{L}_{T}^{\infty}(B^{s-2}_{p,r}(\mathcal{L}^p))}\leq C\big(T^{\frac 1 2 } \|u-v \|_{\tilde{L}_{T}^{\infty}(B^{s-1}_{p,r})} \big).
\end{align}
Plugging \eqref{Unique2} into \eqref{Unique1} and choosing $T_1$ is small enough, $\forall t\in[0,T_1]$, we can easily deduct that $u=v$ for $a.e.(t,x)$.
From \eqref{Unique2}, we get $\psi=\phi$ for $a.e.(t,x,R)$. By a standard connectivity argument, we can prove the uniqueness on $[0,T]$.
\end{proof}

\section{Inviscid limit}
\par

\textbf{The proof of Theorem \ref{th1}~:}
We have proved uniform bounds for solution $(u,\psi)$ of \eqref{eq1} in Proposition \ref{Local}. Then we can divide it into two steps to prove Theorem \ref{th1}.

\textbf{Step one. Uniform continuous dependence for $\nu$.}
We are going to show that solution map of \eqref{eq1} is uniform continuous with respect to $\nu\in[0,1]$. If $\nu=0$, since the equation \eqref{eq1} is linear and can be handled more simply, we here omit the details. If $\nu>0$, for any $(u_0,\psi_0),(v_0,\phi_0)\in B_R$, we assume that $(u,\psi),(v,\phi)$ are corresponding solutions of \eqref{eq1}, respectively. Let $\omega=u-v$ and $\Omega=\psi-\phi$. Then we get
\begin{align}\label{eq12}
\left\{
\begin{array}{ll}
\partial_t \omega+u\cdot\nabla\omega-\frac {\epsilon} {Re}\Delta \omega+\nabla(P(u)-P(v))=\frac {1-\epsilon} {Re}div~\tau(\Omega)-\omega\nabla v,  \\[1ex]
\partial_t\Omega+u\cdot\nabla\Omega=div_{R}[-\sigma(u)\cdot{R}\Omega+\frac {1} {2}\psi_{\infty}\nabla_{R}\frac{\Omega}{\psi_{\infty}}-\sigma(\omega)R \phi]-\omega\nabla \phi,  ~~~~~~~\\[1ex]
\omega|_{t=0}=u_0-v_0,~ \Omega|_{t=0}=\psi_0-\phi_0, \\[1ex]
\end{array}
\right.
\end{align}
with $\nabla (P(u)-P(v))=\nabla(\Delta)^{-1}div(\frac {1-\epsilon} {Re}div~\tau(\Omega)-\omega\nabla v-u\cdot\nabla\omega).$ \\
By a similar calculation as in Lemma \ref{estimate2}, we obtain
\begin{align*}
\|\omega\|_{\tilde{L}_{T}^{\infty}(B^{s-1}_{p,r})}&\leq C\big(\|\omega(0)\|_{B^{s-1}_{p,r}}+\int_{0}^{T}
\|\omega\|_{B^{s-1}_{p,r}}(\|u\|_{B^{s}_{p,r}}+\|v\|_{B^{s}_{p,r}})dt+\|\Omega\|_{\tilde{L}^{\infty}_{T}(B^{s-2}_{p,r})(\mathcal{L}^p)}\big)  \\
&\leq C\big(\|\omega(0)\|_{B^{s-1}_{p,r}}+\|\Omega\|_{\tilde{L}^{\infty}_{T}(B^{s-2}_{p,r})(\mathcal{L}^p)}\big).
\end{align*}
Applying Proposition \ref{Local} and Gronwall's inequality, we get
\begin{align*}
\|\omega\|_{\tilde{L}_{T}^{\infty}(B^{s-1}_{p,r})}\leq C\big(\|\omega(0)\|_{B^{s-1}_{p,r}}+\|\Omega\|_{\tilde{L}^{\infty}_{T}(B^{s-2}_{p,r})(\mathcal{L}^p)}\big).
\end{align*}
By Lemma \ref{estimate1}, Lemma \ref{product} with $s_1=s-2,\ s_2=s-1$ and uniform bounds for $u$, $v$, $\psi$ and $\phi$, we have
\begin{align*}
\|\Omega\|_{\tilde{L}_{T}^{\infty}(B^{s-2}_{p,r}(\mathcal{L}^p))}\leq C\big(\|\Omega(0)\|_{B^{s-2}_{p,r}(\mathcal{L}^p)}+T^{\frac 1 2 } \|\omega \|_{\tilde{L}_{T}^{\infty}(B^{s-1}_{p,r})} \big).
\end{align*}
When $T$ is small enough, we can easily deduce that
\begin{align}\label{Ucd1}
\|\omega\|_{\tilde{L}_{T}^{\infty}(B^{s-1}_{p,r})}+\|\Omega\|_{\tilde{L}_{T}^{\infty}(B^{s-2}_{p,r}(\mathcal{L}^p))}\leq C(\|\omega(0)\|_{B^{s-1}_{p,r}}+\|\Omega(0)\|_{B^{s-2}_{p,r}(\mathcal{L}^p)}).
\end{align}

Assume that $(u,\psi),~(u_N,\psi_N)$ are corresponding solutions of \eqref{eq1} with initial data $(u_0,\psi_0),(S_N u_0,S_N \psi_0)$, respectively. Let $\omega_N=u-u_N$ and $\Omega_N=\psi-\psi_N$, then we obtain
\begin{align}\label{eq13}
\left\{
\begin{array}{ll}
\partial_t \omega_N+u\cdot\nabla\omega_N-\frac {\epsilon} {Re}\Delta \omega_N+\nabla(P(u)-P(u_N))=\frac {1-\epsilon} {Re}div~\tau(\Omega_N)-\omega_N\nabla u_N,  \\[1ex]
\partial_t\Omega_N+u\cdot\nabla\Omega_N=div_{R}[-\sigma(u)\cdot{R}\Omega_N+\frac {1} {2}\psi_{\infty}\nabla_{R}\frac{\Omega_N}{\psi_{\infty}}-\sigma(\omega_N)R \psi_N]-\omega_N\nabla \psi_N,  ~~~~~~~\\[1ex]
\omega_N|_{t=0}=u_0-S_N u_0,~ \Omega_N|_{t=0}=\psi_0-S_N\psi_0. \\[1ex]
\end{array}
\right.
\end{align}

Lemma \ref{u}, Lemma \ref{P}  and Lemma \ref{Lemma3} ensure that
\begin{align}\label{Ucd2}
\|\omega_N\|_{\tilde{L}_{T}^{\infty}(B^{s}_{p,r})}&\leq C\big(\|\omega_N(0)\|_{B^{s}_{p,r}}+\int_{0}^{T}
\|\omega_N\|_{B^{s}_{p,r}}(\|u\|_{B^{s}_{p,r}}+\|u_N\|_{B^{s}_{p,r}})+\|\omega_N\nabla u_N\|_{B^{s}_{p,r}}dt+\|\Omega_N\|_{\tilde{L}^{\infty}_{T}(B^{s-1}_{p,r})(\mathcal{L}^p)}\big).
\end{align}
Applying Lemma \ref{product}, we have
\begin{align}\label{Ucd3}
\|\omega_N\nabla u_N\|_{B^{s}_{p,r}}dt&\leq C(\|\omega_N\|_{L^{\infty}}\|\nabla u_N\|_{B^{s}_{p,r}}+\|\omega_N\|_{B^{s}_{p,r}}\|\nabla u_N\|_{L^{\infty}})  \notag \\
&\leq C(\|\omega_N\|_{B^{s-1}_{p,r}}\|u_N\|_{B^{s+1}_{p,r}}+\|\omega_N\|_{B^{s}_{p,r}}\|u_N\|_{B^{s}_{p,r}}).
\end{align}
Plugging \eqref{Ucd3} into \eqref{Ucd2} and applying Gronwall's inequality, we obtain
\begin{align}\label{Ucd4}
\|\omega_N\|_{\tilde{L}_{T}^{\infty}(B^{s}_{p,r})}&\leq C\big(\|\omega_N(0)\|_{B^{s}_{p,r}}+\int_{0}^{T}
\|\omega_N\|_{B^{s-1}_{p,r}}\|u_N\|_{B^{s+1}_{p,r}}dt+\|\Omega_N\|_{\tilde{L}^{\infty}_{T}(B^{s-1}_{p,r})(\mathcal{L}^p)}\big) .
\end{align}
By Proposition \ref{Local} and \eqref{Ucd1}, we get
\begin{align}\label{Ucd5}
\|u_N\|_{\tilde{L}_{T}^{\infty}(B^{s+1}_{p,r})}\leq C(\|u_N(0)\|_{B^{s+1}_{p,r}}+\|\psi_N(0)\|_{B^{s}_{p,r}(\mathcal{L}^p)})\leq C2^N
\end{align}
and
\begin{align}\label{Ucd6}
2^N\|\omega_N\|_{\tilde{L}_{T}^{\infty}(B^{s-1}_{p,r})}\leq C2^N\|\omega_N(0)\|_{B^{s-1}_{p,r}}\leq C\|\omega_N(0)\|_{B^{s}_{p,r}}.
\end{align}
By \eqref{Ucd4}, \eqref{Ucd5} and \eqref{Ucd6}, we have
\begin{align}\label{Ucd7}
\|\omega_N\|_{\tilde{L}_{T}^{\infty}(B^{s}_{p,r})}
\leq C\big(\|\omega_N(0)\|_{B^{s}_{p,r}}+\|\Omega_N\|_{\tilde{L}^{\infty}_{T}(B^{s-1}_{p,r})(\mathcal{L}^p)}\big)  .
\end{align}
By Proposition \ref{Local}, we get
\begin{align}\label{Ucd8}
\|\psi_N\|_{\tilde{L}_{T}^{\infty}(B^{s+1}_{p,r}(\mathcal{L}^p))}\leq C(\|u_N(0)\|_{B^{s+1}_{p,r}}+\|\psi_N(0)\|_{B^{s}_{p,r}(\mathcal{L}^p)})\leq C2^N.
\end{align}
By Lemma \ref{estimate1}, (1) of Lemma \ref{product}, Proposition \ref{Local} and \eqref{Ucd6}, \eqref{Ucd8}, we have
\begin{align}\label{Ucd9}
\|\Omega_N\|_{\tilde{L}_{T}^{\infty}(B^{s-1}_{p,r}(\mathcal{L}^p))}&\leq C\big(\|\Omega_N(0)\|_{B^{s-1}_{p,r}(\mathcal{L}^p)}+T^{\frac 1 2 }( \|\omega_N\nabla \psi_N \|_{\tilde{L}_{T}^{\infty}(B^{s-1}_{p,r})(\mathcal{L}^p)}+  \|\sigma(\omega_N)R \psi_N\|_{\tilde{L}_{T}^{\infty}(B^{s-1}_{p,r})(\mathcal{L}^p)})\big)  \notag \\
&\leq C\big(\|\Omega_N(0)\|_{B^{s-1}_{p,r}(\mathcal{L}^p)}+\|\omega_N(0)\|_{B^{s}_{p,r}}+T^{\frac 1 2 }\|\sigma(\omega_N)R \psi_N\|_{\tilde{L}_{T}^{\infty}(B^{s-1}_{p,r})(\mathcal{L}^p)}\big)  \notag\\
&\leq C\big(\|\Omega_N(0)\|_{B^{s-1}_{p,r}(\mathcal{L}^p)}+\|\omega_N(0)\|_{B^{s}_{p,r}}+T^{\frac 1 2 }\|\omega_N\|_{\tilde{L}_{T}^{\infty}(B^{s}_{p,r})}\big).
\end{align}

Plugging \eqref{Ucd9} into \eqref{Ucd7}, if $T$ is small enough, we can easily deduce that
\begin{align}\label{Ucd10}
\|\omega_N\|_{\tilde{L}_{T}^{\infty}(B^{s}_{p,r})}+\|\Omega_N\|_{\tilde{L}_{T}^{\infty}(B^{s-1}_{p,r}(\mathcal{L}^p))}\leq C(\|\omega_N(0)\|_{B^{s}_{p,r}}+\|\Omega_N(0)\|_{B^{s-1}_{p,r}(\mathcal{L}^p)}).
\end{align}
Denote $X=\tilde{L}_T^{\infty}(B^{s}_{p,r})\times\tilde{L}_T^{\infty}(B^{s-1}_{p,r}(\mathcal{L}^p))$. By Proposition \ref{Local} and \eqref{Ucd1}, \eqref{Ucd10}, we have
\begin{align*}
&\|H^{\nu}(u_0,\psi_0)-H^{\nu}(v_0,\phi_0)\|_{X}\leq \|H^{\nu}(u_0,\psi_0)-H^{\nu}(S_N u_0,S_N \psi_0)\|_{X}  \\
&+\|H^{\nu}(v_0,\phi_0)-H^{\nu}(S_N v_0,S_N \phi_0)\|_{X}
+\|H^{\nu}(S_N u_0,S_N \psi_0)-H^{\nu}(S_N v_0,S_N \phi_0)\|_{X}  \\
&\leq C(\|u_0-S_N u_0\|_{B^{s}_{p,r}}+\|\psi_0-S_N \psi_0\|_{B^{s-1}_{p,r}(\mathcal{L}^p)}+\|v_0-S_N v_0\|_{B^{s}_{p,r}}+\|\phi_0-S_N \phi_0\|_{B^{s-1}_{p,r}(\mathcal{L}^p)}  \\
&+2^{\frac N 2}\|H^{\nu}(S_N u_0,S_N \psi_0)-H^{\nu}(S_N v_0,S_N \phi_0)\|^{\frac 1 2}_{\tilde{L}_T^{\infty}(B^{s-1}_{p,r})\times\tilde{L}_T^{\infty}(B^{s-2}_{p,r}(\mathcal{L}^p))})  \\
&\leq C(\|u_0-S_N u_0\|_{B^{s}_{p,r}}+\|\psi_0-S_N \psi_0\|_{B^{s-1}_{p,r}(\mathcal{L}^p)}+\|v_0-S_N v_0\|_{B^{s}_{p,r}}+\|\phi_0-S_N \phi_0\|_{B^{s-1}_{p,r}(\mathcal{L}^p)}  \\
&+(\|u_0-v_0\|_{B^{s}_{p,r}}+\|\psi_0-\phi_0\|_{B^{s-1}_{p,r}(\mathcal{L}^p)})^{\frac 1 2}).
\end{align*}
Thus, we complete the proof of uniform continuous dependence.

\textbf{Step two. Inviscid limit.}
Assume that $(u^\nu,\psi^\nu),(u^\nu_N,\psi^\nu_N)$ are corresponding solutions of \eqref{eq1} with initial data $(u_0,\psi_0),(S_N u_0,S_N \psi_0)$ for $\nu\in[0,1]$, respectively.  \\
Letting $\omega^\nu_N=u^\nu_N-u^0_N$ and $\Omega^\nu_N=\psi_N^\nu-\psi^0_N$ for $\nu>0$, then we have
\begin{align}\label{eq13}
\left\{
\begin{array}{ll}
\partial_t \omega^\nu_N+u^\nu_N\cdot\nabla\omega^\nu_N+\nabla(P(u^\nu_N)-P(u^0_N))=\frac {\epsilon} {Re}\Delta u^\nu_N+\frac {1-\epsilon} {Re}div~\tau(\psi^\nu_N)-\omega^\nu_N\nabla u^0_N,  \\[1ex]
\partial_t\Omega^\nu_N+u^\nu_N\cdot\nabla\Omega^\nu_N=div_{R}[-\sigma(u^\nu_N)\cdot{R}\Omega^\nu_N+\frac {1} {2}\psi_{\infty}\nabla_{R}\frac{\Omega^\nu_N}{\psi_{\infty}}-\sigma(\omega^\nu_N)R \psi^0_N]-\omega^\nu_N\nabla \psi^0_N,  ~~~~~~~\\[1ex]
\omega^\nu_N|_{t=0}=0,~ \Omega^\nu_N|_{t=0}=0. \\[1ex]
\end{array}
\right.
\end{align}

Lemma \ref{u} , Lemma \ref{P}  and Lemma \ref{Lemma3} ensure that
\begin{align*}
\|\omega^\nu_N\|_{\tilde{L}_{T}^{\infty}(B^{s-1}_{p,r})}\leq C\big(\int_{0}^{T}
\|\omega^\nu_N\|_{B^{s-1}_{p,r}}(\|u^0_N\|_{B^{s}_{p,r}}+\|u^\nu_N\|_{B^{s}_{p,r}})dt+C\nu2^N \big) ,
\end{align*}
which implies
\begin{align}\label{In1}
\|\omega^\nu_N\|_{\tilde{L}_{T}^{\infty}(B^{s-1}_{p,r})}\leq C\nu2^N .
\end{align}
By Lemma \ref{estimate1}, Lemma \ref{product} with $s_1=s-2,\ s_2=s-1$ and uniform bounds for $\psi^0_N$, we have
\begin{align}\label{In2}
\|\Omega^\nu_N\|_{\tilde{L}_{T}^{\infty}(B^{s-2}_{p,r}(\mathcal{L}^p))}&\leq CT^{\frac 1 2 }( \|\omega^\nu_N\nabla \psi^0_N \|_{\tilde{L}_{T}^{\infty}(B^{s-2}_{p,r})(\mathcal{L}^p)}+  \|\sigma(\omega^\nu_N)R \psi^0_N\|_{\tilde{L}_{T}^{\infty}(B^{s-2}_{p,r})(\mathcal{L}^p)})  \notag\\
&\leq C\|\omega^\nu_N\|_{\tilde{L}_{T}^{\infty}(B^{s-1}_{p,r})}\leq C\nu2^N .
\end{align}
Similar, we obtain
\begin{align*}
\|\omega^\nu_N\|_{\tilde{L}_{T}^{\infty}(B^{s}_{p,r})}&\leq C\int_{0}^{T}
\|\omega^\nu_N\|_{B^{s}_{p,r}}(\|u^\nu_N\|_{B^{s}_{p,r}}+\|u^0_N\|_{B^{s}_{p,r}})+\|\omega^\nu_N\nabla u^0_N\|_{B^{s}_{p,r}}dt+C\nu2^{2N} \\
&\leq C\int_{0}^{T}
\|\omega^\nu_N\|_{B^{s}_{p,r}}(\|u^\nu_N\|_{B^{s}_{p,r}}+\|u^0_N\|_{B^{s}_{p,r}})+\|\omega^\nu_N\|_{B^{s-1}_{p,r}}\|u^0_N\|_{B^{s+1}_{p,r}}dt+C\nu2^{2N} .
\end{align*}
Using \eqref{In1}, Proposition \ref{Local} and applying Gronwall's inequality, we deduce that
\begin{align}\label{In3}
\|\omega^\nu_N\|_{\tilde{L}_{T}^{\infty}(B^{s}_{p,r})}\leq C\nu2^{2N}.
\end{align}
By Lemma \ref{estimate1}, Lemma \ref{product}, \eqref{In3} and estimates for $\psi^0_N$, we have
\begin{align}\label{In4}
\|\Omega^\nu_N\|_{\tilde{L}_{T}^{\infty}(B^{s-1}_{p,r}(\mathcal{L}^p))}&\leq CT^{\frac 1 2 }( \|\omega^\nu_N\nabla \psi^0_N \|_{\tilde{L}_{T}^{\infty}(B^{s-1}_{p,r})(\mathcal{L}^p)}+  \|\sigma(\omega^\nu_N)R \psi^0_N\|_{\tilde{L}_{T}^{\infty}(B^{s-1}_{p,r})(\mathcal{L}^p)})  \notag\\
&\leq C(\|\omega^\nu_N\|_{\tilde{L}_{T}^{\infty}(B^{s-1}_{p,r})}\|\psi^0_N \|_{\tilde{L}_{T}^{\infty}(B^{s}_{p,r})(\mathcal{L}^p)}+\|\omega^\nu_N\|_{\tilde{L}_{T}^{\infty}(B^{s}_{p,r})}\|\psi^0_N \|_{\tilde{L}_{T}^{\infty}(B^{s-1}_{p,r})(\mathcal{L}^p)})  \notag \\
&\leq C\nu2^{2N}.
\end{align}

Let $X=\tilde{L}_T^{\infty}(B^{s}_{p,r})\times\tilde{L}_T^{\infty}(B^{s-1}_{p,r}(\mathcal{L}^p))$. By \eqref{Ucd10}, \eqref{In3} and \eqref{In4},  we have
\begin{align*}
&\|H^{\nu}(u_0,\psi_0)-H^{0}(u_0,\psi_0)\|_{X}\leq \|H^{\nu}(u_0,\psi_0)-H^{\nu}(S_N u_0,S_N \psi_0)\|_{X}  \\
&+\|H^{0}(u_0,\psi_0)-H^{0}(S_N u_0,S_N \psi_0)\|_{X}
+\|H^{\nu}(S_N u_0,S_N \psi_0)-H^{0}(S_N u_0,S_N \psi_0)\|_{X}  \\
&\leq C(\|u_0-S_N u_0\|_{B^{s}_{p,r}}+\|\psi_0-S_N \psi_0\|_{B^{s-1}_{p,r}(\mathcal{L}^p)}+\nu2^{2N}).
\end{align*}

Then we deduce that
\begin{align}
\lim_{\nu\rightarrow 0}
\|H^{\nu}(u_0,\psi_0)-H^{0}(u_0,\psi_0)\|_{\tilde{L}_T^{\infty}(B^{s}_{p,r})\times\tilde{L}_T^{\infty}(B^{s-1}_{p,r}(\mathcal{L}^p))}=0.
\end{align}
Combining Proposition \ref{Local}, we complete the proof of Theorem \ref{th1}.     \hfill$\Box$

\section{The rate of convergence in $L^p$}
\par

\textbf{The proof of Theorem \ref{th2}~:}
Assume that $(u^\nu,\psi^\nu)$ is the corresponding solution of \eqref{eq1} with the same initial data $(u_0,\psi_0)$ for $\nu\in[0,1]$.  \\
Letting $\omega^\nu=u^\nu-u^0$ and $\Omega^\nu=\psi^\nu-\psi^0$ for $\nu>0$, then we have
\begin{align}\label{eq14}
\left\{
\begin{array}{ll}
\partial_t \omega^\nu+u^\nu\cdot\nabla\omega^\nu+\nabla(P(u^\nu)-P(u^0))=\frac {\epsilon} {Re}\Delta u^\nu+\frac {1-\epsilon} {Re}div~\tau(\psi^\nu)-\omega^\nu\nabla u^0,  \\[1ex]
\partial_t\Omega^\nu+u^\nu\cdot\nabla\Omega^\nu=div_{R}[-\sigma(u^\nu)\cdot{R}\Omega^\nu+\frac {1} {2}\psi_{\infty}\nabla_{R}\frac{\Omega^\nu}{\psi_{\infty}}-\sigma(\omega^\nu)R \psi^0]-\omega^\nu\nabla \psi^0,  ~~~~~~~\\[1ex]
\omega^\nu|_{t=0}=0,~ \Omega^\nu|_{t=0}=0. \\[1ex]
\end{array}
\right.
\end{align}
Under the assumption of Theorem \ref{th1}, we have $s>1+max\{\frac 1 2, \frac d p\}$, $2\leq p\leq r<\infty$ and $k(p-1)>1$.
Similarly, we get
\begin{align*}
\|\omega^\nu\|_{L_{T}^{\infty}(B^{s-2}_{p,r})}&\leq C\big(\int_{0}^{T}
\|\omega^\nu\|_{B^{s-2}_{p,r}}(\|u^0_N\|_{B^{s}_{p,r}}+\|u^\nu\|_{B^{s}_{p,r}})+\|\omega^\nu\nabla u^0\|_{B^{s-2}_{p,r}}dt+C\nu\big) \\
&\leq C\big(\int_{0}^{T}
\|\omega^\nu\|_{B^{s-2}_{p,r}}dt+C\nu\big).
\end{align*}
Applying Gronwall's inequality, then we obtain
\begin{align}\label{Rate1}
\|\omega^\nu\|_{L_{T}^{\infty}(B^{s-2}_{p,r})}\leq C\nu.
\end{align}
An interpolation argument ensures that
\begin{align}\label{Rate2}
\|\omega^\nu\|_{L_{T}^{\infty}(B^{s-1}_{p,r})}\leq \|\omega^\nu\|^{\frac 1 2}_{L_{T}^{\infty}(B^{s-2}_{p,r})}\|\omega^\nu\|^{\frac 1 2}_{L_{T}^{\infty}(B^{s}_{p,r})}\leq C\nu^{\frac 1 2}.
\end{align}
By Lemma \ref{estimate1}, Lemma \ref{product} and uniform bounds for $\psi^0_N$, we have
\begin{align}\label{Rate3}
\|\Omega^\nu\|_{L_{T}^{\infty}(B^{s-2}_{p,r}(\mathcal{L}^p))}&\leq CT^{\frac 1 2 }( \|\omega^\nu\nabla \psi^0 \|_{L_{T}^{\infty}(B^{s-2}_{p,r})(\mathcal{L}^p)}+  \|\sigma(\omega^\nu_N)R \psi^0\|_{L_{T}^{\infty}(B^{s-2}_{p,r})(\mathcal{L}^p)})  \notag\\
&\leq C\|\omega^\nu_N\|_{L_{T}^{\infty}(B^{s-1}_{p,r})}\leq C\nu^{\frac 1 2} .
\end{align}
Under the assumption $1+max\{\frac 1 2, \frac d p\}<s$, we now prove the rate of convergence in $L^p$.
If $s>2$, we deduce that
$$\|\omega^\nu(t)\|_{L^p}\leq C\|\omega^\nu\|_{L_{T}^{\infty}(B^{s-2}_{p,r})}\leq C\nu,~~\|\Omega^\nu(t)\|_{L_x^p(\mathcal{L}^p)}\leq C\|\Omega^\nu\|_{L_{T}^{\infty}(B^{s-2}_{p,r}(\mathcal{L}^p))}\leq C\nu^{\frac 1 2}.$$
If $s=2$, for any small $\epsilon_1\in(0,1)$, we have
$$\|\omega^\nu\|_{L^p}\leq C\|\omega^\nu\|_{L_{T}^{\infty}(B^{2\epsilon_1}_{p,r})}\leq C\|\omega^\nu\|^{1-\epsilon_1}_{L_{T}^{\infty}(B^{s-2}_{p,r})}\|\omega^\nu\|^{\epsilon_1}_{L_{T}^{\infty}(B^{s}_{p,r})}\leq C\nu^{1-\epsilon_1}.$$
and
$$\|\Omega^\nu\|_{L_x^p(\mathcal{L}^p)}\leq C\|\Omega^\nu\|_{L_{T}^{\infty}(B^{2\epsilon_1}_{p,r}(\mathcal{L}^p))}\leq C\|\Omega^\nu\|^{1-\epsilon_1}_{L_{T}^{\infty}(B^{s-2}_{p,r}(\mathcal{L}^p))}\|\Omega^\nu\|^{\epsilon_1}_{L_{T}^{\infty}(B^{s-1}_{p,r}(\mathcal{L}^p))}\leq C\nu^{\frac {1-\epsilon_1} {2}}.$$
If $s<2$, we get
$$\|\omega^\nu\|_{L^p}\leq \|\omega^\nu\|^{\frac s 2}_{L_{T}^{\infty}(B^{s-2}_{p,r})}\|\omega^\nu\|^{1-\frac s 2}_{L_{T}^{\infty}(B^{s}_{p,r})}\leq C\nu^{\frac s 2}.$$
and
$$\|\Omega^\nu\|_{L_x^p(\mathcal{L}^p)}\leq C\|\Omega^\nu\|^{s-1}_{L_{T}^{\infty}(B^{s-2}_{p,r}(\mathcal{L}^p))}\|\Omega^\nu\|^{2-s}_{L_{T}^{\infty}(B^{s-1}_{p,r}(\mathcal{L}^p))}\leq C\nu^{\frac {s-1} {2}}.$$

We thus complete the proof of Theorem \ref{th2}.     \hfill$\Box$

\smallskip
\noindent\textbf{Acknowledgments} This work was
partially supported by the National Natural Science Foundation of China (No. 11671407 and No. 11701586), the Macao Science and Technology Development Fund (No. 0091/2018/A3), and Guangdong Province of China Special Support Program (No. 8-2015),
and the key project of the Natural Science Foundation of Guangdong province (No. 2016A030311004).


\phantomsection
\addcontentsline{toc}{section}{\refname}
\bibliographystyle{abbrv} 
\bibliography{Feneref}

\end{document}